\def\dis{\displaystyle}
\newtheorem{thm}{Theorem}[section]
\newtheorem{cor}[thm]{Corollary}
\newtheorem{prop}[thm]{Proposition}
\newtheorem{lem}[thm]{Lemma}
\theoremstyle{definition}
\newtheorem{defn}[thm]{Definition}
\theoremstyle{remark}
\newtheorem{rem}[thm]{Remark}
\numberwithin{equation}{section}
\begin{document}

\date{}
\date{}
\title{Mean curvature flow with driving force on fixed extreme points}

\author{Longjie ZHANG}

\date{December, 2015\\Corresponding author University:Graduate School of Mathematical Sciences, The University of Tokyo. Address:3-8-1 Komaba Meguro-ku Tokyo 153-8914, Japan. Email:zhanglj@ms.u-tokyo.ac.jp, zhanglj919@gmail.com}

\maketitle

\begin{minipage}{140mm}

{{\bf Abstract:} In this paper, we consider the mean curvature flow with driving force on fixed extreme points in the plane. We give a general local existence and uniqueness result of this problem with $C^2$ initial curve. For a special family of initial curves, we classify the solutions into three  categories. Moreover, in each category, the asymptotic behavior is given.

{\bf Keywords and phrases:} mean curvature flow, driving force, fixed extreme points}

{\bf 2010MSC:} 35A01, 35A02, 35K55, 53C44.

\end{minipage}

$$$$

\section{Introduction}\large

In this paper, we consider the mean curvature with driving force on fixed extreme points given by
\begin{equation}\label{eq:meancur}
V=-\kappa+A,\ \text{on}\ \Gamma(t),
\end{equation}
\begin{equation}\label{eq:initial}
\Gamma(0)=\Gamma_0.
\end{equation}
Here $V$ denotes the upward normal velocity(the definition of ``upward'' is given by Remark \ref{rem:direction}). The sign $\kappa$ is chosen such that the problem is parabolic. $A$ is a positive constant.

If we use the arc length parameter $s$ to represent $\Gamma(t)$ by $\Gamma(t)=\{F(s,t)\in\mathbb{R}^2\mid 0\leq s\leq L(t)\}$, the equation (\ref{eq:meancur}), (\ref{eq:initial}) can be written as
\begin{equation}\label{eq:meancurpara}
\frac{d}{d t}F(s,t)=\kappa N-AN,\ 0<s<L(t),
\end{equation}
\begin{equation}\label{eq:initialpara}
F(s,0)=F_0(s),\ 0\leq s\leq L_0.
\end{equation}
Here $\Gamma_0=\{F_0(s)\in\mathbb{R}^2\mid 0\leq s\leq L_0\}$; $N$ denotes the unit downward normal vector(the definition of ``downward'' is given by Remark \ref{rem:direction}) and $L(t)$ denotes the length of $\Gamma(t)$. And the notation $\frac{d}{d t}F(s,t)$ denotes the derivative of $t$ by fixing $s$. Noting the assumption that the sign of $\kappa$ is chosen such that the problem (\ref{eq:meancur}) is parabolic, combining Frenet formulas, there holds
$$
\kappa N=\frac{\partial^2}{\partial s^2}F(s,t).
$$ 
Here we give the fixed extreme point boundary condition.
\begin{equation}\label{eq:fixbound}
F(0,t)=P,\ F(L(t),t)=Q,
\end{equation}
where $P$, $Q$ are two different fixed points in $\mathbb{R}^2$.

{\bf Main results} Here we give our main theorems.

\begin{thm}\label{thm:exist}
Assume that $F_0(s)\in C^2([0,L_0]\rightarrow\mathbb{R}^2)$. Then there exist $T>0$ and unique $F(s,t)$ such that $F(s,t)$ satisfies (\ref{eq:meancurpara}), (\ref{eq:fixbound}), for $0<t<T$ and initial condition (\ref{eq:initialpara}).
\end{thm}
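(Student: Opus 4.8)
The plan is to rewrite the geometric law $V=-\kappa+A$ with $\Gamma(0)=\Gamma_0$ and fixed ends $P,Q$ as a scalar quasilinear parabolic equation \emph{with homogeneous initial and boundary data}, and then to solve that equation by a Banach fixed point argument. Since $F_0\in C^2$ is a regular curve, $\Gamma_0$ has a tubular neighbourhood: writing $N_0(\sigma)$ for its downward unit normal and $\kappa_0(\sigma)$ for its curvature at $F_0(\sigma)$, the map $(\sigma,r)\mapsto F_0(\sigma)+rN_0(\sigma)$ is a $C^1$ diffeomorphism of $[0,L_0]\times(-\delta,\delta)$ onto a neighbourhood of $\Gamma_0$. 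I would look for $\Gamma(t)$ as a normal graph over $\Gamma_0$, $\widetilde F(\sigma,t)=F_0(\sigma)+\rho(\sigma,t)N_0(\sigma)$, with $\rho(\cdot,0)\equiv 0$ and $\rho(0,t)=\rho(L_0,t)=0$ — the boundary conditions keeping the curve's ends at $P$ and $Q$. Writing out $V=-\kappa+A$ for such a graph with the Frenet relations of $\Gamma_0$ turns it into a quasilinear equation
\[
\partial_t\rho=\frac{\partial_\sigma^2\rho}{(1-\rho\kappa_0)^2+(\partial_\sigma\rho)^2}+b(\sigma,\rho,\partial_\sigma\rho),
\]
where $b$ is built from $\kappa_0$ and $A$ (with $b(\sigma,0,0)=A-\kappa_0(\sigma)$); the leading coefficient equals $1$ when $\rho=\partial_\sigma\rho=0$, so the equation is uniformly parabolic while $\rho$ stays small and $\partial_\sigma\rho$ bounded. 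The parametrisation $F(s,t)$ asked for in the theorem is then recovered from $\widetilde F$ by the time-dependent change of the curve parameter required by (\ref{eq:meancurpara}); that reparametrisation solves an auxiliary scalar problem, is uniquely determined, and is geometrically inert, so it suffices to produce a unique $\rho$.

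\textbf{Local solvability.} I would fix $p>3$ and seek $\rho$ in the ball $\mathcal{B}_{R,T}$ of functions in $W^{2,1}_p([0,L_0]\times[0,T])$ with $\rho(\cdot,0)\equiv 0$, $\rho(0,\cdot)=\rho(L_0,\cdot)=0$, and $\|\rho\|_{W^{2,1}_p}\le R$; by the parabolic Sobolev embedding such $\rho$ lie in $C^{1+\alpha,(1+\alpha)/2}$ with $\alpha=1-3/p$, so $\rho$ and $\partial_\sigma\rho$ are continuous and the coefficients above are meaningful. Given $\bar\rho\in\mathcal{B}_{R,T}$, let $\Phi(\bar\rho)$ solve the \emph{linear} problem obtained by freezing the coefficients at $\bar\rho$; unique solvability and the estimate $\|\Phi(\bar\rho)\|_{W^{2,1}_p}\le C\|b(\cdot,\bar\rho,\partial_\sigma\bar\rho)\|_{L^p}$ are classical $L^p$ parabolic theory — the leading coefficient is continuous and, the data being homogeneous, all compatibility conditions hold automatically. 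Because the data vanish, the right side can be absorbed into a factor $T^{\theta}$ with $\theta>0$, so $\|\Phi(\bar\rho)\|\le CT^{\theta}$, and the local Lipschitz dependence of the coefficients on $(\rho,\partial_\sigma\rho)$ gives $\|\Phi(\bar\rho_1)-\Phi(\bar\rho_2)\|\le CT^{\theta}\|\bar\rho_1-\bar\rho_2\|$. Taking $R$ and then $T$ small makes $\Phi$ a contraction of $\mathcal{B}_{R,T}$ into itself; its fixed point is the sought solution on $[0,T]$, and standard interior bootstrapping (switching to coordinates adapted to $\Gamma(t_0)$ for a small $t_0>0$, which is already more regular) shows $t\mapsto\Gamma(t)$ is smooth for $t>0$, so $\widetilde F$ genuinely obeys $V=-\kappa+A$.

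\textbf{Uniqueness.} If $F^{\sharp}$ is another solution, then $\Gamma^{\sharp}(t):=F^{\sharp}([0,L(t)],t)$ solves $V=-\kappa+A$ with $\Gamma^{\sharp}(0)=\Gamma_0$; by continuity it stays in the tubular neighbourhood of $\Gamma_0$ for $t\in[0,T']$ and is there a normal graph $\rho^{\sharp}$ solving the same quasilinear problem as $\rho$. Shrinking $T'$ so that $\rho^{\sharp}\in\mathcal{B}_{R,T'}$, the contraction above forces $\rho^{\sharp}=\rho$, i.e. $\Gamma^{\sharp}(t)=\Gamma(t)$, and since the parametrisation is pinned down by (\ref{eq:meancurpara}) this gives $F^{\sharp}=F$.

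\textbf{Main obstacle.} The part that needs care is the interplay of the low ($C^2$ rather than $C^{2+\alpha}$) regularity of $F_0$ with the corners $(\sigma,t)=(0,0),(L_0,0)$: a $C^2$ curve through the fixed endpoints generically violates the first-order compatibility condition there (which would read $\kappa_0=A$ at $P$ and $Q$), so no solution can be $C^2$ up to those corners and one is forced to run the scheme in a class — such as $W^{2,1}_p$ — that tolerates the mismatch. This is precisely why it pays to reduce first, via the normal graph, to \emph{homogeneous} data, after which the linear theory demands no compatibility at all; the remaining ingredients (the Frenet computation yielding the equation, the linear $L^p$ estimate, and the $T^{\theta}$-contraction) are then routine.
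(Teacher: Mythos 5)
Your proposal is correct and follows essentially the same route as the paper: the paper's Angenent-style ``transport map'' is generated by the flow of $X=\nabla d$, which is exactly your normal-graph representation $F_0(\sigma)+\rho(\sigma,t)N_0(\sigma)$, and it likewise reduces the problem to a scalar quasilinear parabolic equation with homogeneous initial and boundary data. The only difference is technical: the paper solves that equation by citing the quasilinear theory of Ladyzhenskaya--Solonnikov--Ural'ceva to get a solution in $C([0,L_0]\times[0,T_0))\cap C^{2+\alpha,1+\alpha/2}([0,L_0]\times(0,T_0))$, whereas you run an explicit $W^{2,1}_p$ contraction; your remark about the failure of corner compatibility (so that $C^2$ regularity up to $t=0$ at the endpoints cannot be expected) is a correct reading of why the paper's regularity class excludes $t=0$.
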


Assume that $P=(-a,0)$, $Q=(a,0)$, where $0<a\leq 1/A$. Before giving the three categories result, we introduce two equilibrium solutions of (\ref{eq:meancur}) with boundary condition (\ref{eq:fixbound}). Denote 
$$
\Gamma_*=\{(x,y)\in\mathbb{R}^2\mid y=\sqrt{1/A^2-x^2}-\sqrt{1/A^2-a^2}, -a\leq x\leq a\}
$$
and
$$
\Gamma^*=\partial B_{\frac{1}{A}}\big((0,\sqrt{1/A^2-a^2})\big)\setminus\{(x,-y)\in\mathbb{R}^2\mid (x,y)\in\Gamma_*\}.
$$

\begin{figure}[htbp]
	\begin{center}
            \includegraphics[height=6.0cm]{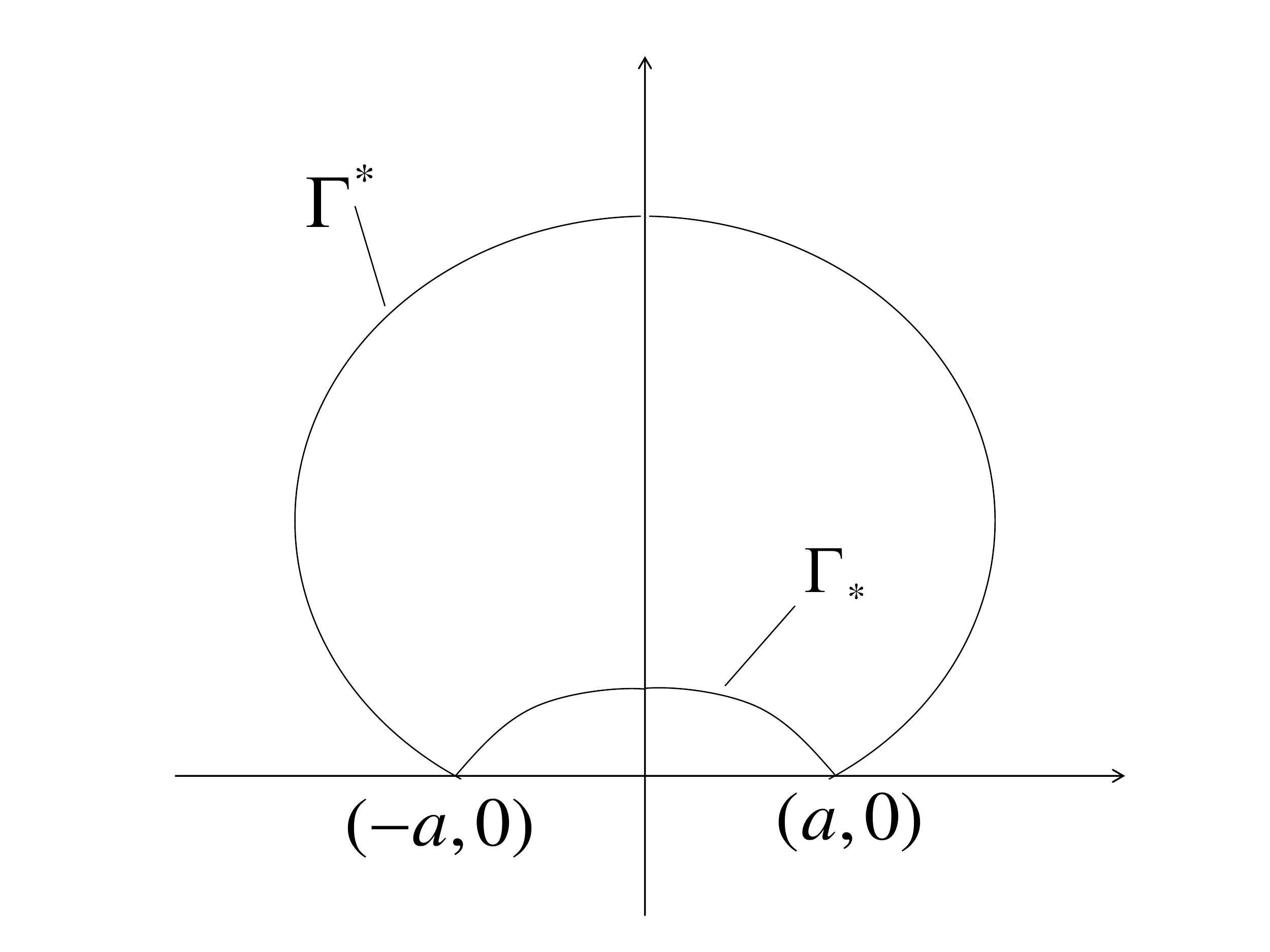}
		\vskip 0pt
		\caption{Equilibrium solutions of (\ref{eq:meancur})}
        \label{fig:stationaryso}
	\end{center}
\end{figure}

Obviously, on $\Gamma_*$ and $\Gamma^*$, there holds $\kappa=A$ and the fixed boundary condition. Here we give the three categories theorem. In the following theorem we consider a family of initial curves given by
$$
\Gamma_{\sigma}=\{(x,y)\in\mathbb{R}^2\mid y=\sigma\varphi(x), -a\leq x\leq a\}.
$$
Here $\varphi$ is even, $\varphi\in C^2\big([-a,a]\big)$ and $\varphi^{\prime\prime}(x)\leq0$, $-a<x<a$. And assume that for all $\sigma\in\mathbb{R}$, $\Gamma_{\sigma}$ intersects $\Gamma^*$ at most fourth(including the extreme points). Denote $\Gamma_{\sigma}(t)$ being the solution with $\Gamma_{\sigma}(0)=\Gamma_{\sigma}$.
\begin{thm}\label{thm:category}
There exists $\sigma^*>0$ such that 

(1). for $\sigma>\sigma^*$, there exists $T_{\sigma}^*<T_{\sigma}$ such that $\Gamma_{\sigma}(t)\succ \Gamma^*$, $T_{\sigma}^*<t<T_{\sigma}$;

(2). for $\sigma=\sigma^*$, $T_{\sigma}=\infty$ and $\Gamma_{\sigma}(t)\rightarrow \Gamma^*$ in $C^1$, as $t\rightarrow \infty$;

(3). for $\sigma<\sigma^*$, $T_{\sigma}=\infty$ and $\Gamma_{\sigma}(t)\rightarrow \Gamma_*$ in $C^1$, as $t\rightarrow \infty$.

Where $T_{\sigma}$ denotes the maximal existence time of $\Gamma(t)$.
\end{thm}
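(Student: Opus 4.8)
\emph{Proof idea.} The strategy combines three tools. First, a comparison principle for embedded solutions of \eqref{eq:meancurpara}--\eqref{eq:fixbound}: since the equation is quasilinear parabolic and the endpoints $P,Q$ are fixed and common, two solutions ordered at $t=0$ stay ordered, and by the strong maximum principle and the Hopf lemma they are strictly ordered in the interior for $t>0$ unless they coincide. As $\varphi$ is concave with $\varphi(\pm a)=0$ we have $\varphi\ge 0$, so $\sigma_1<\sigma_2$ forces $\Gamma_{\sigma_1}\prec\Gamma_{\sigma_2}$ and hence $\Gamma_{\sigma_1}(t)\prec\Gamma_{\sigma_2}(t)$; and $\Gamma_*,\Gamma^*$, being stationary, are barriers, so once $\Gamma_\sigma(t_0)\succ\Gamma^*$ one has $\Gamma_\sigma(t)\succ\Gamma^*$ for all $t\in[t_0,T_\sigma)$, and similarly for the relation ``below''. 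Second, a Lyapunov functional: \eqref{eq:meancurpara} is the negative gradient flow of $\mathcal F(\Gamma)=L(\Gamma)-A\,\mathcal A(\Gamma)$, where $\mathcal A(\Gamma)$ is the algebraic area between $\Gamma$ and the segment $\overline{PQ}$, so $\mathcal F$ strictly decreases along nonstationary solutions; any solution confined to a fixed compact region is therefore globally defined and, by interior parabolic estimates, converges in $C^\infty$ to a critical point, i.e. a circular arc of curvature $A$ through $P$ and $Q$ --- and, orientations taken into account, the only equilibria are $\Gamma_*$ and $\Gamma^*$. Third, since $\Gamma^*$ is stationary, the number $Z(t)$ of intersection points of $\Gamma_\sigma(t)$ with $\Gamma^*$ is finite and nonincreasing, strictly dropping at every nontransverse contact; by hypothesis $Z(0)\le 4$.

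Using these I would first prove that, for each fixed $\sigma$, exactly one of (1)--(3) occurs. A scalar maximum principle for the height along the curve (whose lowest interior value moves strictly upward, because the driving term has a favorable sign there) gives a uniform lower height bound; together with comparison with $\Gamma_*$ or $\Gamma^*$ it shows that the solution stays embedded and, while it remains below $\Gamma^*$, a graph over $[-a,a]$. The alternatives are then: $\Gamma_\sigma$ rises above $\Gamma^*$ at some time, which by the barrier property is case (1); or $\Gamma_\sigma(t)$ stays for all large $t$ below $\Gamma^*$, hence in a fixed compact region, whence $T_\sigma=\infty$ and $\Gamma_\sigma(t)$ converges to an equilibrium below $\Gamma^*$, which is $\Gamma_*$ --- case (3) --- or $\Gamma^*$ --- case (2). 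Here the intersection number, with $Z(0)\le 4$, excludes an orbit straddling $\Gamma^*$ forever: $Z(t)$ is eventually constant, and one checks that a persistent crossing is incompatible with $\mathcal F$ decreasing unless the crossings concentrate on $\Gamma^*$ (again case (2)) or are pushed into $P,Q$ as the part above $\Gamma^*$ grows (case (1)). In particular a finite-time singularity can only occur in case (1), after the curve has overtaken $\Gamma^*$.

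Next I would pin down the threshold. Let $\sigma^*$ be the supremum of the set of $\sigma$ of type (3). This set is nonempty: for $\sigma\le 0$ one has $\Gamma_\sigma\preceq\overline{PQ}\preceq\Gamma_*$ at $t=0$, hence $\Gamma_\sigma(t)\preceq\Gamma_*$ always, and being trapped between $\Gamma_*$ and a sufficiently deep circular-arc subsolution the orbit is global and tends to $\Gamma_*$. It is a lower set: if $\sigma_1$ is of type (3) and $\sigma<\sigma_1$, then $\Gamma_\sigma(t)\preceq\Gamma_{\sigma_1}(t)$ is eventually $C^1$-close to a curve near $\Gamma_*$, hence trapped strictly below $\Gamma^*$, hence of type (3) (it cannot converge to $\Gamma^*$, which is far from $\Gamma_*$). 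It is open, by continuous dependence on the data over compact time intervals together with the barrier property (once $\Gamma_\sigma(T)$ is $C^1$-close to $\Gamma_*$ it lies strictly below $\Gamma^*$ with room to spare). So type (3) is exactly $(-\infty,\sigma^*)$. Symmetrically, type (1) is an open upper set, and it is nonempty: for $\sigma$ large $\Gamma_\sigma$ lies above an arc $\beta_0$ of a circle of radius $R>1/A$ through $P,Q$, whose evolution $\beta(t)$ is global, expands, and satisfies $\beta(T_1)\succ\Gamma^*$ at a fixed finite $T_1$; since $\Gamma_\sigma(t)\succeq\beta(t)$ while both exist, and since (by the previous paragraph) any finite-time singularity of $\Gamma_\sigma$ is preceded by its overtaking $\Gamma^*$, the solution $\Gamma_\sigma$ is of type (1). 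Hence type (1) $=(\sigma_1,\infty)$ for some $\sigma_1\ge\sigma^*$, and every $\sigma\in[\sigma^*,\sigma_1]$ is of type (2).

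Finally I would show $\sigma^*=\sigma_1$, which is the heart of the matter and expresses the instability of $\Gamma^*$. If $\sigma^*<\sigma'<\sigma''<\sigma_1$, then $\Gamma_{\sigma'}(t),\Gamma_{\sigma''}(t)\to\Gamma^*$, and for large $t$ they are normal graphs $\Gamma^*+u'(\cdot,t)$ and $\Gamma^*+u''(\cdot,t)$ over $\Gamma^*$ with $u''-u'>0$ in the interior and $u',u''\to 0$. The flow linearized at $\Gamma^*$ reads $\partial_t u=\mathcal L u+(\text{higher order})$, with $\mathcal L$ a Schr\"odinger-type operator on the arc $\Gamma^*$ with Dirichlet data at $P,Q$; since $\Gamma^*$ is unstable, the lowest eigenvalue of $\mathcal L$ is negative with positive eigenfunction, so a trajectory converging to $\Gamma^*$ does so tangent to the stable subspace, forcing $u'$ and $u''$ to be asymptotically proportional to eigenfunctions $\psi_j$ with $j\ge 1$. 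By Sturm oscillation every such $\psi_j$ changes sign, contradicting $u''-u'>0$. Hence type (2) is the single point $\sigma^*$ and $\Gamma_{\sigma^*}(t)\to\Gamma^*$; the $C^1$ (indeed $C^\infty$) convergence in (2) and (3) is then upgraded from Hausdorff convergence by parabolic regularity. I expect the principal difficulties to be the a priori curvature estimates underlying global existence, graph-preservation and convergence in the confined case, the barrier construction controlling the large-$\sigma$ behaviour, and the spectral/instability argument pinning down $\sigma^*$.
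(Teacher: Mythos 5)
Your overall architecture is the same as the paper's: classify each orbit by its intersection pattern with $\Gamma^*$ (the paper's Lemma \ref{lem:sigulartime}), show the type-(1) and type-(3) parameter sets are open, monotone and nonempty (Lemmas \ref{lem:sigmasmallc}, \ref{lem:sigmalargec}, Propositions \ref{prop:sigmalarge}, \ref{prop:sigmasmall}), and obtain convergence to an equilibrium from the Lyapunov functional $L-A\cdot\mathrm{Area}$ (Lemma \ref{lem:ly}). The one genuinely different ingredient is how you collapse the type-(2) interval to a point. You linearize at $\Gamma^*$, use that the major arc has length $>\pi/A$ so the principal eigenvalue is unstable with positive eigenfunction, and invoke Sturm oscillation to forbid two ordered orbits from both converging to $\Gamma^*$. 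The paper instead proves (Proposition \ref{pro:barrierfunction}) that if $\Gamma_{\sigma_0}(t)\to\Gamma^*$ then every $\sigma>\sigma_0$ is of type (1), by sliding a vertical translate $\Gamma^c(t)$ of $\Gamma_{\sigma_0}(t)$ (capped by two vertical segments at $x=\pm a$) underneath $\Gamma_\sigma(t)$, checking that this translate is a subsolution, and noting it converges to a translate of $\Gamma^*$ lying strictly above $\Gamma^*$. The paper's route is entirely elementary; yours needs the nontrivial fact that an orbit converging to an equilibrium does so tangentially to the stable subspace (a Chen--Pol\'a\v{c}ik-type asymptotic eigenfunction expansion), though it does make the mechanism --- instability of the major arc --- more transparent.

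There are two concrete gaps. First, your nonemptiness argument for type (1) does not work as stated: an arc of a circle of radius $R>1/A$ ``through $P,Q$'' either is evolved as part of the freely expanding circle, in which case its endpoints leave $P$ and $Q$ and the fixed-endpoint comparison principle does not apply to it (you must control where its free endpoints sit relative to $\Gamma_\sigma(t)$); or it is the minor arc evolved as a fixed-endpoint flow, but that arc has apex height $a^2/\bigl(R+\sqrt{R^2-a^2}\bigr)$, which is \emph{below} $\Gamma_*$, so it is trapped in the basin of $\Gamma_*$ and never overtakes $\Gamma^*$. This is precisely why the paper pairs the off-center expanding circle of Proposition \ref{prop:sigmalarge} with the Grim-reaper traveling-wave subsolution of Lemmas \ref{lem:grimreaper} and \ref{lem:speed}: the Grim reaper forces $y_\sigma(t)>K$ up to the time $t^*$ at which the circle has swept past $\Gamma^*$, and that height bound is exactly what keeps the circle's free endpoints away from $\Gamma_\sigma(t)$ so that only the interior comparison is needed. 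Second, your reduction to Cartesian graphs over $[-a,a]$ ``while the curve remains below $\Gamma^*$'' fails near $\Gamma^*$: $\Gamma^*$ is the \emph{major} arc and is not a graph over $[-a,a]$, so neither are curves close to it; the paper avoids this by passing to polar coordinates about the origin (Lemma \ref{lem:regular}) for the cases that straddle or approach $\Gamma^*$. Both gaps are repairable, but they require the paper's constructions (or equivalents) rather than the ones you describe.
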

 The notation ``$\succ$'' can be seen as an order. The precise definition is given in Section 2. We will interpret the sense of $C^1$ convergence in Definition \ref{def:C1close}.

{\bf Main method.} Theorem \ref{thm:exist} can be easily proved by transport map. The transport map is first used by \cite{A} to consider the mean curvature flow under the non-graph condition. For the three categories result, we use the intersection number principle to classify the type of the solutions in Lemma \ref{lem:sigulartime}. Since $\Gamma_{\sigma}$ intersects $\Gamma^*$ at most fourth, the intersection number between $\Gamma_{\sigma}(t)$ and $\Gamma^*$ can only be two or four. In Lemma \ref{lem:sigulartime}, one of the following three conditions can hold:

(1). The curve $\Gamma_{\sigma}(t)$ intersects $\Gamma^*$ twice and $\Gamma_{\sigma}(t)\succ \Gamma^*$ eventually;

(2). The curve $\Gamma_{\sigma}(t)$ intersects $\Gamma^*$ fourth for every $t>0$. 

(3).  The curve $\Gamma_{\sigma}(t)$ intersects $\Gamma^*$ twice and $\Gamma^*\succ\Gamma_{\sigma}(t)$ eventually. 

Seeing future, under the condition (2) above, $\Gamma_{\sigma}(t)\rightarrow \Gamma^*$ in $C^1$, as $t\rightarrow \infty$; under the condition (3) above, $\Gamma_{\sigma}(t)\rightarrow \Gamma_*$ in $C^1$, as $t\rightarrow \infty$. In this paper, we prove the asymptotic behavior by using Lyapunov function introduced in Section 5. 

{\bf A short review for mean curvature flow.} For the classical mean curvature flow: $A=0$ in (\ref{eq:meancur}), there are many results. Concerning this problem, Huisken \cite{H} shows that any solution that starts out as a convex, smooth, compact surface remains so until it shrinks to a "round point" and its asymptotic shape is a sphere just before it disappears. He proves this result for hypersurfaces of $\mathbb{R}^{n+1}$ with $n\geq2$, but Gage and Hamilton \cite{GH} show that it still holds when $n=1$, the curves in the plane. Gage and Hamilton also show that embedded curve remains embedded, i.e. the curve will not intersect itself. Grayson \cite{Gr} proves the remarkable fact that such family must become convex eventually. Thus, any embedded curve in the plane will shrink to "round point" under curve shortening flow. 

For fixed extreme point problem, Forcadel, Imbert and Monneau \cite{FIM} consider a family of half lines evolves by (\ref{eq:meancur}) and one extreme point is fixed at the origin. Precisely, the family of curves given by polar coordinate,
$$
\left\{
\begin{array}{lcl}
x=\rho\cos\theta(\rho,t),\\
y=\rho\sin\theta(\rho,t),
\end{array}
\right.
$$
for $0\leq \rho<\infty$. Therefore, $\theta(\rho,t)$ satisfies
\begin{equation}\label{eq:vistheta}
\rho\theta_t=A\sqrt{1+\rho^2\theta_{\rho}^2}+\theta_{\rho}\big(\frac{2+\rho^2\theta_{\rho}^2}{1+\rho^2\theta_{\rho}^2}\big)+\frac{\rho\theta_{\rho\rho}}{1+\rho^2\theta_{\rho}^2},\ t>0,\rho>0.
\end{equation}
Obviously, this problem is singular near $\rho=0$. They consider the solution of (\ref{eq:vistheta}) in viscosity sense. Since near the fixed extreme point mean curvature flow has singularity by using polar coordinate, there are some papers considering this problem by digging a hole. For example,
 Giga, Ishimura and Kohsaka \cite{GIK} consider anisotropic curvature flow equation with driving force in the ring domain $r<\rho<R$. At the boundary, the family of the curves is imposed being perpendicular to the boundary, seeing Figure \ref{fig:dighole}.
\begin{figure}[htbp]
	\begin{center}
            \includegraphics[height=6.0cm]{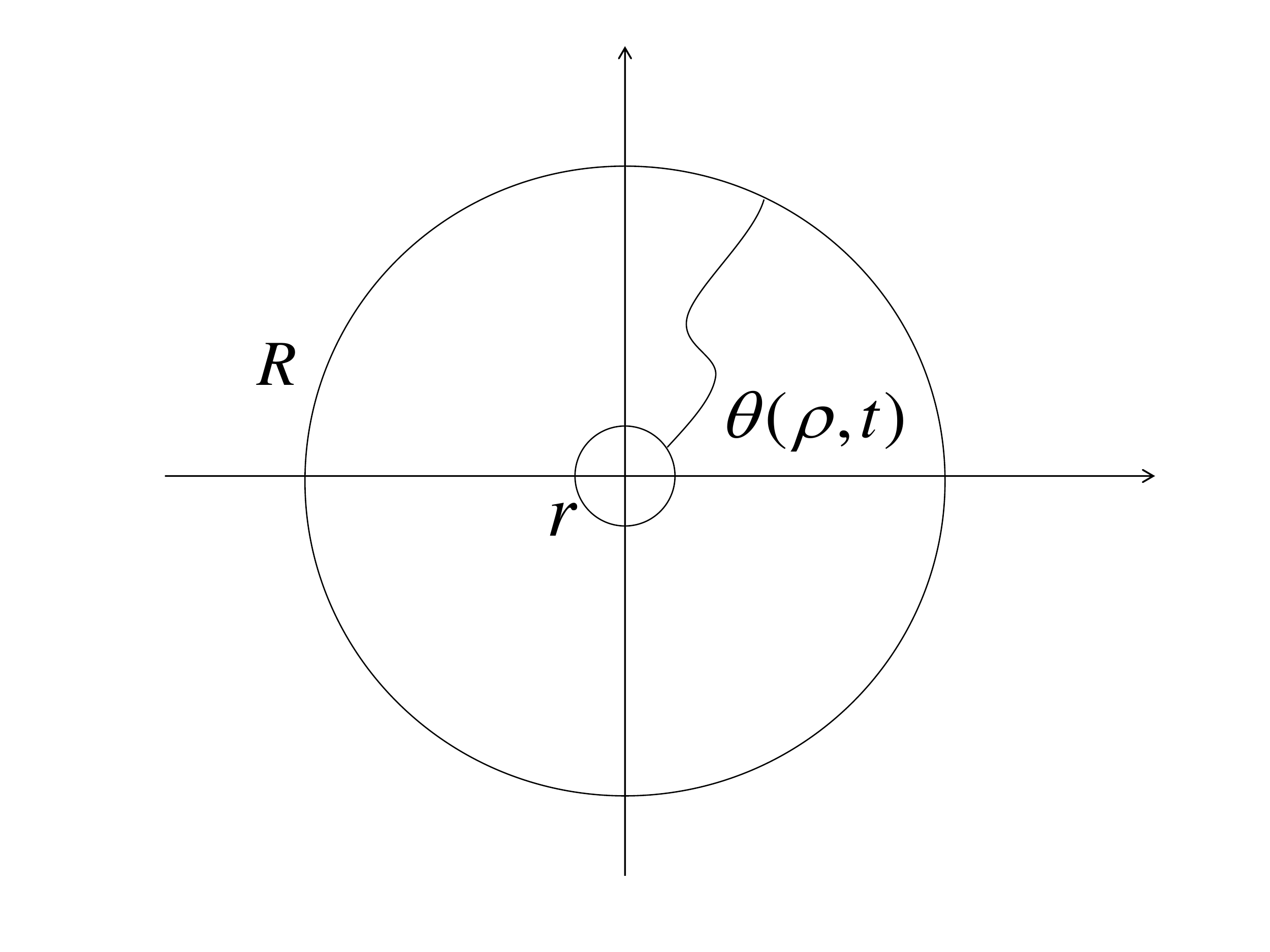}
		\vskip 0pt
		\caption{Research in \cite{GIK}}
        \label{fig:dighole}
	\end{center}
\end{figure}

{\bf Motivation of this research.} Ohtsuka, Goto and Nakagawa first prove the existence and uniqueness of spiral crystal growth for (\ref{eq:meancur}) by level set method in \cite{GNO} and \cite{O}. But they also consider this problem by digging a hole near the fixed points. Recently, \cite{OTG} simulates the level set of the solution given in \cite{O} by numerical method. In their paper, for $a>1/A$, the level set evolves as in Figure \ref{fig:introduction1}, \ref{fig:introduction2} and \ref{fig:introduction3}.

\begin{figure}[htbp]
	\begin{center}
            \includegraphics[height=5.0cm]{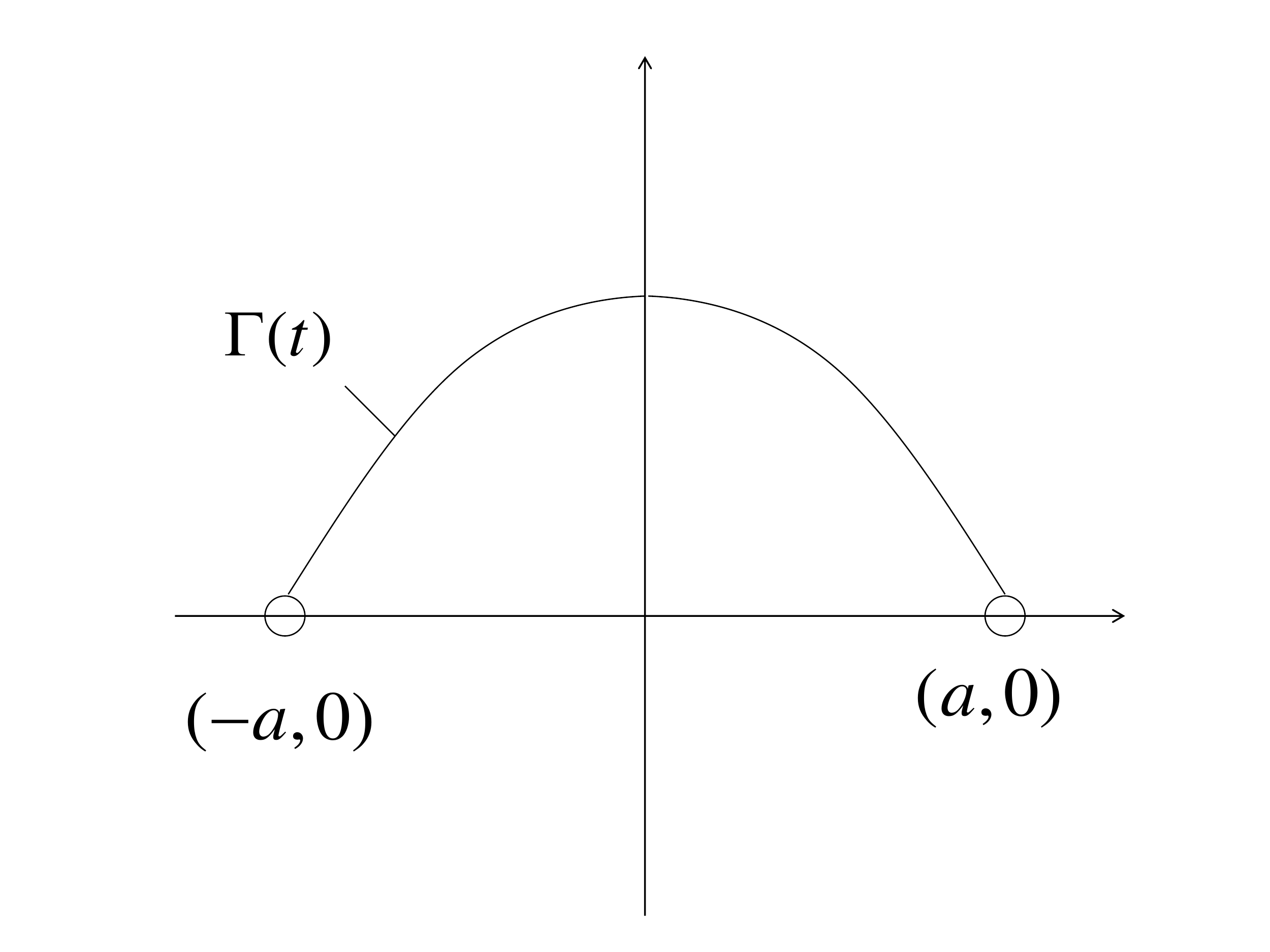}
		\vskip 0pt
		\caption{Evolution of level set $\#$1}
        \label{fig:introduction1}
	\end{center}
\end{figure}

\begin{figure}[htbp]
	\begin{center}
            \includegraphics[height=5.0cm]{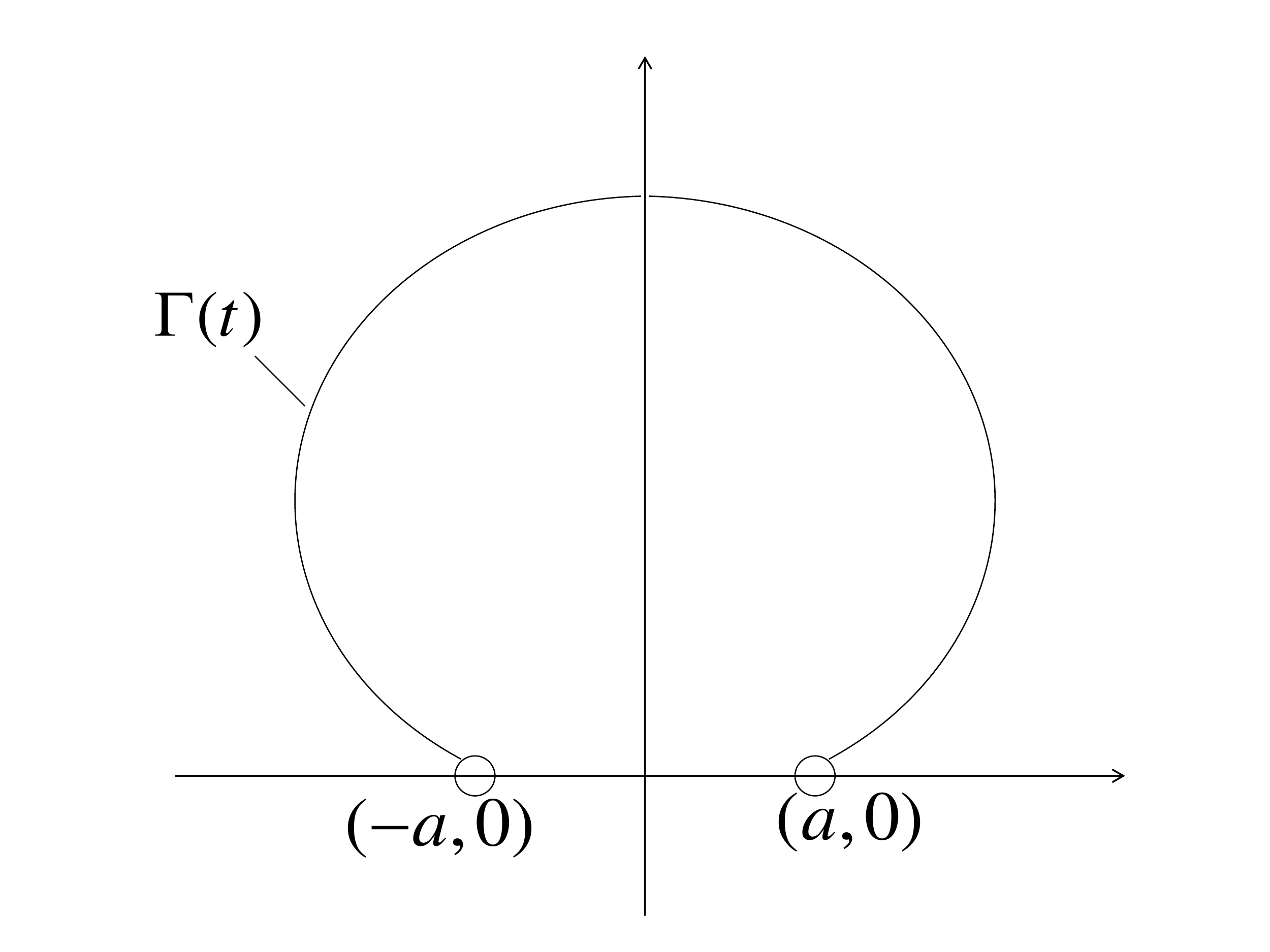}
		\vskip 0pt
		\caption{Evolution of level set $\#$2}
        \label{fig:introduction2}
	\end{center}
\end{figure}

\begin{figure}[htbp]
	\begin{center}
            \includegraphics[height=5.0cm]{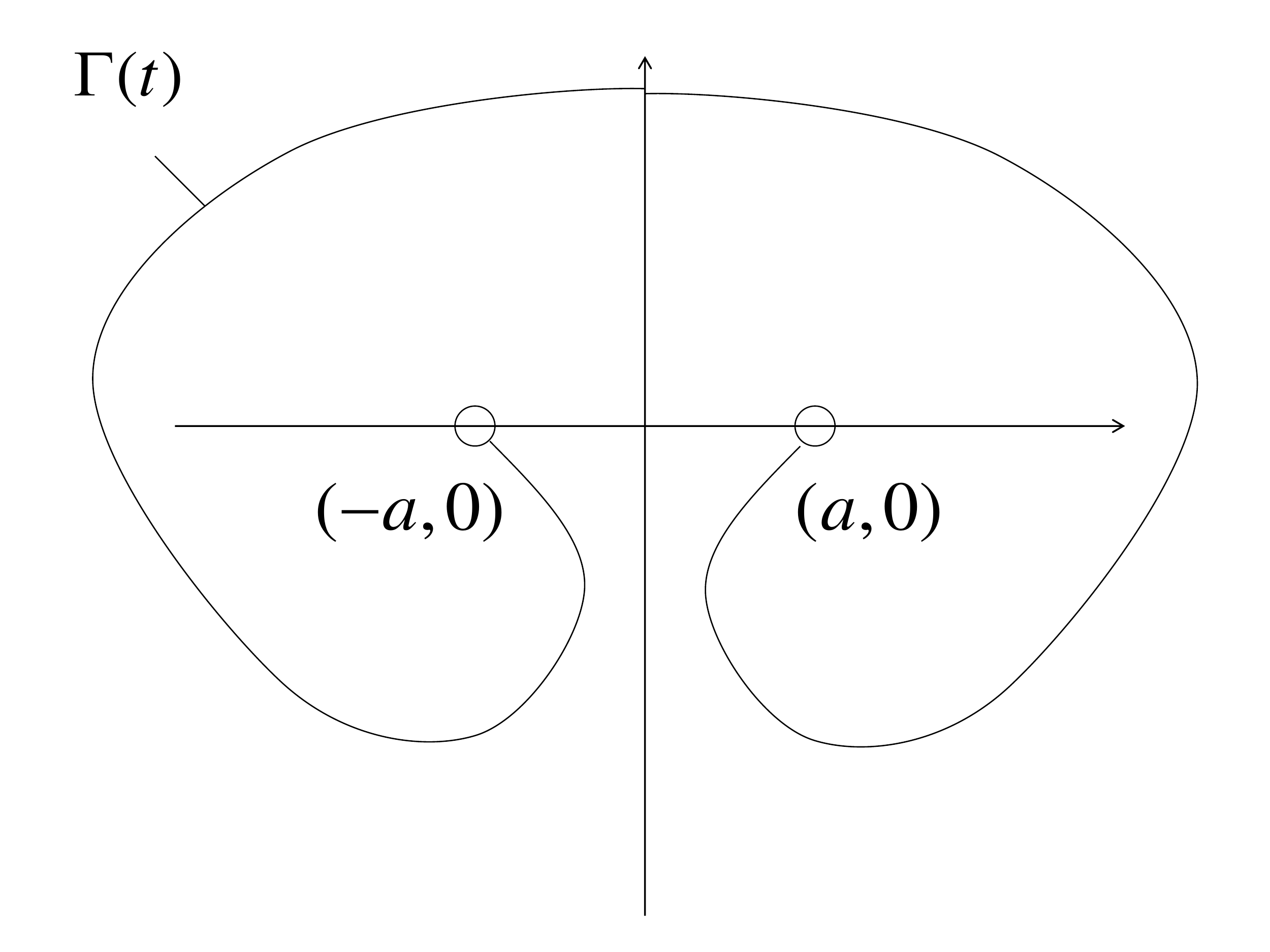}
		\vskip 0pt
		\caption{Evolution of level set $\#$3}
        \label{fig:introduction3}
	\end{center}
\end{figure}

Although, in our paper, we only consider the problem under the condition $a\leq 1/A$, the simulated results in \cite{OTG} give the hit about this research. We are devoted to considering them in analytic way.

The rest of this paper is organized as follows. In Section 2, we give some preliminary knowledge including the definition of semi-order, comparison principle and intersection number principle. In Section 3, we give the existence and uniqueness result for the fixed extreme points problem. Moreover, in Lemma \ref{lem:regular}, we give a sufficient condition for the solution $\Gamma_{\sigma}(t)$ remaining regular. In Section 4, we give the asymptotic behavior of the solution $\Gamma_{\sigma}(t)$ when $\sigma$ is large or small. Lemma \ref{lem:sigulartime} gives an important result for classifying $\Gamma_{\sigma}(t)$ by intersection number. In Section 5, we prove the asymptotic behavior for the condition (3) in Lemma \ref{lem:sigulartime} by Lyapunov function. In Section 6, we give the proof of Theorem \ref{thm:category}.

\section{Preliminary}
{\bf Semi-order} We want to define a semi-order for curves with the same fixed extreme points.

\begin{defn}\label{def:semiorder}
For any points $P$, $Q\in \mathbb{R}^2$ and $P\neq Q$, assume that maps $F_i(s)\in C([0,l_i]\rightarrow\mathbb{R}^2)$ and $F_i$ are differentiable at $0$ and $l_i$.The curves $\gamma_i$ are given by $\gamma_i=\{F_i(s)\mid 0\leq s\leq l_i, F_i(0)=P,F_i(l_i)=Q\}$, where $l_i$ is the length of $\gamma_i$, $i=1,2$. It is easy to see that $\gamma_i$ have the same extreme points $P$, $Q$, $i=1,2$. We say $\gamma_1\succ\gamma_2$, if 

(1). There exists connect, bounded and open domain $\Omega$ such that $\partial \Omega=\gamma_1\cup\gamma_2$;

(2). $\frac{d}{ds}F_1(0)\cdot\frac{d}{ds}F_2(0)\neq 1$ and $\frac{d}{ds}F_1(l_1)\cdot\frac{d}{ds}F_2(l_2)\neq1$;

(3). The domain $\Omega$ is located in the right hand side of $\gamma_1$, when someone walks along $\gamma_1$ from $P$ to $Q$. 

Where ``$\cdot$'' denotes the inner product in $\mathbb{R}^2$. We say $\gamma_1\succeq\gamma_2$, if there exist two sequences of curves $\{\gamma_{in}\}_{n\geq1}$, $i=1,2$ such that 

(1). $\lim\limits_{n\rightarrow\infty}d_H(\gamma_{in},\gamma_i)\rightarrow0$, $i=1,2$;

(2). $\gamma_{1n}\succ\gamma_{2n}$, $n\geq 1$.

Where $d_H(A,B)$ denotes the Hausdorff distance for set $A,B\subset \mathbb{R}^2$.
\end{defn}

\begin{figure}[htbp]
	\begin{center}
            \includegraphics[height=6.0cm]{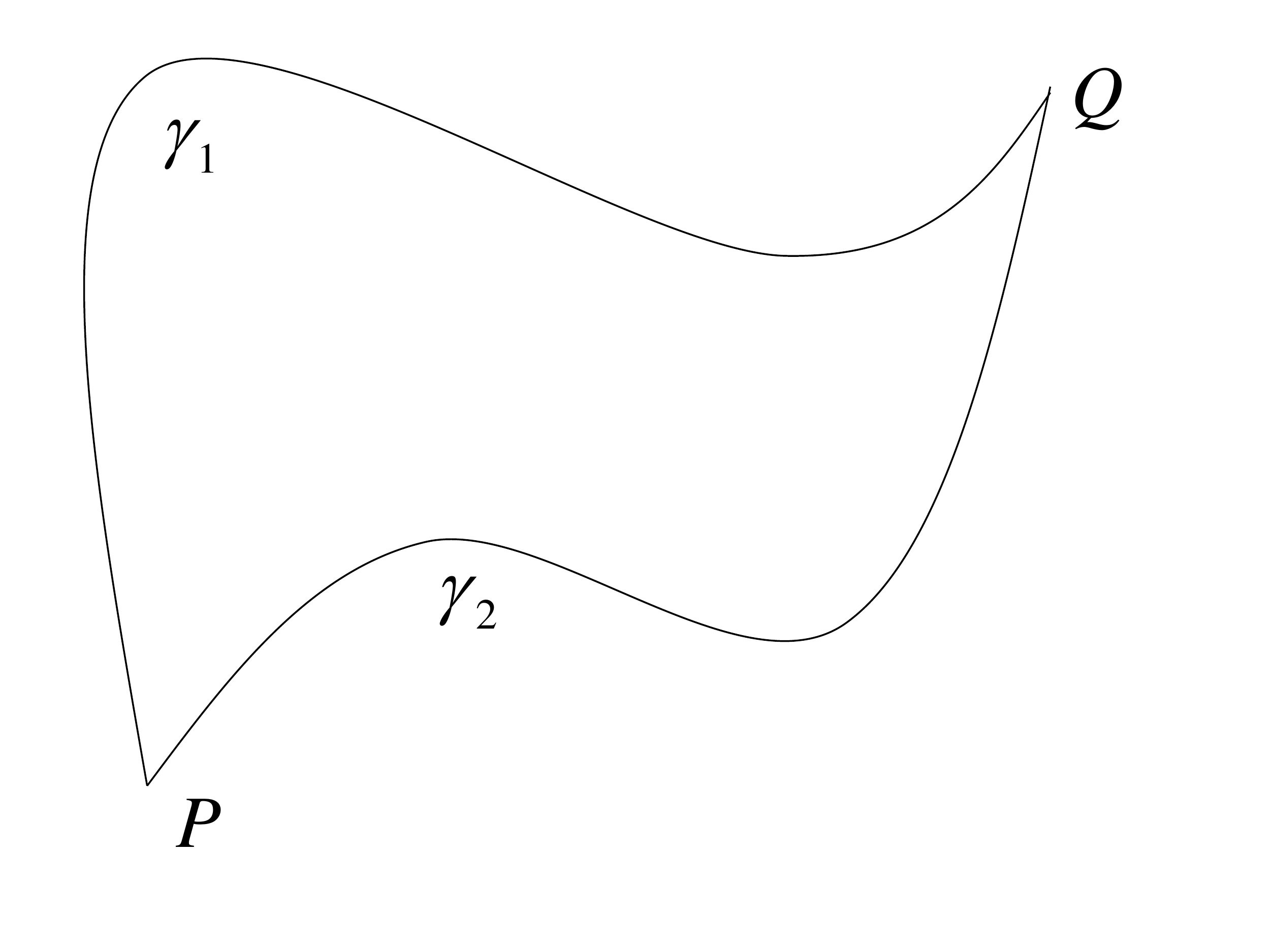}
		\vskip 0pt
		\caption{Definition \ref{def:semiorder}}
        \label{fig:semiorder}
	\end{center}
\end{figure}

Let $F(s)\in C^2([0,l]\rightarrow\mathbb{R}^2)$ and $\gamma=\{F(s)\mid s\in[0,l], F(0)=P, F(l)=Q\}$. Using the definition of semi-order, we can define a shuttle neighbourhood of $\gamma$. Seeing the assumption of $\gamma$, we can extend $\gamma$ by $\gamma^*$ such that $\gamma^*$ is $C^1$ curve and divides $\mathbb{R}^2$ into two connect parts denoted by $\Omega_1$ and $\Omega_2$. Moreover, $\Omega_1$ is located in the left hand side when someone walks along $\gamma^*$ from $P$ to $Q$.

\begin{rem}\label{rem:direction}
We say the normal vector of $\gamma$ is \textit{upward}(\textit{downward}), if the normal vector points to the domain $\Omega_1$($\Omega_2$).
\end{rem}

\begin{defn}[Shuttle neighbourhood]\label{def:shuttlenei}
We say $V$ is a \textit{shuttle neighbourhood} of $\gamma$, if there exist $\gamma_1$ and $\gamma_2$ such that

(1). $\gamma_i\subset \Omega_i$, $i=1,2$;

(2). $\gamma_1\succ\gamma\succ\gamma_2$;

(3). $\partial V=\gamma_1\cup \gamma_2$.
\end{defn}

{\bf Comparison principle and intersection number principle} Here we introduce the comparison principle and intersection number principle. The intersection number principle can help us classify the solutions.

For giving comparison principle, we must define sub,super-solution of (\ref{eq:meancurpara}).

\begin{defn}\label{def:subsup}
We say a continuous family of continuous curves $\{\gamma(t)\}$ is a sub(super)-solution of (\ref{eq:meancurpara}) and (\ref{eq:fixbound}), if

(1). $\gamma(t)$ are continuous curves and have the same extreme points $P$, $Q$;

(2). Let $\{S(t)\}$ be a smooth flow with extreme points $P$, $Q$. For some point $P^*$ and some time $t_0>0$ satisfying $P^*\in \gamma(t_0)$ but $P^*\neq P,\ Q$, if near the point $P^*$ and time $t_0$, $\{S(t)\}$ only intersects $\{\gamma(t)\}$ at $P^*$ and time $t_0$ from above(below). Let $V_{S(t)}$ denote the upward normal velocity of $S(t)$ and $\kappa_{S(t)}(P)$ denote the curvature at $P\in S(t)$. Then
$$
V_{S(t_0)}(P^*)\leq(\geq) -\kappa_{S(t_0)}(P^*)+A.
$$ 
\end{defn}

\begin{thm}[Comparison principle]\label{thm:comparison}
For two families of curves $\{\gamma_1(t)\}_{0\leq t\leq T}$ and $\{\gamma_2(t)\}_{0\leq t\leq T}$, assume $\{\gamma_1(t)\}_{0\leq t\leq T}$ is a super-solution of (\ref{eq:meancurpara}) and (\ref{eq:fixbound}), $\{\gamma_2(t)\}_{0\leq t\leq T}$ is a sub-solution of (\ref{eq:meancurpara}) and (\ref{eq:fixbound}). If $\gamma_1(0)\succeq\gamma_2(0)$, then $\gamma_1(t)\succ\gamma_2(t)$, $0\leq t\leq T$.
\end{thm}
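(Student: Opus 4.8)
I would prove the comparison principle by a ``first touching time'' argument, reducing the geometric statement to the classical parabolic maximum principle applied to local graph representations of the curves. The first step is a reduction. By regularizing the sub-solution from below --- replacing $\gamma_2$ by a smooth flow $\gamma_2^{\varepsilon}$ that satisfies the strict pointwise inequality $V_{\gamma_2^{\varepsilon}}<-\kappa_{\gamma_2^{\varepsilon}}+A$, with $\gamma_2^{\varepsilon}(0)\prec\gamma_2(0)$ and $\gamma_2^{\varepsilon}(0)\to\gamma_2(0)$ in Hausdorff distance, so that $\gamma_1(0)\succ\gamma_2^{\varepsilon}(0)$ --- and, symmetrically, by regularizing $\gamma_1$ from above, it suffices to prove the following: if $\gamma_2$ is a smooth flow with $V_{\gamma_2}<-\kappa_{\gamma_2}+A$ and $\gamma_1(0)\succ\gamma_2(0)$, then $\gamma_1(t)\succ\gamma_2(t)$ for all $t\in[0,T]$. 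Granting this, one applies it to $\gamma_2^{\varepsilon}$ and lets $\varepsilon\to0$ to obtain $\gamma_1(t)\succeq\gamma_2(t)$ (Definition~\ref{def:semiorder}), and the strengthening to $\succ$ for $t>0$ follows by the strong maximum principle.

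Suppose then, for contradiction, that $\gamma_1(t_1)\not\succ\gamma_2(t_1)$ for some $t_1\le T$, and let $t_0$ be the infimum of such times. Since $\gamma_1(0)\succ\gamma_2(0)$ and $\succ$ is stable under small perturbations of the two curves, $t_0>0$ and $\gamma_1(t)\succ\gamma_2(t)$ for $0\le t<t_0$; by continuity of the families $\gamma_1(t_0)\succeq\gamma_2(t_0)$, and were this strict the infimum would exceed $t_0$, so $\gamma_1(t_0)$ and $\gamma_2(t_0)$ touch \emph{tangentially} (a transversal crossing at $t_0$ would already be present slightly before $t_0$). Hence either (i) they have an interior contact point $P^{*}\neq P,Q$, or (ii) they become tangent at a fixed endpoint. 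In case (i): for $t<t_0$ the curves are disjoint near $P^{*}$ and come together there at time $t_0$, and since the region enclosed by $\gamma_1(t)\cup\gamma_2(t)$ lies on the downward side of $\gamma_1(t)$ (Remark~\ref{rem:direction}), the smooth curve $\gamma_2(t)$ approaches $\gamma_1(t)$ from below near $(P^{*},t_0)$, touching it only there. Thus $S(t):=\gamma_2(t)$ is an admissible test flow ``from below'' for the super-solution $\gamma_1$ at $(P^{*},t_0)$, and Definition~\ref{def:subsup} yields $V_{\gamma_2}(P^{*},t_0)\ge-\kappa_{\gamma_2}(P^{*},t_0)+A$, contradicting $V_{\gamma_2}<-\kappa_{\gamma_2}+A$. (If instead it is $\gamma_1$ that was regularized, to a strict super-solution, one argues symmetrically, testing $\gamma_2$ from above.)

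It remains to rule out case (ii), which I expect to be the main obstacle. At a fixed endpoint the two curves are forced to meet, so the loss of transversality there is a genuine boundary phenomenon that the interior condition of Definition~\ref{def:subsup} does not detect; it calls instead for a boundary-point (Hopf-type) argument localized at the endpoint. One route: write $\gamma_1(t)$ and $\gamma_2(t)$ near $P$ as graphs over a fixed line through $P$, note that their difference $w$ is nonnegative for $t<t_0$, vanishes identically along $P$ (both curves always pass through $P$) and has vanishing derivative there at time $t_0$ (tangency), and --- after subtracting the two flow equations and using the smoothness of the regularized side --- satisfies a linear parabolic differential inequality near $(P,t_0)$; the parabolic boundary-point lemma then forces $w\equiv0$ for $t<t_0$ near $P$, against $\gamma_1(t)\succ\gamma_2(t)$. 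An alternative is to wedge a short smooth barrier arc through $P$ between the two curves just before $t_0$ and propagate it by the interior argument of case (i). Once (i) and (ii) are excluded, $\gamma_1(t)\succ\gamma_2(t)$ holds on $[0,T]$ in the reduced setting, and undoing the regularizations completes the proof. Besides case (ii), the remaining technical point is to carry out the downward regularization of a possibly non-smooth sub-solution (and the upward one of a non-smooth super-solution) while respecting the fixed-endpoint condition~(\ref{eq:fixbound}); in the applications of this theorem one of the two families is already a smooth flow or a smooth stationary curve, so there this reduction is immediate.
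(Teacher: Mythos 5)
Your proposal follows essentially the same route as the paper, whose entire ``proof'' is the remark that the theorem is obtained by contradiction using local coordinate representations, the maximum principle and the Hopf lemma, with all details omitted. Your first-touching-time argument --- interior contact handled by testing the viscosity-type Definition~\ref{def:subsup} with the (regularized) smooth flow, endpoint tangency handled by the parabolic boundary-point lemma applied to the difference of local graphs --- is exactly that strategy carried out in detail, and your closing caveat about regularizing a non-smooth sub/super-solution correctly identifies the one step the paper's sketch silently assumes (harmless in its applications, where one of the two families is always a classical flow or a smooth stationary curve).
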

We can prove this theorem by contradiction. Using local coordinate representation, by maximum principle and Hopf lemma, the conclusion can be got easily. Here we omit the detail.

In this paper, besides intersection number $Z[\cdot,\cdot]$, we introduce a related
notion $SGN[\cdot,\cdot]$(first used by \cite{DGM}), which turns out to be exceedingly useful in classifying the types of the solutions.
\begin{defn}\label{def:signal}
For two curves $\gamma_1$ and $\gamma_2$ satisfying the same conditions in Definition \ref{def:semiorder}, we define:

(1). $Z[\gamma_1,\gamma_2]$ is the number of the intersections between curves $\gamma_1$ and $\gamma_2$. Noting that $\gamma_1$ and $\gamma_2$ have the same extreme points, then $Z[\gamma_1,\gamma_2]\geq2$;

(2). $SGN[\gamma_1,\gamma_2]$ is defined when $Z[\gamma_1,\gamma_2]<\infty$. Denoting $n+1:=Z[\gamma_1,\gamma_2]<\infty$, let $P=P_0$, $P_1$, $\cdots$, $P_{n-1}$, $P_n=Q$ be the intersections. Here we assume $\wideparen{P_{i+1}P_0}>\wideparen{P_iP_0}$ and $\widetilde{P_{i+1}P_0}>\widetilde{P_iP_0}$, $i=1,\cdots,n$, where $\wideparen{P_iP_j}$ denotes the arc length of $\gamma_1$ between $P_i$ and $P_j$; $\widetilde{P_iP_j}$ denotes the arc length of $\gamma_2$ between $P_i$ and $P_j$. If $\gamma_1\mid_{\wideparen{P_iP_{i-1}}}\succ\gamma_2\mid_{\widetilde{P_iP_{i-1}}}$, we say the sign between $P_i$ and $P_{i-1}$ is ``$+$''; Respectively, $\gamma_2\mid_{\widetilde{P_iP_{i-1}}}\succ\gamma_1\mid_{\wideparen{P_iP_{i-1}}}$, we say the sign between $P_i$ and $P_{i-1}$ is ``$-$'', $i=1,\cdots,n$. Where $\gamma_1\mid_{\wideparen{P_iP_{i-1}}}$ and $\gamma_2\mid_{\widetilde{P_iP_{i-1}}}$ denote the restriction between $P_{i-1}$ and $P_{i}$.

 $SGN[\gamma_1,\gamma_2]$ called \textit{ordered word set} consists the sign between $P_i$ and $P_{i-1}$, $i=1,\cdots,n$. 
\end{defn}

For explaining Definition \ref{def:signal}, we give an example. Seeing Figure \ref{fig:sign}, $Z[\gamma_1,\gamma_2]=6$ and 
$$
SGN[\gamma_1,\gamma_2]=[-\ +\ -\ +\ -].
$$
\begin{figure}[htbp]
	\begin{center}
            \includegraphics[height=6.0cm]{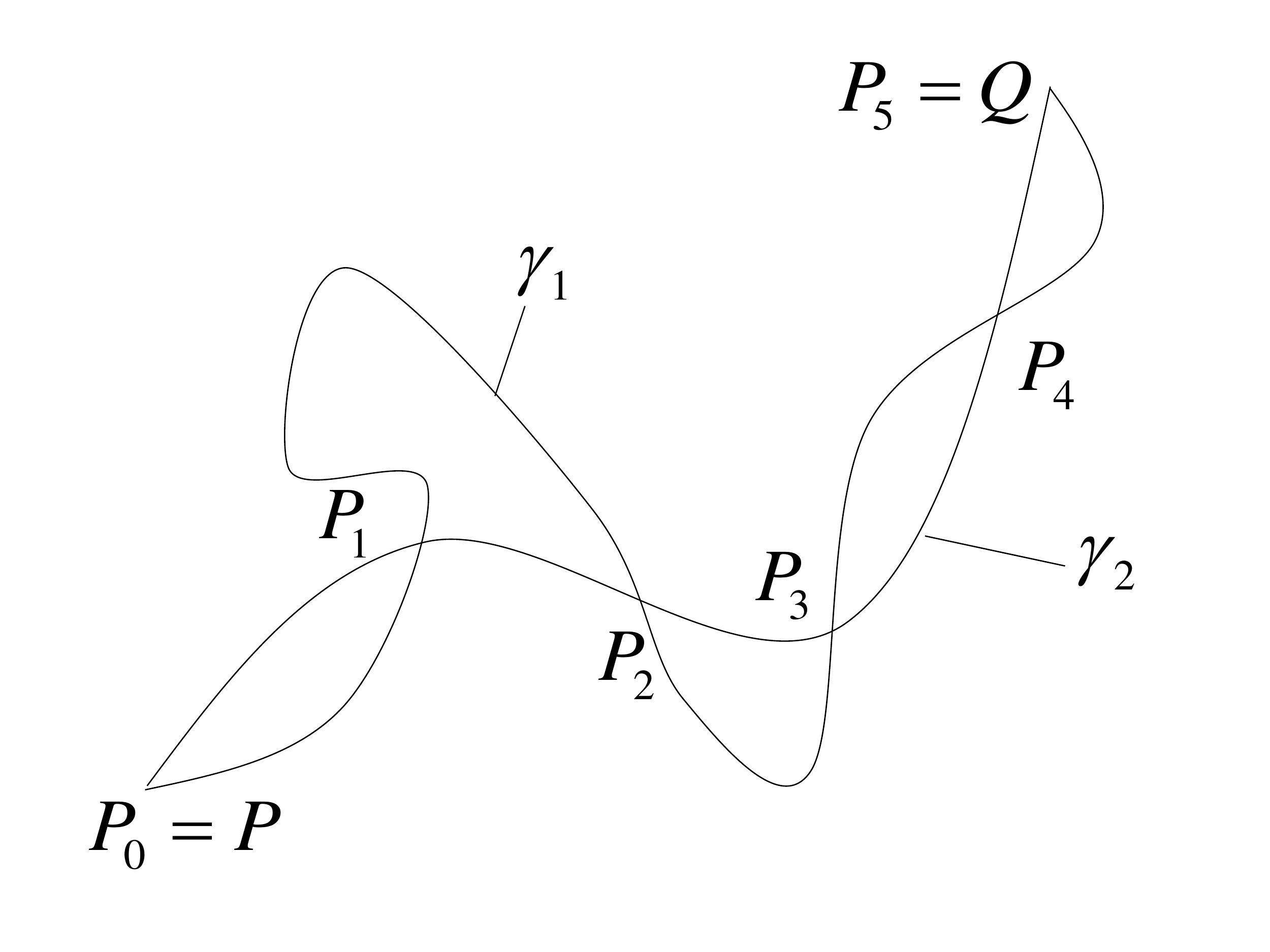}
		\vskip 0pt
		\caption{Example for $SNG[\cdot,\cdot]$}
        \label{fig:sign}
	\end{center}
\end{figure}

Let $A$ and $B$ be two ordered word sets, we write $A\triangleright B$, if $B$ is a sub ordered word set of $A$. For example,
$$
[+\ -]\triangleright B\ \text{for}\ B=[+\ -],\ [+],\ [-],\ \text{but\ not}\ [+\ -]\triangleright[-\ +].
$$
\begin{rem}\label{rem:nothold}
For the curve shortening flow with driving force, even if $\gamma_1(t)$ and $\gamma_2(t)$ satisfy (\ref{eq:meancurpara}) and (\ref{eq:fixbound}), we can not guarantee that for all $t_1<t_2$,
$$
Z[\gamma_1(t_2),\gamma_2(t_2)]\leq Z[\gamma_1(t_1),\gamma_2(t_1)],\ SGN[\gamma(t_2),\gamma(t_2)]\triangleleft SGN[\gamma(t_1),\gamma(t_1)].
$$
\end{rem}

For giving the intersection number principle, we need assume $\gamma_1(t)$ and $\gamma_2(t)$ are homeomorphism to a curve.

\begin{thm}[Intersection number principle]\label{thm:intersection}
For two families of curves $\{\gamma_1(t)\}_{0\leq t\leq T}$ and $\{\gamma_2(t)\}_{0\leq t\leq T}$ satisfying (\ref{eq:meancurpara}) and (\ref{eq:fixbound}), assume there exist a $C^1$ curve $M$ with extreme points $P$, $Q$ and two maps 
$$
\varphi_1,\ \varphi_2:M\times[0,T]\rightarrow\mathbb{R}^2
$$ 
such that 
$$
\gamma_i(t)=\{\varphi_i(P,t)\mid P\in M \},\ i=1,2. 
$$
Then there hold 
$$
Z[\gamma_1(t_2),\gamma_2(t_2)]\leq Z[\gamma_1(t_1),\gamma_2(t_1)],\ SGN[\gamma(t_2),\gamma(t_2)]\triangleleft SGN[\gamma(t_1),\gamma(t_1)],
$$
for all $0\leq t_1<t_2\leq T$.
\end{thm}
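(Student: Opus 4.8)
The plan is to argue by a combination of a time-discretization / continuity argument and a local Sturmian analysis at the moment when the intersection count would drop or a sign pattern would change. First I would reduce the statement to showing that the functions
$$
t\mapsto Z[\gamma_1(t),\gamma_2(t)],\qquad t\mapsto SGN[\gamma_1(t),\gamma_2(t)]
$$
cannot jump upward, respectively cannot acquire a new sub-word. Because both $\gamma_1(t)$ and $\gamma_2(t)$ are parametrized over the same fixed $C^1$ curve $M$ via the maps $\varphi_1,\varphi_2$, I can form the difference map $w(P,t):=\varphi_1(P,t)-\varphi_2(P,t)$ in a tubular neighbourhood of each transversal intersection, or better, introduce a common local graph coordinate near any intersection point $P^*\ne P,Q$ at a time $t_0$: writing both curves locally as graphs $y=u_i(x,t)$ over a short segment, the difference $v:=u_1-u_2$ satisfies a linear uniformly parabolic equation
$$
v_t=a(x,t)v_{xx}+b(x,t)v_x+c(x,t)v
$$
with coefficients coming from linearizing $V=-\kappa+A$ (the driving term $A$ and the curvature nonlinearity are smooth in the slope, so the difference of two solutions solves such an equation with bounded measurable, in fact continuous, coefficients). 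The classical Sturmian theorem of Angenent then says the number of zeros of $x\mapsto v(x,t)$ on the relevant interval is nonincreasing in $t$, and is strictly decreasing whenever a degenerate (multiple) zero occurs in the interior.

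The key steps, in order, would be: (i) establish that away from $P$ and $Q$ all crossings are governed by the local parabolic equation above, so the \emph{interior} intersection count is nonincreasing by Angenent's zero-number theorem; (ii) handle the two endpoints $P,Q$, which are always common points of $\gamma_1(t)$ and $\gamma_2(t)$ — here the issue is whether an interior intersection can "escape" through the boundary, thereby reducing $Z$, which is allowed, or whether a new interior crossing can be "created" near an endpoint, which must be excluded; for this I would use the boundary condition (\ref{eq:fixbound}) together with a Hopf-lemma type argument at $P$ and $Q$, exactly as in the comparison principle (Theorem \ref{thm:comparison}), to control the sign of $v$ and its derivative near the fixed endpoints; (iii) translate the zero-number statement for $v$ into the ordered-word statement for $SGN$: a sign "$+$" between consecutive intersections $P_{i-1},P_i$ corresponds precisely to a sign of $v$ on the corresponding subinterval via the convention in Definition \ref{def:semiorder} and Remark \ref{rem:direction}, so monotonicity of zeros plus the fact that the signs on successive subintervals alternate forces $SGN[\gamma_1(t_2),\gamma_2(t_2)]\triangleleft SGN[\gamma_1(t_1),\gamma_2(t_1)]$; (iv) finally, since both $Z$ and $SGN$ can only change at a discrete set of times (again by Angenent's theorem, multiple zeros occur at isolated times), patch the local-in-time conclusions together to get the inequality for all $0\le t_1<t_2\le T$.

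I would be careful about one technical point raised in Remark \ref{rem:nothold}: the monotonicity is \emph{false} in general for two arbitrary solutions of (\ref{eq:meancurpara})–(\ref{eq:fixbound}), and the extra hypothesis that both families come from a common reference curve $M$ through continuous maps $\varphi_i$ is exactly what rules out the pathology. Concretely, this hypothesis guarantees that the curves remain genuine (non-self-intersecting in the relevant sense) images of $M$, so that "intersection of $\gamma_1(t)$ and $\gamma_2(t)$" really is the zero set of a single function $v(\cdot,t)$ on a fixed domain, with no crossings appearing or disappearing because of a curve folding over itself. I would make this reduction explicit at the start of the proof.

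The main obstacle I expect is the behaviour at the fixed endpoints $P$ and $Q$. In the classical (closed-curve) Sturmian theory there is no boundary, so the only mechanism for losing zeros is an interior multiple zero; here, a crossing can slide into $P$ or $Q$ and vanish, and one has to verify that the opposite cannot happen — no crossing is born at the boundary. This is precisely where the parabolicity, the $C^1$ regularity of $M$, and a Hopf-type boundary estimate must be combined, and it is the step that requires genuine work rather than a citation; the rest is a fairly standard application of Angenent's zero-number theorem together with bookkeeping for the ordered word set.
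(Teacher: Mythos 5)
Your proposal takes essentially the same approach as the paper, which simply states that the result follows from ``the local representation and classical intersection number principle'' and omits all details; your reduction to local graph coordinates, the linear parabolic equation for the difference, Angenent's zero-number theorem, and the Hopf-lemma treatment of the fixed endpoints is precisely the argument the paper is alluding to, worked out in considerably more detail (and with a correct identification of the endpoint behaviour as the only step needing genuine care) than the paper itself provides.
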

Using the arc length parameter $s$ of $M$, we can express $\gamma_i(t)$ by 
$$
\gamma_i(t)=\{\varphi_i(s,t)\mid 0\leq s\leq L \},\ 0\leq t\leq T,\ i=1,2,
$$ 
where $L$ denotes the length of $M$. Using the local representation and classical intersection number principle, we can prove this results easily. We omit the detail.

\begin{defn}\label{def:C1close}
For a $C^1$ curve $\gamma$ and a sequence of $C^1$ curves $\gamma_n$ with extreme points $P$, $Q$, we say $\gamma_n\rightarrow\gamma$ in $C^1$, if 

(1) There exist a $C^1$ curve $M$ with extreme points $P$, $Q$ and maps 
$$
\varphi,\ \varphi_n:M\rightarrow\mathbb{R}^2
$$ 
such that 
$$
\gamma=\{\varphi(P)\mid P\in M \},\ \gamma_n=\{\varphi_n(P)\mid P\in M \}. 
$$

(2) 
$$
\left\Vert \varphi_n-\varphi \right\Vert_{C^1(M\rightarrow \mathbb{R}^2)}\rightarrow 0,
$$
as $n\rightarrow\infty$.
\end{defn}

\section{Time local existence and uniqueness of solution}\large

In this section, we introduce the the transport map first used by \cite{A} and prove Theorem \ref{thm:exist}.

\begin{lem}\label{lem:vectorfield}
For $\Gamma_0$ satisfying the assumption in Theorem \ref{thm:exist}, there exist a shuttle neighbourhood $V$ of $\Gamma_0$ and a vector field $X\in C^1(\overline{V}\rightarrow\mathbb{R}^2)$ such that 
$$
X(z)\cdot n(z) >0,\ z\in \Gamma_0
$$
and in $V$, there holds 
$$
|X|\geq\delta>0, \text{for\ some}\ \delta>0, 
$$
where $n$ denotes the unit upward normal vector of $\Gamma_0$.
\end{lem}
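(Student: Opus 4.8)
The plan is to construct the vector field $X$ by hand as (essentially) a rescaled normal field of $\Gamma_0$, extended off $\Gamma_0$ in a controlled way, and then to build the shuttle neighbourhood $V$ around $\Gamma_0$ small enough that $X$ never vanishes on $\overline{V}$. First I would use the hypothesis $F_0 \in C^2([0,L_0]\to\mathbb{R}^2)$ to get a globally defined unit upward normal $n(z)$ along $\Gamma_0$, which is $C^1$ as a function of arc length. Extending $\Gamma_0$ to the $C^1$ curve $\gamma^*=\Gamma_0^*$ separating $\mathbb{R}^2$ into $\Omega_1,\Omega_2$ as in the discussion before Remark \ref{rem:direction}, one gets a tubular neighbourhood of $\Gamma_0$ in which the nearest-point projection $\pi$ onto $\Gamma_0$ and the signed distance are $C^1$; then $X(z):=n(\pi(z))$ is a $C^1$ vector field defined on a neighbourhood of $\Gamma_0$, with $|X|\equiv 1$ there and $X(z)\cdot n(z)=1>0$ for $z\in\Gamma_0$. (Near the fixed endpoints $P,Q$ one must be slightly careful since $\Gamma_0$ has a boundary; here one extends $\Gamma_0$ a little past $P$ and $Q$ along $\gamma^*$ before taking the projection, so that $\pi$ and hence $X$ remain $C^1$ up to and slightly beyond the endpoints.)

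Next I would choose the shuttle neighbourhood $V$. By Definition \ref{def:shuttlenei} I need curves $\gamma_1\subset\Omega_1$, $\gamma_2\subset\Omega_2$ with $\gamma_1\succ\Gamma_0\succ\gamma_2$ and $\partial V=\gamma_1\cup\gamma_2$. The natural choice is $\gamma_i := \{\,z + (-1)^{i-1}\varepsilon\, n(\pi_0(z))\ :\ z\in\Gamma_0\,\}$ — i.e. push $\Gamma_0$ off itself by $\pm\varepsilon$ in the normal direction — after a small modification near $P$ and $Q$ to bring the endpoints back to $P$ and $Q$ (for instance interpolating the offset linearly to $0$ on short arcs adjacent to the endpoints). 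For $\varepsilon$ small this is an embedded $C^1$ curve on each side, it lies in $\Omega_i$, and the ordering $\gamma_1\succ\Gamma_0\succ\gamma_2$ follows from the construction since the enclosed region is a genuine one-sided collar; conditions (2) of Definition \ref{def:semiorder} at the endpoints are arranged by the interpolation. Shrinking $\varepsilon$ further if necessary, $V$ lies inside the tubular neighbourhood on which $X$ was defined, so $X\in C^1(\overline V\to\mathbb{R}^2)$ and $|X|\equiv 1\geq\delta$ with, say, $\delta=1/2$.

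The one genuine technical point — and the place I expect to spend the most care — is the behaviour at the two fixed endpoints $P$ and $Q$. There $\Gamma_0$ is only a curve-with-boundary, so the nearest-point projection $\pi$ is not a priori $C^1$ across $P$ and $Q$, and the naive normal offset $\gamma_i$ would have endpoints at $P\pm\varepsilon n$, not at $P,Q$; moreover Definition \ref{def:shuttlenei} (through Definition \ref{def:semiorder}(2)) imposes a nondegeneracy condition on the tangent directions at the endpoints. Both issues are handled simultaneously by first extending $\Gamma_0$ to a slightly longer $C^1$ arc $\widetilde{\Gamma}_0\subset\gamma^*$ (possible since $F_0$ is $C^2$, hence $C^1$ up to the boundary, and $\gamma^*$ was constructed $C^1$), defining $\pi$ and $X$ via $\widetilde\Gamma_0$, and then cutting off the normal displacement to zero on short sub-arcs of $\Gamma_0$ near $P$ and $Q$ using a $C^1$ bump function; this forces $\gamma_1(0)=\gamma_2(0)=P$, $\gamma_1(L)=\gamma_2(L)=Q$ while keeping everything $C^1$, and one checks that for small $\varepsilon$ the resulting $\gamma_i$ are still embedded, lie in $\Omega_i$, and satisfy $\gamma_1\succ\Gamma_0\succ\gamma_2$. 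Once this local-at-the-endpoints construction is in place, the estimates $X\cdot n>0$ on $\Gamma_0$ and $|X|\geq\delta$ on $V$ are immediate from $|X|\equiv 1$, and the lemma follows.
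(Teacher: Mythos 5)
Your construction is essentially the paper's: there one extends $\Gamma_0$ to a $C^2$ curve $\Gamma_0^*$ dividing the plane into $\Omega_1,\Omega_2$, sets $X=\nabla d$ for the signed distance $d$ on a tubular neighbourhood of $\Gamma_0^*$ (which is exactly your $X(z)=n(\pi(z))$), and chooses $\Gamma_1\succ\Gamma_0\succ\Gamma_2$ inside that neighbourhood to form $V$, giving $|X|\equiv1$ and $X\cdot n=1$ on $\Gamma_0$. The one adjustment to make in your write-up is to take the extension $\widetilde{\Gamma}_0$ to be $C^2$ rather than merely $C^1$ (as the paper does), since $C^1$ regularity of $X=n\circ\pi=\nabla d$ across the endpoints requires a $C^2$ curve; your extra detail on building $\gamma_1,\gamma_2$ by cut-off normal offsets is fine and merely fills in what the paper leaves implicit.
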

\begin{proof}
We extend $\Gamma_0$ by $\Gamma_0^*$ such that $\Gamma_0^*$ is a $C^2$ curve and divide $\mathbb{R}^2$ into two connect parts $\Omega_1$ and $\Omega_2$. Assume $\Omega_1$($\Omega_2$) locates in the left(right) side of $\Gamma_0^*$(``left side'' and ``right side'' are defined as in Section 2). 

Let $d(x)$ be the signed distance function defined as following:
$$
d(x)=d(x,\Omega_2)-d(x,\Omega_1),\ x\in\mathbb{R}^2.
$$ 
Since $\Gamma_0^*$ is $C^2$, as we know, there exists a tubular neighbourhood $U$ of $\Gamma_0^*$ such that $d$ is $C^2$ in $U$. Moreover, there exists a projection map $P$ such that for all $z\in U$ there exists a unique point $z^*\in \Gamma_0^*$ such that 
$$
P z=z^*
$$
and $\nabla d(z)=\nabla d(z^*)=n(z^*)$. We choose two curves $\Gamma_1,\Gamma_2\subset U$ and $\Gamma_i\subset \Omega_i$, $i=1,2$, such that $\Gamma_1\succ\Gamma_0\succ\Gamma_2$. Let $V$ be the domain satisfying $\partial V=\Gamma_1\cup\Gamma_2$ and $X(z)=\nabla d(z)$. Obviously 
$$
|X|(z)=1,\ z\in V
$$
and 
$$
X(z)\cdot n(z)=1,\ z\in \Gamma_0.
$$
\end{proof}

{\bf Transport map} Let $\phi:\Gamma_0\times(-\delta,\delta)\rightarrow V$ be the map generated by vector field $X$, precisely,
$$
\left\{\begin{array}{lcl}
\frac{d}{d\alpha}\phi(P,\alpha)=X(\phi),\ P\in \Gamma_0, \\
\sigma(P,0)=P,\ P\in\Gamma_0.
\end{array}
\right.
$$
Recalling $\Gamma_0=\{F_0(s)\mid 0\leq s\leq L_0\}$ and $F_0(s)\in C^2([0,L_0]\rightarrow\mathbb{R}^2)$, let 
$$
\psi(s,\alpha)=\phi(F_0(s),\alpha).
$$ 
Seeing the assumption of $F_0$ and $X$, $\psi_s$, $\psi_{\alpha}$, $\psi_{ss}$, $\psi_{s\alpha}$, $\psi_{\alpha\alpha}$ are all continuous vectors for $0\leq s\leq L_0$, $-\delta<\alpha<\delta$.

If $\Gamma(t)\subset V$ is $C^1$ close to $\Gamma_0$ and satisfies (\ref{eq:meancurpara}), (\ref{eq:fixbound}), $0<t<T$ with initial data $\Gamma(0)=\Gamma_0$, then there exists a function $u(\cdot,t):[0,L_0]\rightarrow\mathbb{R}$ such that 
$$
\Gamma(t)=\{\psi(s,u(s,t))\mid 0\leq s\leq L_0\}.
$$
Moreover, $u$ satisfies
\begin{equation}\label{eq:localpara}
\left\{
\begin{array}{lcl}
\dis{u_t=\frac{1}{|\psi_s+\psi_{\alpha}u_s|^2}u_{ss}+\frac{\det(\psi_s+\psi_{\alpha}u_s,\psi_{ss}+2u_s\psi_{s\alpha}+\psi_{\alpha\alpha}u_s^2)}{\det(\psi_s,\psi_{\alpha})|\psi_s+\psi_{\alpha}u_s|^2}+A\frac{|\psi_{s\alpha+\psi_{\alpha}u_s}|}{\det(\psi_s,\psi_{\alpha})}},\\
0<s<L_0,\ 0<t<T,\\
u(0,t)=u(L_0,t)=0,\ 0\leq t<T\\
u(s,0)=0,\ 0\leq s\leq L_0
\end{array}
\right.
\end{equation}
where $\det(\cdot,\cdot)$ denotes the determinant. Indeed, the upward normal velocity 
$$
V=\frac{\det(\psi_s,\psi_{\alpha})u_t}{|\psi_s+\psi_{\alpha}u_s|}
$$
and the curvature 
$$
\kappa=\frac{\det(\psi_s,\psi_{\alpha})}{|\psi_s+\psi_{\alpha}u_s|^3}u_{ss}+\frac{\det(\psi_s+\psi_{\alpha}u_s,\psi_{ss}+2u_s\psi_{s\alpha}+\psi_{\alpha\alpha}u_s^2)}{|\psi_s+\psi_{\alpha}u_s|^3}.
$$

Following Proposition \ref{pro:localexist} implies Theorem \ref{thm:exist}.

\begin{prop}\label{pro:localexist}
There exist $T_0>0$ and a unique $u\in C([0,L_0]\times[0,T_0))\cap C^{2+\alpha,1+\alpha/2}([0,L_0]\times(0,T_0))$ such that $u$ satisfies (\ref{eq:localpara}) for $T=T_0$.
\end{prop}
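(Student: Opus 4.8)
The plan is to read (\ref{eq:localpara}) as a quasilinear, uniformly parabolic initial--boundary value problem for the scalar unknown $u$ on the fixed interval $[0,L_0]$, and to produce the solution by a contraction--mapping argument in parabolic Hölder spaces. Write the equation as $u_t=a(s,u,u_s)\,u_{ss}+b(s,u,u_s)$ with
$$
a(s,\alpha,p)=\frac{1}{|\psi_s(s,\alpha)+p\,\psi_\alpha(s,\alpha)|^{2}},
$$
$b$ collecting the lower--order terms. First I would check uniform parabolicity near the initial datum: at $t=0$ one has $u\equiv0$ and $u_s\equiv0$, hence $a(s,0,0)=|\psi_s(s,0)|^{-2}=|F_0'(s)|^{-2}=1$ since $s$ is the arclength parameter, while $\det\!\big(\psi_s(s,0),\psi_\alpha(s,0)\big)=\det\!\big(F_0'(s),X(F_0(s))\big)\neq0$ because $X\cdot n>0$ on $\Gamma_0$ by Lemma \ref{lem:vectorfield}. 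Thus there are $\rho,\lambda>0$ with $\lambda\le a(s,\alpha,p)\le\lambda^{-1}$ and $|\det(\psi_s,\psi_\alpha)|\ge\lambda$ whenever $|\alpha|+|p|\le\rho$; this is the region in which we work. I would also record the compatibility status at the corners $(0,0)$ and $(L_0,0)$: since $u_0\equiv0$ and the lateral data vanish, zeroth--order compatibility holds, but the first--order condition $a(s,0,0)\,\partial_{ss}u_0(s)+b(s,0,0)=0$ at $s=0,L_0$ generically fails — its left side being the normal speed $-\kappa+A$ of $\Gamma_0$ at the endpoints. This is exactly why one obtains $u\in C([0,L_0]\times[0,T_0))$ but must confine the full $C^{2+\alpha,1+\alpha/2}$ regularity to positive times.

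For the construction, fix $\alpha\in(0,1)$ and, for small $R,T>0$, let $\mathcal X_{R,T}$ be the closed ball of radius $R$ in $C^{1+\alpha,(1+\alpha)/2}([0,L_0]\times[0,T])$ consisting of functions that vanish on the parabolic boundary. Given $v\in\mathcal X_{R,T}$, let $\Phi(v)=u$ solve the linear problem
$$
u_t=a(s,v,v_s)\,u_{ss}+b(s,v,v_s),\qquad u(0,t)=u(L_0,t)=0,\qquad u(s,0)=0,
$$
which has a unique solution, lying in $C^{2+\alpha,1+\alpha/2}$ for $t>0$, by linear parabolic Schauder theory once the frozen coefficients are in $C^{\alpha,\alpha/2}$ and zeroth--order compatibility holds. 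Shrinking $T$ and fixing $R$ in terms of the data, one checks $\Phi(\mathcal X_{R,T})\subset\mathcal X_{R,T}$ and that $\Phi$ is a strict contraction: for $v_1,v_2\in\mathcal X_{R,T}$, $\Phi(v_1)-\Phi(v_2)$ solves a linear uniformly parabolic problem with vanishing initial and lateral data and right--hand side bounded by $C(T)\,\|v_1-v_2\|_{C^{1+\alpha,(1+\alpha)/2}}$ with $C(T)\to0$ as $T\to0$, using the Lipschitz dependence of $a,b$ on $(\alpha,p)$, the Schauder estimate, and the vanishing of the data. The unique fixed point $u=\Phi(u)$ then lies in $C([0,L_0]\times[0,T_0))\cap C^{2+\alpha,1+\alpha/2}([0,L_0]\times(0,T_0))$ and solves (\ref{eq:localpara}); through the transport map this yields Theorem \ref{thm:exist}. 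When $F_0$ is assumed only $C^2$, so that the part of $b$ built from $\psi_{ss},\psi_{s\alpha},\psi_{\alpha\alpha}$ is merely continuous in $(s,\alpha)$, one first carries this out for a smooth approximation $F_0^{(n)}\to F_0$ in $C^2$ and passes to the limit using uniform interior estimates, the limit being singled out by uniqueness.

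For uniqueness, if $u^{(1)},u^{(2)}$ both solve (\ref{eq:localpara}) on a common interval, then $w=u^{(1)}-u^{(2)}$ satisfies a linear, uniformly parabolic equation $w_t=\bar a\,w_{ss}+\bar b\,w_s+\bar c\,w$ with bounded coefficients — obtained by applying the mean value theorem to $a$ and $b$ along the segment joining $(u^{(1)},u^{(1)}_s)$ to $(u^{(2)},u^{(2)}_s)$ — with zero initial and boundary data, so $w\equiv0$ by the maximum principle; uniqueness of the flow also follows from the comparison principle, Theorem \ref{thm:comparison}, applied to the two evolutions of $\Gamma_0$ (each being simultaneously a sub-- and a super--solution of (\ref{eq:meancurpara})--(\ref{eq:fixbound})).

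I expect the contraction estimate to be the main obstacle, specifically extracting smallness from the short time span: although $v(\cdot,0)\equiv0$ forces $\|v(\cdot,t)\|_{C^1}\to0$ as $t\to0$, the spatial Hölder seminorm $[v_s(\cdot,t)]_{C^\alpha}$ of an element of $\mathcal X_{R,T}$ need not be small, so one must argue — by interpolation against the vanishing initial value, or by equipping the space with a suitable power--of--$t$ weight — both that the frozen arguments stay inside the parabolicity region $\{|\alpha|+|p|\le\rho\}$ on $[0,L_0]\times[0,T]$ and that the product of the Schauder constant with the Lipschitz factor of $a,b$ is strictly below $1$ for $T$ small. By comparison, the verification of uniform parabolicity and the assembly of the coefficient regularity out of $\psi$ is routine given Lemma \ref{lem:vectorfield} and $F_0\in C^2$.
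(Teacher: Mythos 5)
Your proposal is correct and follows essentially the same route as the paper: the paper's own proof consists precisely of verifying the non-degeneracy $|\det(\psi_s,\psi_{\alpha})|(s,\alpha)>\delta_1$ for $|\alpha|<\alpha_0$ (using $\psi_s(s,0)\cdot\psi_{\alpha}(s,0)=0$) and then invoking the quasilinear parabolic theory of \cite{LSU} to get a solution with $|u|\le\alpha_0$, with uniqueness dismissed as easy. Your contraction-mapping construction, the compatibility discussion at the corners, and the approximation argument handling the merely continuous lower-order coefficients coming from $\psi_{ss}$, $\psi_{s\alpha}$, $\psi_{\alpha\alpha}$ are exactly the details the paper delegates to that citation.
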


\begin{proof}
Since $\psi_{s}(s,0)\cdot\psi_{\alpha}(s,0)=0$, $0\leq s\leq L_0$, then
$$
|\det(\psi_s,\psi_{\alpha})|(s,0)=1,\ 0\leq s\leq L_0.
$$
There exist $\delta_1>0$ and $\alpha_0$ such that for all $-\alpha_0<\alpha<\alpha_0$ and $0\leq s\leq L_0$,
$$
|\det(\psi_s,\psi_{\alpha})|(s,\alpha)>\delta_1.
$$
By the quasi-linear parabolic theory in \cite{LSU}, we can deduce there exist $T_0$ and $u\in C([0,L_0]\times[0,T_0))\cap C^{2+\alpha,1+\alpha/2}([0,L_0]\times(0,T_0))$ such that $u$ satisfies (\ref{eq:localpara}) and $|u|\leq \alpha_0$, $0\leq t<T_0$.
For the uniqueness, since $\psi_s$, $\psi_{\alpha}$, $\psi_{ss}$, $\psi_{s\alpha}$, $\psi_{\alpha\alpha}$ are all continuous vectors for $0\leq s\leq L_0$, $-\alpha_0<\alpha<\alpha_0$, the unique result can be got easily.
\end{proof}

\begin{rem}\label{rem:weak}
The assumption for initial curve can be weakened. In this paper, we assume $F_0(s)\in C^2([0,L_0]\rightarrow\mathbb{R}^2)$. Indeed, the initial curve can be assumed to be Lipschitz continuous. Recently, \cite{M} has considered the curve-shortening flow with Lipschitz initial curve, under the Neumann boundary condition. Since the purpose of this paper is to get the three categories of solutions, we do not introduce this part in detail.
\end{rem}

\begin{lem}\label{lem:curvaturee}
For $\Gamma(t)$ satisfying (\ref{eq:meancurpara}), (\ref{eq:fixbound}), for $0<t<T$, then the curvature $\kappa(s,t)$ satisfies
\begin{equation}\label{eq:curvaturee}
\left\{
\begin{array}{lcl}
\kappa_t=\kappa_{ss}-\kappa\kappa_s^2+\kappa^2(\kappa-A),\ 0<s<L(t),\ 0<t<T\\
\kappa(0,t)=A,\ \kappa(L(t),t)=A, 0<t<T,
\end{array}
\right.
\end{equation}
where $\kappa_t$ denotes the derivative of $t$ by fixing $s$.
\end{lem}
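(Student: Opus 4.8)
The plan is to prove this by the standard Frenet-frame computation for the evolution of the curvature, carried out carefully with respect to the parametrization. First I would fix the arclength parametrization $\Gamma(t)=\{F(s,t)\mid 0\le s\le L(t)\}$ with $F(0,t)\equiv P$, write $T=F_s$ for the unit tangent and $N$ for the unit downward normal, and recall the Frenet relations $F_{ss}=\partial_s T=\kappa N$ and $\partial_s N=-\kappa T$ (these fix the sign of $\kappa$ as in (\ref{eq:meancurpara})). The motion law (\ref{eq:meancurpara}) prescribes only the normal part of $\partial_t F$; requiring that $s$ stay equal to arclength while $F(0,t)$ stays equal to $P$ forces a definite tangential part, so I would write $\partial_t F=(\kappa-A)N+gT$ with $g(0,t)=0$, and obtain $g_s$ algebraically in terms of $\kappa$ by differentiating $|F_s|^2\equiv1$ in $t$. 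From this one also reads off $\partial_t T$ and $\partial_t N$ (each orthogonal to a unit vector, hence a single scalar multiple of the other) and the commutator of $\partial_t$ with $\partial_s$ on functions, which along the flow is a first-order operator whose coefficient is built from $\kappa$ and $A$ (the analogue of $\partial_t v=-\kappa(\kappa-A)v$ for the speed $v$ in a Lagrangian parametrization).

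Then, \textbf{for the boundary condition}, I would differentiate $F(0,t)\equiv P$ and $F(L(t),t)\equiv Q$ in $t$ and project onto the normal direction; the tangential contributions (in particular the term $F_s\,L'(t)$ for the second identity) drop out, leaving $(\kappa-A)N=0$ at each endpoint, so $\kappa(0,t)=\kappa(L(t),t)=A$ because $N$ is a unit vector. (The tangential parts of the same identities would then determine $L'(t)$, which the statement does not need.)

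\textbf{For the interior equation}, I would differentiate $\kappa=F_{ss}\cdot N=(\partial_s T)\cdot N$ in $t$, commuting $\partial_t$ past the two arclength derivatives. The term $(\partial_s T)\cdot\partial_t N$ vanishes by orthogonality; the Frenet relation $F_{ss}=\kappa N$ meeting the curvature term produces the cubic $\kappa^3$; the driving force contributes $-A\kappa^2$; and the commutator together with the tangential coefficient $g$ contributes the remaining lower-order term. Collecting everything gives the claimed semilinear parabolic equation $\kappa_t=\kappa_{ss}-\kappa\kappa_s^2+\kappa^2(\kappa-A)$. All of these manipulations are legitimate because, by Proposition~\ref{pro:localexist} and interior parabolic regularity, $\Gamma(t)$ is smooth for $0<t<T$, while the $C^2$ regularity up to the endpoints available near $t=0$ is enough to make sense of the boundary values; alternatively, the same equation can be obtained---at the price of a much longer computation---by differentiating the explicit curvature formula written just before Proposition~\ref{pro:localexist} and inserting the PDE (\ref{eq:localpara}) satisfied by $u$.

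\textbf{The main obstacle} I anticipate is not any single identity but the bookkeeping of the parametrization: one has to commit to a parametrization (arclength of $\Gamma(t)$ based at $P$, or the transport coordinate of Section 3), correctly account for the induced tangential velocity and for the failure of $\partial_t$ and $\partial_s$ to commute, and keep the base-point and sign conventions consistent throughout so that the boundary data come out exactly as $\kappa\equiv A$ and the lower-order coefficient comes out with the correct sign. It is precisely these reparametrization contributions that produce the first-order term in the equation, so they must be handled with care.
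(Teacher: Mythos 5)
Your approach is essentially the paper's: for the interior equation the paper simply cites \cite{GMSW} for the very Frenet-frame computation you sketch (tangential correction, commutator of $\partial_t$ and $\partial_s$, and all), and it handles the boundary condition exactly as you do, by noting that the fixed endpoints have zero velocity so that $(\kappa-A)N=0$ there, i.e.\ $\kappa=A$. No gap.
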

For the proof of the first equation, the calculation can be seen in \cite{GMSW}. Since at the extreme points, $\Gamma(t)$ does not move, the boundary condition is obvious.

\begin{lem}\label{lem:regular}
For $\sigma>0$, $\Gamma_{\sigma}(t)$ given in Theorem \ref{thm:category}, let $F_{\sigma}(s,t)$ satisfy
$$
\Gamma_{\sigma}(t)=\{F_{\sigma}(s,t)\mid0\leq s\leq L_{\sigma}(t)\},
$$
where $L_{\sigma}(t)$ is the length of $\Gamma_{\sigma}(t)$. If $\frac{d}{ds} F_{\sigma}(0,t)\cdot(0,1)>0$, for all $0\leq t\leq t_0$, then $t_0<T_{\sigma}$.
\end{lem}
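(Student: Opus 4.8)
The plan is to argue by contradiction from the maximality of $T_\sigma$. Suppose $T_\sigma\le t_0$. Then $\Gamma_\sigma(t)$ is defined and smooth for $0\le t<T_\sigma$, and the hypothesis gives $\frac{d}{ds}F_\sigma(0,t)\cdot(0,1)>0$ there. Since the flow (\ref{eq:meancurpara}) and the boundary condition (\ref{eq:fixbound}) are invariant under the reflection $x\mapsto-x$ (which interchanges $P$ and $Q$) and $\Gamma_\sigma$ is even, the uniqueness part of Theorem \ref{thm:exist} forces $\Gamma_\sigma(t)$ to be symmetric about the $y$-axis for every $t$; in particular the companion bound $-\frac{d}{ds}F_\sigma(L_\sigma(t),t)\cdot(0,1)>0$ holds at the right endpoint. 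Because $t\mapsto\frac{d}{ds}F_\sigma(0,t)$ is continuous and has strictly positive second component on the compact interval $[0,t_0]\supseteq[0,T_\sigma]$, there is a constant $c_0\in(0,1]$ with
$$
\frac{d}{ds}F_\sigma(0,t)\cdot(0,1)\ \ge\ c_0,\qquad -\,\frac{d}{ds}F_\sigma(L_\sigma(t),t)\cdot(0,1)\ \ge\ c_0,\qquad 0\le t<T_\sigma .
$$
The aim is to promote this to uniform $C^{2+\alpha,1+\alpha/2}$ control of $\Gamma_\sigma(t)$ on $[0,T_\sigma)$, then restart the flow at $T_\sigma$ and contradict maximality.

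The first ingredient is a trapping estimate: comparing $\Gamma_\sigma(t)$ from above and from below with suitable sub- and super-solutions of (\ref{eq:meancurpara}), (\ref{eq:fixbound}) — for instance large circular arcs through $P$ and $Q$, or $C^1$ perturbations of the stationary solutions $\Gamma_*$ and $\Gamma^*$ — and invoking the comparison principle (Theorem \ref{thm:comparison}) gives a bounded set $K\subset\mathbb{R}^2$ with $\Gamma_\sigma(t)\subset K$ for $0\le t<T_\sigma$ (the trapping region may grow in $t$, but since $T_\sigma<\infty$ its union over $t<T_\sigma$ is bounded). The second, decisive, ingredient is boundary regularity, and this is where the tangent condition enters: the bound $c_0$ on the vertical component of the unit tangent at $P$ says that in a fixed neighbourhood of $P$ the curve $\Gamma_\sigma(t)$ is, uniformly in $t$, a graph $x=g(y,t)$ with $|g_y|\le\sqrt{1-c_0^2}/c_0$ and $g(0,t)=-a$; in these coordinates (\ref{eq:meancurpara}) is a uniformly parabolic quasilinear equation for $g$ with a fixed Dirichlet condition at $y=0$, so parabolic Schauder estimates up to the boundary bound $g$, hence the curvature near $P$, uniformly in $t<T_\sigma$ (consistently with $\kappa=A$ at $P$, cf.\ Lemma \ref{lem:curvaturee}); the symmetric statement holds near $Q$. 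Had the tangent become horizontal this local graph, and the uniform parabolicity of its equation, would have degenerated at the boundary — precisely the mechanism the hypothesis excludes.

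It remains to bound the curvature away from $P$ and $Q$. There $\Gamma_\sigma(t)$ is a smooth embedded arc in $K$, and the number of its intersections with any fixed reference curve is non-increasing by the intersection number principle (Theorem \ref{thm:intersection}); tracking intersections with vertical and horizontal lines one shows $\Gamma_\sigma(t)$ stays graphical with gradient controlled by the endpoint tangents, and then the interior curvature estimate for the forced flow, via the evolution equation (\ref{eq:curvaturee}) for $\kappa$, gives $|\kappa|\le C$ in the interior, uniformly in $t<T_\sigma$. Combining interior and boundary bounds yields $|\kappa|\le C$ on all of $\Gamma_\sigma(t)$ for $0\le t<T_\sigma$; parabolic bootstrapping upgrades this to uniform $C^{2+\alpha,1+\alpha/2}$ estimates, so $\Gamma_\sigma(t)\to\Gamma_\sigma(T_\sigma)$ in $C^2$, a regular $C^2$ curve with endpoints $P$, $Q$. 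Applying Proposition \ref{pro:localexist} with $\Gamma_\sigma(T_\sigma)$ as initial curve (building a fresh transport map around it) produces a solution on $[T_\sigma,T_\sigma+\varepsilon)$ which, by uniqueness, prolongs $\Gamma_\sigma$ past $T_\sigma$, contradicting maximality; hence $t_0<T_\sigma$.

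I expect the main obstacle to be the interior curvature bound combined with carrying the estimates all the way up to the fixed endpoints in one coherent scheme: ruling out an interior curvature singularity of the curve-shortening flow with driving force, knowing only that the curve is embedded and bounded, is the genuinely technical point (one wants to exploit graphicality, which itself is extracted from the tangent condition and the intersection number principle), and near $P$ and $Q$ one must choose the local coordinate with care — the tangent is permitted to approach the vertical, only not the horizontal — so that the graph $x=g(y,t)$ has bounded gradient and the equation stays uniformly parabolic up to the boundary.
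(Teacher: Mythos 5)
Your overall scheme (uniform curvature control up to $T_\sigma$, then restart the flow at $T_\sigma$ to contradict maximality) is the same as the paper's, but two of your key steps fail, and both failures trace back to the one structural fact the paper's proof actually rests on and you never use: preserved convexity. First, the uniform bound $c_0$ is not available. Under the contradiction hypothesis $T_\sigma\le t_0$ the flow is only defined on the half-open interval $[0,T_\sigma)$, so $t\mapsto\frac{d}{ds}F_\sigma(0,t)\cdot(0,1)$ is a continuous positive function on a non-compact set and its infimum may be $0$ as $t\uparrow T_\sigma$; invoking compactness of $[0,t_0]$ presupposes the solution (and its tangent) extends to $t=T_\sigma$, which is what you are trying to prove. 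Since your entire boundary argument (the graph $x=g(y,t)$ with $|g_y|\le\sqrt{1-c_0^2}/c_0$ and its uniform parabolicity) hinges on $c_0$, it collapses with it. Second, the interior estimate is unsupported: a static vertical or horizontal line is not a solution of $V=-\kappa+A$ (a line has $\kappa=0$, hence $V=A\neq0$), so Theorem \ref{thm:intersection} does not let you count intersections with such lines, and graphicality does not follow; and the evolution equation (\ref{eq:curvaturee}) cannot by itself yield $|\kappa|\le C$, because the zeroth-order term $\kappa^2(\kappa-A)$ gives $\frac{d}{dt}\kappa_{\max}\le\kappa_{\max}^2(\kappa_{\max}-A)$, which permits finite-time blow-up; some additional geometric input is indispensable.

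The paper supplies that input as follows. For $\sigma>0$ the initial curve is the concave graph $\sigma\varphi$, so $\kappa\ge0$ at $t=0$, and Lemma \ref{lem:curvaturee} together with the maximum principle preserves $\kappa>0$ for $0<t<T_\sigma$. Combining this convexity with the (merely pointwise) tangent condition and the reflection symmetry, one shows every ray from the origin meets $\Gamma_\sigma(t)$ exactly once: a second intersection would produce a ray touching the curve tangentially from above and hence a point of nonpositive curvature. Thus $\Gamma_\sigma(t)$ is a polar graph $\rho_\sigma(\theta,t)$ solving (\ref{eq:polarco}); comparison with the sub-solution $\Lambda_0$ and the finiteness of $T_\sigma$ give $0<\rho_1\le\rho_\sigma\le\rho_2$, the equation is uniformly parabolic in this single global coordinate, and the quasilinear theory of \cite{LSU} yields the uniform $C^2$ bound that allows continuation past $T_\sigma$. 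Convexity is what simultaneously produces the global graph coordinate and excludes interior singularities; without it (or a substitute), your proposal does not close.
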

This lemma gives a sufficient condition under which $\Gamma_{\sigma}(t)$ does not become singular. The assumption $\frac{d}{ds} F_{\sigma}(0,t)\cdot(0,1)>0$ means that the $y$-component of the tangential vector $\frac{d}{ds} F_{\sigma}(0,t)$ is positive.

\begin{proof}
Seeing the choice of $\Gamma_{\sigma}(0)$, then $\kappa_{\sigma}(s,0)\geq0$, $0\leq s\leq L_{\sigma}(0)$, for $\sigma>0$. Combining Lemma \ref{lem:curvaturee} and maximum principle, $\kappa_{\sigma}(s,t)>0$, $0<s<L(t)$, $0<t<T_{\sigma}$.

If $T_{\sigma}=\infty$, the result is trivial. We assume $T_{\sigma}<\infty$. 

We prove the result by contradiction, assuming $t_0\geq T_{\sigma}$. We claim that every half-line given by
$$
y=kx,\ y\geq0,\ \text{or}\ x=0,\ y\geq 0
$$
intersects $\Gamma_{\sigma}(t)$ only once, $0<t<T_{\sigma}$.

First, for all $0<t<T_{\sigma}$, we prove $x=0$, $y\geq0$ intersects $\Gamma_{\sigma}(t)$ only once. If not, suppose that there exists $t_1<T_{\sigma}$ such that $x=0$, $y\geq0$ intersects $\Gamma_{\sigma}(t_1)$ more than once. Since $\Gamma_{\sigma}(t)$ is symmetric about $y$-axis, it is easy to see that $\Gamma_{\sigma}(t)$ becomes singular at $t_1$. This contradicts to $t_1<T_{\sigma}$.

Next, by contradiction, assume that there exist $t_2<T_{\sigma}$ and $k<0$ such that $y=kx,\ y\geq0$ intersects $\Gamma_{\sigma}(t_2)$ more than once. Combining our assumption $\frac{d}{ds} F_{\sigma}(0,t)\cdot(0,1)>0$, we can choose $k_0$ satisfying $k_0<k<0$ such that half-line $y=k_0x,\ y\geq0$ intersects $\Gamma_{\sigma}(t_2)$ tangentially at some point $P^*$ and near $P^*$, $\Gamma_{\sigma}(t_2)$ is located under the half-line. It is easy to deduce that the curvature at $P^*$, $\kappa_{\sigma}(P^*,t_2)\leq0$. This contradicts to that the curvatures on $\Gamma_{\sigma}(t)$ are all positive, $0<t<T_{\sigma}$. Here we complete the proof of claim.
\begin{figure}[htbp]
	\begin{center}
            \includegraphics[height=6.0cm]{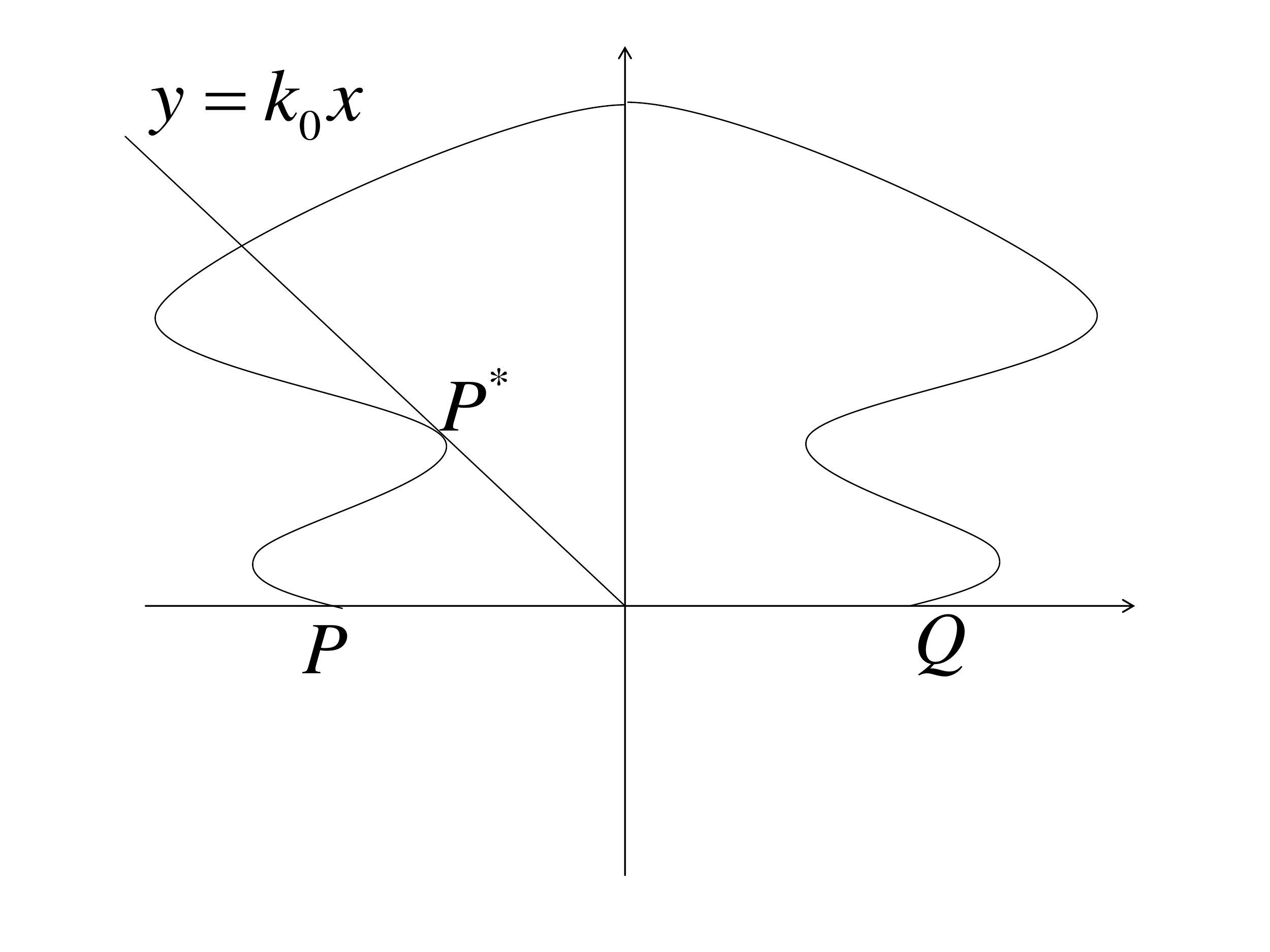}
		\vskip 0pt
		\caption{Proof of claim}
        \label{fig:polarcoordinate}
	\end{center}
\end{figure}

Seeing $\frac{d}{ds} F_{\sigma}(0,t)\cdot(0,1)>0$ and the claim above, $\Gamma_{\sigma}(t)\subset \{(x,y)\mid y\geq 0\}$, $t<T_{\sigma}$. The claim implies that we can express $\Gamma_{\sigma}(t)$ by polar coordinate. For $(x,y)\in \Gamma_{\sigma}(t)$, let
$$
\left\{
\begin{array}{lcl}
x=\rho_{\sigma}(\theta,t)\cos\theta,\\
y=\rho_{\sigma}(\theta,t)\sin\theta,
\end{array}
\right.
$$
for $0\leq\theta\leq \pi$, $0\leq t<T_{\sigma}$. Consequently, $\rho_{\sigma}$ satisfies
\begin{equation}\label{eq:polarco}
\left\{
\begin{array}{lcl}
\dis{\rho_{t}=\frac{\rho_{\theta\theta}}{\rho^2+\rho^2_{\theta}}-\frac{2\rho^2_{\theta}+\rho^2}{\rho(\rho^2_{\theta}+\rho^2)}+\frac{1}{\rho}A\sqrt{\rho^2_{\theta}+\rho^2}},\ 0<\theta<\pi,\ 0<t<T_{\sigma},\\
\rho(0,t)=a,\ \rho(\pi,t)=a,\ 0\leq t<T_{\sigma},
\end{array}
\right.
\end{equation}
recalling $P=(-a,0)$, $Q=(a,0)$. 

Since for $\sigma>0$, $\Gamma_{\sigma}(0)\succ \Lambda_0=\{(x,y)\mid y=0,\ -a\leq x\leq a\}$. It is easy to see that $\Lambda_0$ is a sub-solution of (\ref{eq:meancurpara}) and (\ref{eq:fixbound}). By comparison principle, $\Gamma_{\sigma}(t)\succ\Lambda_0$ for $t<T_{\sigma}$. This implies that there exist $t_3>0$ and $\rho_1>0$ such that $\rho_{\sigma}(\theta,t)\geq \rho_1$ for $t_3<t<T_{\sigma}$, $0\leq \theta\leq \pi$. On the other hand, since $T_{\sigma}<\infty$, there exists $\rho_2>0$ such that $\rho_{\sigma}(\theta,t)\leq \rho_2$ for $0<t<T_{\sigma}$, $0\leq \theta\leq \pi$. Therefore, the quasilinear theory in \cite{LSU} shows that for $\epsilon>0$, there exists $C_{\epsilon}$ such that 
$$
\left\Vert \rho_{\sigma}(\cdot,t)\right\Vert_{C^2[0,\pi]}\leq C_{\epsilon},\ t_3+\epsilon<t<T_{\sigma}.
$$ 
Therefore the curvature of $\Gamma_{\sigma}(t)$ is uniform bounded for $t$ close to $T_{\sigma}$. This implies that the solution $\Gamma_{\sigma}(t)$ can be extended over time $T_{\sigma}$. This contradicts to that $T_{\sigma}$ is the maximal existence time.
\end{proof}

\begin{lem}[Continuous dependence on the initial curve]\label{lem:localconti}
Assume $\rho$ and $\rho_n$ are the solution of (\ref{eq:polarco}) for $0\leq \theta\leq \pi$, $0<t<T$. If $\rho$ is bounded from below for some positive constant and
$$
\lim\limits_{n\rightarrow\infty}\left\Vert\rho_{n}(\cdot,0)-\rho(\cdot,0) \right\Vert_{C^1[0,\pi]}=0,
$$
then for all $0<t<T$,
$$
\lim\limits_{n\rightarrow\infty}\left\Vert\rho_{n}(\cdot,t)-\rho(\cdot,t) \right\Vert_{C^2[0,\pi]}=0.
$$
\end{lem}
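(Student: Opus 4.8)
The plan is to treat \eqref{eq:polarco} as a uniformly parabolic quasilinear equation on the fixed spatial interval $[0,\pi]$ and to invoke the standard a priori estimates and continuous-dependence theory for such equations, exactly as in \cite{LSU}. First I would fix $t_1\in(0,T)$ and record the uniform bounds that make the equation nondegenerate on $[0,t_1]$: by hypothesis $\rho\geq c_0>0$, and since $\rho$ is a solution on the compact time interval $[0,t_1]$ it is bounded above, $\rho\leq C_0$; moreover, because $\rho_n(\cdot,0)\to\rho(\cdot,0)$ in $C^1[0,\pi]$, the initial data $\rho_n(\cdot,0)$ are uniformly bounded in $C^1$ and (for $n$ large) uniformly bounded below by $c_0/2$. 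The comparison principle (or a direct barrier argument with the constant-in-$\theta$ sub/supersolutions used in the proof of Lemma \ref{lem:regular}) then gives, for $n$ large, uniform two-sided bounds $c_0/2\leq \rho_n(\theta,t)\leq C_1$ on $[0,\pi]\times[0,t_1]$. On this region the coefficient $1/(\rho^2+\rho_\theta^2)$ of $\rho_{\theta\theta}$ is bounded and bounded away from zero, so \eqref{eq:polarco} is uniformly parabolic with smooth coefficients.

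Next I would run the regularity bootstrap. With $\rho_n$ uniformly bounded in $L^\infty$ and the equation uniformly parabolic, the interior Schauder/De Giorgi–Nash–Moser theory of \cite{LSU}, combined with the uniform $C^1$ control of the boundary data via boundary Schauder estimates (the Dirichlet data $\rho_n(0,t)=\rho_n(\pi,t)=a$ are constant, hence trivially smooth in $t$), yields for each $\epsilon>0$ a constant $C_\epsilon$, independent of $n$, with
$$
\left\Vert \rho_n\right\Vert_{C^{2+\alpha,1+\alpha/2}([0,\pi]\times[\epsilon,t_1])}\leq C_\epsilon .
$$
By Arzelà–Ascoli, every subsequence of $\{\rho_n\}$ has a further subsequence converging in $C^{2,1}_{\mathrm{loc}}((0,t_1]\times[0,\pi])$ to some limit $\tilde\rho$, which solves \eqref{eq:polarco}; to identify $\tilde\rho=\rho$ I would propagate the initial closeness forward. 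The cleanest way is to subtract the two equations: $w_n:=\rho_n-\rho$ satisfies a linear uniformly parabolic equation $\partial_t w_n = a_n(\theta,t)\,\partial_{\theta\theta}w_n + b_n(\theta,t)\,\partial_\theta w_n + c_n(\theta,t)\,w_n$ with coefficients bounded uniformly in $n$ (they are built from $\rho_n,\rho,\partial_\theta\rho_n,\partial_\theta\rho$ and the smooth functions appearing in \eqref{eq:polarco}), homogeneous Dirichlet boundary values $w_n(0,t)=w_n(\pi,t)=0$, and initial datum $w_n(\cdot,0)\to 0$ in $C^1$, hence in $C^0$. A maximum-principle estimate (or an $L^2$ energy estimate with Gronwall) gives $\sup_{[0,\pi]\times[0,t_1]}|w_n|\to 0$, which forces $\tilde\rho=\rho$ and, the limit being independent of the subsequence, upgrades the subsequential convergence to full convergence: $\rho_n\to\rho$ in $C^{2,1}_{\mathrm{loc}}((0,t_1]\times[0,\pi])$, in particular $\left\Vert\rho_n(\cdot,t)-\rho(\cdot,t)\right\Vert_{C^2[0,\pi]}\to 0$ for every $t\in(0,t_1]$; since $t_1<T$ was arbitrary this is the claim.

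The main obstacle is obtaining the uniform (in $n$) lower bound $\rho_n\geq c_0/2$ on the whole interval $[0,t_1]$: the hypothesis only assumes $\rho$ itself is bounded below, and the nonlinear terms in \eqref{eq:polarco} blow up as $\rho\to 0$, so one must rule out that a small $C^1$ perturbation of the initial curve causes $\rho_n$ to collapse. I expect this to follow from the comparison principle by sandwiching $\rho_n$ between $\rho$-dependent barriers — e.g. using that $\Gamma_n(t)\succ\Lambda_0$ is preserved under the flow (as in the proof of Lemma \ref{lem:regular}) together with a lower barrier obtained from $\rho(\cdot,0)$ minus a small constant — but making the barrier quantitative and uniform in $n$ is the delicate point; everything after that is the routine parabolic machinery sketched above.
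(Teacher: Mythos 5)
The paper states Lemma \ref{lem:localconti} with no proof at all, so there is nothing to compare against line by line; your write-up supplies exactly the kind of argument the author is implicitly relying on, namely the quasilinear parabolic machinery of \cite{LSU} on the fixed interval $[0,\pi]$: uniform two-sided bounds making (\ref{eq:polarco}) uniformly parabolic, uniform interior Schauder estimates, Arzel\`a--Ascoli, and identification of the limit through the linear equation satisfied by $w_n=\rho_n-\rho$. That structure is correct, and the conclusion ($C^2$ convergence at each fixed positive time, obtained only in the interior of the time interval, consistent with the $C^1$ convergence of the data) matches the statement.

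The one point you flag --- the uniform-in-$n$ lower bound $\rho_n\geq c_0/2$ --- is indeed the only real issue, but your proposed fix via barriers is both vaguer and harder than necessary; the clean route is a continuation argument that reuses your own $C^0$ stability estimate. Set $T_n^*=\sup\{t\leq t_1:\ \rho_n\geq c_0/2\ \text{on}\ [0,\pi]\times[0,t]\}$, which is positive for large $n$ because $\rho_n(\cdot,0)\geq 3c_0/4$. On $[0,T_n^*]$ write the equation for $w_n$ in the form $\partial_t w_n=a_n\partial_{\theta\theta}w_n+b_n\partial_\theta w_n+c_nw_n$ with $a_n=(\rho_n^2+\rho_{n\theta}^2)^{-1}$ and with $b_n,c_n$ produced by the mean value theorem applied to the lower-order terms; the key observation is that if you split the second-order term as $(\rho_{n\theta\theta}-\rho_{\theta\theta})a_n+\rho_{\theta\theta}\bigl(a_n-(\rho^2+\rho_\theta^2)^{-1}\bigr)$, then $\rho_{n\theta\theta}$ never enters the coefficients, and $b_n,c_n$ are bounded by constants depending only on $c_0$, on $A$, and on $\sup|\rho_{\theta\theta}|$ of the \emph{fixed} solution $\rho$ over $[0,t_1]$ --- in particular independently of $n$ and requiring no a priori upper bound on $\rho_{n\theta}$. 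The maximum principle then gives $\sup|w_n|\leq e^{Ct_1}\sup|w_n(\cdot,0)|<c_0/4$ for $n$ large, hence $\rho_n\geq 3c_0/4$ up to time $T_n^*$ and therefore $T_n^*=t_1$. With the uniform lower bound secured, the rest of your bootstrap (uniform $C^{2+\alpha,1+\alpha/2}$ bounds for $t\geq\epsilon$, compactness, uniqueness of the limit) goes through as written. One small caveat: your claim that the $C^{2+\alpha}$ bounds are independent of $n$ uses an upper bound on $\rho_n$ and $\rho_{n\theta}$ as well; the former follows from the $C^0$ estimate on $w_n$ just obtained, and the latter from the gradient estimate for uniformly parabolic equations with uniformly $C^1$ initial data, so it is worth saying that the order of the steps is (i) lower/upper bounds via the continuation argument, (ii) gradient bound, (iii) Schauder.
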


\section{Behavior for $\sigma$ sufficient small or large}
\begin{prop}\label{prop:sigmalarge}
There exists $\sigma_1>0$ such that for all $\sigma>\sigma_1$, there exists some time $T_{\sigma}^*<T_{\sigma}$ such that $\Gamma_{\sigma}(t)\succ \Gamma^*$, for $T_{\sigma}^*<t<T_{\sigma}$
\end{prop}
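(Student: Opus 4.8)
The plan is to produce a single time $T_\sigma^*\in(0,T_\sigma)$ with $\Gamma_\sigma(T_\sigma^*)\succ\Gamma^*$ and then to let the comparison principle propagate this order forward. This suffices: since $\kappa\equiv A$ on $\Gamma^*$ together with (\ref{eq:fixbound}) makes $\Gamma^*$ at once a sub- and a super-solution of (\ref{eq:meancurpara})--(\ref{eq:fixbound}), and the solution $\Gamma_\sigma(\cdot)$ is in particular a super-solution, Theorem \ref{thm:comparison} applied on each $[T_\sigma^*,t]\subset[T_\sigma^*,T_\sigma)$ with super-solution $\Gamma_\sigma$, sub-solution $\Gamma^*$ and initial order $\Gamma_\sigma(T_\sigma^*)\succeq\Gamma^*$ gives $\Gamma_\sigma(t)\succ\Gamma^*$ for all $T_\sigma^*<t<T_\sigma$.

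To find such a time I would argue by contradiction, assuming $\Gamma_\sigma(t)\not\succ\Gamma^*$ for every $t\in(0,T_\sigma)$. Since $\Gamma^*$ is stationary and $\Gamma_\sigma(t)$ is a continuous family of regular embedded arcs — which, through the transport map of Section 3, can be written as $\Gamma_\sigma(t)=\varphi_1(M,t)$ over a fixed $C^1$ curve $M$, with $\Gamma^*$ given by the constant parametrization — Theorem \ref{thm:intersection} applies to the pair $(\Gamma_\sigma(t),\Gamma^*)$, so $Z[\Gamma_\sigma(t),\Gamma^*]$ and $SGN[\Gamma_\sigma(t),\Gamma^*]$ are non-increasing. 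For $\sigma$ large the tip of $\Gamma_\sigma$ has height $\sigma\varphi(0)$, exceeding the maximal height $1/A+\sqrt{1/A^2-a^2}$ of $\Gamma^*$, so $\Gamma_\sigma$ lies above $\Gamma^*$ near the $y$-axis; near $P$ and $Q$, $\Gamma^*$ leaves the strip $\{-a\le x\le a\}$ while $\Gamma_\sigma$ stays inside it, so there $\Gamma^*\succ\Gamma_\sigma$. Hence $SGN[\Gamma_\sigma(0),\Gamma^*]$ contains both signs, and with the bound $Z\le4$ and the $y$-axis symmetry it equals $[-\,+\,-]$, $Z=4$ (after, if needed, an arbitrarily small change of $\sigma$ making all intersections transverse). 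A palindromic word can only shorten to a palindrome, so $SGN[\Gamma_\sigma(t),\Gamma^*]$ stays among $[-+-]$, $[-]$, $[+]$; the value $[+]$ is $\Gamma_\sigma(t)\succ\Gamma^*$, excluded, so the pattern is eventually $[-+-]$ or $[-]$, and in both cases $T_\sigma=\infty$. If it is $[-]$, then $\Gamma^*\succ\Gamma_\sigma(t)$ is preserved by comparison and confines $\Gamma_\sigma(t)$ between $\Lambda_0$ and $\Gamma^*$; if it is $[-+-]$, the curve remains above $\Lambda_0$ and straddles $\Gamma^*$ with exactly two interior crossings. In both cases the a priori estimates underlying Lemma \ref{lem:regular} apply and the Lyapunov functional of Section 5 forces subconvergence to a stationary arc $\succeq\Lambda_0$ — that is, to $\Gamma_*$ or $\Gamma^*$ — and the persisting pattern identifies the limit as $\Gamma_*$ in the case $[-]$ and as $\Gamma^*$ in the case $[-+-]$.

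The last step, contradicting both of these scenarios for $\sigma$ large, is where I expect the real difficulty to lie. The point to exploit is that $\Gamma_\sigma(0)$ is quantitatively too large: its enclosed area $\sigma\int_{-a}^a\varphi$, its length $L_\sigma(0)$ and its energy all grow linearly in $\sigma$, whereas $\Gamma_*,\Gamma^*$ are fixed and $\Gamma^*$ is the higher-energy (unstable) equilibrium. The mechanism I would try to make rigorous combines: the area evolution $\frac{d}{dt}\mathcal{A}_\sigma(t)=A\,L_\sigma(t)-\int_{\Gamma_\sigma(t)}\kappa\,ds$, with $\int_{\Gamma_\sigma(t)}\kappa\,ds<2\pi$ for the embedded arc $\Gamma_\sigma(t)$ (which has $\kappa>0$ by (\ref{eq:curvaturee}) and the maximum principle), so $\mathcal{A}_\sigma$ increases while $L_\sigma$ is large; a $\sigma$-uniform lower bound on the time during which $L_\sigma(t)$ stays large, obtained by controlling the descent of the tip via the curvature equation (\ref{eq:curvaturee}); and the intersection bookkeeping above, which says that once an interior crossing with $\Gamma^*$ is lost the curve stays on one side of it thereafter. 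Since the $[-]$ side leads to the small equilibrium $\Gamma_*$ while the $[-+-]$ pattern cannot persist for a curve that remains large, the only possibility left should be $\Gamma_\sigma(t)\succ\Gamma^*$. I expect the two genuinely hard points to be the $\sigma$-uniform quantitative control of $L_\sigma$, $\int\kappa\,ds$ and the tip (for which (\ref{eq:curvaturee}) and Lemma \ref{lem:regular} are the tools), and — if it must be faced directly — ruling out eternal straddling of the saddle equilibrium $\Gamma^*$. Finally, the hypothesis $a\le 1/A$ is used throughout: to make $\Gamma_*,\Gamma^*$ legitimate sub/super-solutions, and to guarantee that $\Gamma_\sigma(t)\succ\Gamma^*$ cannot happen while $\Gamma_\sigma(t)$ is still a graph over $[-a,a]$, which is precisely why a purely static comparison argument cannot work here.
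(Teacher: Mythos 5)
Your overall architecture is the same as the paper's: produce one time at which $\Gamma_\sigma\succ\Gamma^*$, propagate it by Theorem \ref{thm:comparison}, and use the intersection-number bookkeeping (this is exactly Lemma \ref{lem:sigulartime}, including the observation that $T_\sigma<\infty$ already forces case (4) and hence the conclusion). But the crux of the proposition is the quantitative step you explicitly leave open --- showing that for $\sigma$ large the scenarios $[-]$ and $[-+-]$-forever cannot occur --- and as written this is a genuine gap, not a proof. The paper closes it with two explicit barriers rather than with integral identities: (i) a Grim reaper sub-solution $\gamma_G(t)$ (Lemma \ref{lem:grimreaper}), which travels downward at the fixed speed $1/b$ independent of $\sigma$, so that choosing $\sigma_1$ with $\Gamma_\sigma\succ\gamma_G(0)$ pins the tip $y_\sigma(t)$ above a prescribed height $K$ up to a prescribed time $t^*$ (Lemma \ref{lem:speed}); and (ii) a family of expanding circles $\partial B(t)$ with $R'=A-1/R$, $R(0)=R>1/A$, which are exact solutions of $V=-\kappa+A$; their quarter-arcs $\Sigma(t)$ never touch $\Gamma_\sigma(t)$ for $t<t^*$ (the boundary points of $\Sigma(t)$ are kept away by $y_\sigma>K$, and interior touching is excluded by the maximum principle), and at $t=t^*$ the circle is tangent to $\Gamma^*$ at $(-a,0)$, which together with symmetry forces $\Gamma_\sigma(t^*)\succ\Gamma^*$. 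This is the missing idea in your write-up: a $\sigma$-uniform lower bound on how long the curve stays high, obtained from an explicit travelling-wave sub-solution, combined with explicit expanding-circle barriers that sweep out the region below $\Gamma^*$ in that same time.

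Two further remarks. First, your area mechanism is not hopeless and could probably be made into an alternative proof: since $\kappa>0$ (Lemma \ref{lem:curvaturee} plus the maximum principle) the arc is convex and embedded, so $\int\kappa\,ds<2\pi$ and $\frac{d}{dt}\mathcal{A}_\sigma=AL_\sigma-\int\kappa\,ds>0$ whenever $L_\sigma\geq 2\pi/A$; hence the area cannot drop below $\mathcal{A}_\sigma(0)=\sigma\int_{-a}^a\varphi$ before $L_\sigma$ first falls below $2\pi/A$, while any embedded arc of length $2\pi/A$ in the upper half-plane encloses a bounded area --- a contradiction with convergence to $\Gamma_*$ or $\Gamma^*$ once $\sigma$ is large. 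But you did not carry this out, and the auxiliary ``$\sigma$-uniform control of the tip via (\ref{eq:curvaturee})'' you invoke is precisely the hard part that the paper replaces by the Grim reaper. Second, your appeal to Theorem \ref{thm:intersection} via the transport map glosses over the fact that the common parametrization is only available while the curve remains regular (in the paper, while $\frac{d}{ds}F_\sigma(0,t)\cdot(0,1)>0$, i.e.\ while the polar representation holds); the proof of Lemma \ref{lem:sigulartime} is careful about exactly this point, and your version should be too.
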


For proving this proposition, we introduce the Grim reaper for the curve shortening flow. Grim reaper is given by 
$$
G(x,t)=C-\frac{t}{b}+b\ln \cos\frac{x}{b},\ -\frac{b\pi}{2}<x<\frac{b\pi}{2},\ t>0,
$$
where $b>0$ and $C\in\mathbb{R}$. It is easy to see $G(x,t)$ satisfies
$$
G_t=\frac{G_{xx}}{1+G_x^2}.
$$
The Grim reaper $G(x,t)$ is a traveling wave moving downward with speed $1/b$.

\begin{lem}\label{lem:grimreaper}
If $b<2a/\pi$, the curve
$$
\gamma_G(t)=\{(x,y)\mid y=\max\{G(x,t),0\},\ |x|<\frac{b\pi}{2}\} \cup \{(x,y)\mid y=0,\ \frac{b\pi}{2}\leq|x|\leq a\}
$$
is a sub-solution of (\ref{eq:meancurpara}) and (\ref{eq:fixbound}) in the sense of Definition \ref{def:subsup}.
\end{lem}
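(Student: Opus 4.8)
The plan is to verify that $\gamma_G(t)$ satisfies the defining inequality for a sub-solution (Definition \ref{def:subsup}) at every point $P^*$ where a smooth test flow $\{S(t)\}$ touches $\gamma_G(t)$ from below. The curve $\gamma_G(t)$ is built from two pieces: the Grim reaper graph $y=G(x,t)$ on the region where it is positive, and the flat segment $y=0$ outside. I would split the verification according to where $P^*$ lies.

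First, on the interior of the Grim reaper piece (where $G(x,t)>0$ and $|x|<b\pi/2$), the curve is a smooth classical solution of the curve shortening flow $V=-\kappa$, which moves downward with speed $1/b$. Since $A>0$, we have $V = -\kappa = -\kappa + A - A < -\kappa + A$ pointwise, so the sub-solution inequality $V_{S(t_0)}(P^*)\le -\kappa_{S(t_0)}(P^*)+A$ holds there (using that at a point of tangency from below the normal velocity and curvature of the test flow $S$ are controlled by those of $\gamma_G$ in the correct direction, via the maximum principle as in the comparison argument). Second, on the interior of the flat segment (where $G(x,t)<0$, so $y\equiv 0$): here $\gamma_G(t)$ is stationary, $V=0$ and $\kappa=0$, so we need $0\le 0+A$, which is immediate since $A>0$. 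The role of the hypothesis $b<2a/\pi$ is precisely to guarantee that the two pieces actually meet \emph{inside} the interval $[-a,a]$, i.e.\ $b\pi/2<a$, so that the flat segments are nonempty and $\gamma_G(t)$ genuinely has the fixed extreme points $P=(-a,0)$ and $Q=(a,0)$; it also ensures, after choosing the constant $C$ appropriately (or just noting the qualitative behavior), that for all $t>0$ the Grim reaper has not yet collapsed below the $x$-axis everywhere.

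The delicate point — and the one I expect to be the main obstacle — is the behavior at the two junction points where $G(x,t)=0$, i.e.\ where the graph piece meets the flat piece. There $\gamma_G(t)$ is only $C^1$ (the Grim reaper meets $y=0$ with horizontal tangent, since $G_x\to \mp\infty$ only at $x=\pm b\pi/2$, whereas at the level set $G=0$ the slope is finite and nonzero... in fact one must check the matching is at least $C^1$, which requires $G_x\ne 0$ there and a corner may actually be present). I would handle this by observing that at such a corner the curve is \emph{convex} in the appropriate sense — the graph piece lies above the flat piece near the junction — so no smooth flow $\{S(t)\}$ can touch $\gamma_G(t)$ from below exactly at the corner point while staying below elsewhere (a smooth curve tangent from below at a convex corner would have to cross). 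Hence the corner points impose no condition, and Definition \ref{def:subsup} is vacuously satisfied there. Alternatively, if the matching is genuinely $C^1$, one approximates $\gamma_G(t)$ by smooth curves and passes to the limit using the $\succeq$/$\succ$ closure built into the comparison framework.

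To summarize the order of steps: (1) check $b<2a/\pi\iff b\pi/2<a$ gives the correct extreme points and nonempty flat pieces; (2) verify the sub-solution inequality on the smooth Grim-reaper interior using $V=-\kappa<-\kappa+A$; (3) verify it on the flat interior using $0\le A$; (4) dispose of the two junction points via the convexity/no-touching-from-below observation (or a smooth-approximation argument). The only real content is step (4); steps (2) and (3) are essentially the observation that both constituent motions are themselves sub-solutions because the driving force $A$ only helps.
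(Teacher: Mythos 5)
Your decomposition into the graph piece, the flat piece, and the two junction points is exactly the paper's, and your steps (1)--(3) match its argument. The problem is step (4), which you yourself identify as the only real content, and there your argument fails because you have reversed the test direction in Definition \ref{def:subsup}: by that definition a sub-solution is tested against smooth flows $S(t)$ that touch $\gamma_G(t)$ only once \emph{from above} (the standard viscosity convention), not from below as you assume throughout. At the junction point $x=x(t)$ where $G(x(t),t)=0$, the graph arm arrives with slope $G_x(x(t),t)=-\tan(x(t)/b)<0$ while the flat arm is horizontal, so the region above the curve subtends an angle strictly less than $\pi$ at the corner; hence no smooth curve can pass through the corner, stay above both arms, and meet $\gamma_G(t)$ only there. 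That is the paper's one-line argument, and it makes the corner vacuous for the sub-solution test. Your claim is the opposite one --- that no smooth flow can touch \emph{from below} at the corner because ``a smooth curve tangent from below at a convex corner would have to cross'' --- and that claim is false: any straight line through the corner with slope strictly between $G_x(x(t),t)$ and $0$ lies strictly below both arms except at the corner itself. The reversal is not a harmless relabeling: under your convention the lemma would actually fail at the corner (take $S(t)$ to be such a stationary line, so $V_S=0$ and $\kappa_S=0$, and the required inequality would read $0\geq A$). Your conclusion that the corner imposes no condition is correct, but only by accident and for the wrong geometric reason.

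A secondary, minor point: for $t\geq bC$ the Grim reaper has dropped entirely below the axis and $\gamma_G(t)$ degenerates to the flat segment $\Lambda_0$; the paper simply observes that $\Lambda_0$ is itself a sub-solution for those times, whereas you suggest choosing $C$ so that the collapse never happens, which is unnecessary and cannot be arranged for all $t>0$ in any case.
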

\begin{proof}
When $0<t<bC$, let $x(t)>0$ such that $G(x(t),t)=0$. 

For $|x|<x(t)$, $\gamma_G=\{(x,y)\mid y=G(x,t)\}$. Therefore, 
$$
G_{t}\leq \frac{G_{xx}}{1+G_x^2}+A\sqrt{1+G_x^2},\ |x|<x(t).
$$
For $x(t)<|x|<a$, $\gamma_G=\{(x,y)\mid y=0\}$. Obviously, $y=0$ is a sub-solution of
$$
u_{t}\leq \frac{u_{xx}}{1+u_x^2}+A\sqrt{1+u_x^2},\ x(t)<|x|<a.
$$
At the point $x=x(t)$($x=-x(t)$), it is impossible that for smooth flow $S(t)$, near $x=x(t)$($x=-x(t)$), $S(t)$ touches $\gamma_{G}(t)$ at $x=x(t)$($x=-x(t)$) only once from above.

Therefore, $\gamma_G(t)$ is a sub-solution of (\ref{eq:meancurpara}) and (\ref{eq:fixbound}), for $0<t<bC$.

When $t\geq bC$, $\gamma_G=\Lambda_0=\{(x,y)\mid y=0,\ |x|\leq a\}$(given in the proof of Lemma \ref{lem:regular}). Obviously, $\gamma_G$ is a sub-solution of (\ref{eq:meancurpara}) and (\ref{eq:fixbound}), for $t\geq bC$.
\end{proof}

Following lemma gives the result for the classification of the solution $\Gamma_{\sigma}(t)$.

\begin{lem}\label{lem:sigulartime}
For $\Gamma_{\sigma}(t)$ given by Theorem \ref{thm:category}, for $\sigma>0$, $\Gamma_{\sigma}(t)$ satisfies one of the following four conditions:

(1). $SGN(\Gamma_{\sigma}(t),\Gamma^*)=[-]$ for $t<T_{\sigma}$. Moreover, $T_{\sigma}=\infty$;

(2). there exists $t^*_{\sigma}$ such that $SGN(\Gamma_{\sigma}(t),\Gamma^*)=[-\ +\ -]$ for $t<t^*_{\sigma}$ and $SGN(\Gamma_{\sigma}(t),\Gamma^*)=[-]$ for $t^*_{\sigma}<t<T_{\sigma}$. Moreover, $T_{\sigma}=\infty$;

(3). $SGN(\Gamma_{\sigma}(t),\Gamma^*)=[-\ +\ -]$ for $t<T_{\sigma}$. Moreover, $T_{\sigma}=\infty$;

(4). there exists $T_{\sigma}^*<T_{\sigma}$ such that $SGN(\Gamma_{\sigma}(t),\Gamma^*)=[-\ +\ -]$ for $t<T_{\sigma}^*$ and $SGN(\Gamma_{\sigma}(t),\Gamma^*)=[+]$, $T_{\sigma}^*<t<T_{\sigma}$.
\end{lem}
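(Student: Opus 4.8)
The plan is to read off the only possible initial sign words $SGN(\Gamma_{\sigma}(0),\Gamma^{*})$ from the geometry, to propagate them forward with the intersection number principle together with the $y$-axis symmetry, and then to invoke Lemma~\ref{lem:regular} to get $T_{\sigma}=\infty$ in the three cases where the curve cannot overflow past the endpoints. To begin, note that $\Gamma_{\sigma}(0)=\{y=\sigma\varphi(x)\}$ is a concave graph contained in the strip $\{|x|\le a\}$, whereas the arc $\Gamma^{*}$ — which is more than a half circle of radius $1/A$ through $P=(-a,0)$ and $Q=(a,0)$ — issues from $P$ and $Q$ into the exterior strip $\{|x|>a\}$ (its unit tangent at $P$ has negative first component). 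Hence near $P$ the arc $\Gamma_{\sigma}(0)$ leaves into the region lying on the right-hand side of $\Gamma^{*}$, so in the sense of Definition~\ref{def:semiorder} the first letter, and by symmetry the last, of $SGN(\Gamma_{\sigma}(0),\Gamma^{*})$ is ``$-$''; together with the hypothesis $Z[\Gamma_{\sigma}(0),\Gamma^{*}]\le 4$ this forces $SGN(\Gamma_{\sigma}(0),\Gamma^{*})\in\{[-],\,[-\ +\ -]\}$. I would also record once and for all that $\Gamma_{\sigma}(0)$ and $\Gamma^{*}$ are symmetric about the $y$-axis and this symmetry is preserved by the flow, so that $SGN(\Gamma_{\sigma}(t),\Gamma^{*})$ is always a palindrome whose interior intersection points occur in mirror pairs.

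Next, since $\Gamma^{*}$ is a stationary solution I may take the common template curve of Theorem~\ref{thm:intersection} to be $M:=\Gamma^{*}$ and parametrize $\Gamma_{\sigma}(t)$ over $M$ (possible on $[0,T_{\sigma})$ because a solution is a regular arc homeomorphic to $M$); the intersection number principle then says $Z[\Gamma_{\sigma}(t),\Gamma^{*}]$ is nonincreasing and $SGN(\Gamma_{\sigma}(t),\Gamma^{*})$ can only lose letters as $t$ grows. Combined with the first paragraph and the palindrome property, exactly the following can happen: if the initial word is $[-]$ it remains $[-]$, which is case~(1); if it is $[-\ +\ -]$ then either it remains $[-\ +\ -]$ for all $t<T_{\sigma}$, which is case~(3), or at the first time $Z$ drops below $4$ the mirror pair of interior intersections annihilates, and by symmetry this happens in exactly one of two ways — both of them meeting on the $y$-axis, so that the central ``$+$'' is deleted and $[-]$ remains thereafter (case~(2), with $t^{*}_{\sigma}$ this time), or both running into $P$ and $Q$ simultaneously, so that the outer ``$-$''s are deleted and $[+]$ remains thereafter (case~(4), with $T^{*}_{\sigma}$ this time). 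A momentarily degenerate sign word at the transition instant is harmless, since cases~(2) and~(4) are asserted only on open time intervals.

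It remains to prove $T_{\sigma}=\infty$ in cases (1)–(3). In all three of these the first letter of $SGN(\Gamma_{\sigma}(t),\Gamma^{*})$ equals ``$-$'' for every $t<T_{\sigma}$. Comparing with the stationary sub-solution $\Lambda_{0}=\{y=0,\ -a\le x\le a\}$ and noting $\Gamma_{\sigma}(0)\succ\Lambda_{0}$, Theorem~\ref{thm:comparison} gives $\Gamma_{\sigma}(t)\succ\Lambda_{0}$, hence $\Gamma_{\sigma}(t)\subset\{y\ge 0\}$ with contact only at $P$ and $Q$. I then claim $\frac{d}{ds}F_{\sigma}(0,t)\cdot(0,1)>0$ for all $t\in(0,T_{\sigma})$, and symmetrically at $Q$. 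Indeed the unit tangent of $\Gamma_{\sigma}(t)$ at $P$ cannot have negative second component (the curve would then immediately enter $\{y<0\}$); it cannot equal $(1,0)$, since by Lemma~\ref{lem:curvaturee} we have $\kappa_{\sigma}(P,t)=A>0$, which would force $\frac{\partial^{2}}{\partial s^{2}}F_{\sigma}(0,t)=(0,-A)$ and again push the curve into $\{y<0\}$ (and is also incompatible with the non-tangency built into $\Gamma_{\sigma}(t)\succ\Lambda_{0}$); and it cannot have rotated up to $(-1,0)$ or beyond, because to do so it must first pass through the tangent direction of $\Gamma^{*}$ at $P$, and at that instant the local picture at $P$ forces the first letter of $SGN(\Gamma_{\sigma}(t),\Gamma^{*})$ to flip to ``$+$'', which is excluded in cases (1)–(3). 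Thus the hypothesis of Lemma~\ref{lem:regular} holds on $(0,T_{\sigma})$ (after an arbitrarily small time shift in case $\varphi'(\pm a)=0$, $\Gamma_{\sigma}(t)$ being smooth for $t>0$), and Lemma~\ref{lem:regular} rules out $T_{\sigma}<\infty$. In case~(4), once the word becomes $[+]$ the curve has overflowed past the endpoints and this argument breaks down, which is why no claim on $T_{\sigma}$ is made there.

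The main obstacle is the last exclusion above, namely the claim that the tangent of $\Gamma_{\sigma}(t)$ at an endpoint cannot rotate past the tangent of $\Gamma^{*}$ there without the first letter of $SGN(\Gamma_{\sigma}(t),\Gamma^{*})$ changing. This is a local statement about two $C^{1}$ arcs issuing from a common point, but making it precise requires carefully unwinding the ``right-hand side'' orientation conventions of Definitions~\ref{def:semiorder} and~\ref{def:signal} together with the downward-normal convention of Remark~\ref{rem:direction} and checking that they are mutually consistent. Everything else — the palindrome combinatorics and the bookkeeping of the transition times — is routine once Theorems~\ref{thm:comparison} and~\ref{thm:intersection} and Lemmas~\ref{lem:curvaturee} and~\ref{lem:regular} are available.
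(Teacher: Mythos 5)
Your proposal is correct and follows essentially the same route as the paper: read off that the initial word is $[-]$ or $[-\ +\ -]$, let the intersection number principle together with the $y$-axis symmetry drive $SGN(\Gamma_{\sigma}(t),\Gamma^*)$ monotonically through exactly the four listed scenarios, lock in the post-transition behavior, and obtain $T_{\sigma}=\infty$ in cases (1)--(3) by squeezing the endpoint tangent between those of $\Lambda_0$ and $\Gamma^*$ so that Lemma \ref{lem:regular} applies. The only organizational difference is that the paper applies Theorem \ref{thm:intersection} just on the interval $[0,\tau_0)$ where the polar representation is available and then propagates the resulting order relations forward by Theorem \ref{thm:comparison}, whereas you invoke the principle on all of $[0,T_{\sigma})$ and argue the tangent rotation at $P$ directly; the substance, including the endpoint-tangent claim you flag as the main obstacle, matches the paper's argument.
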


\begin{proof}
Seeing the assumption in Theorem \ref{thm:category}, there exists $\sigma_0>0$ such that

(a). $0<\sigma\leq\sigma_0$, $\Gamma^*\succeq\Gamma_{\sigma}$;

(b). $\sigma>\sigma_0$, $\Gamma_{\sigma}$ intersects $\Gamma^*$ fourth.

{\bf Step 1.} For $0<\sigma\leq\sigma_0$, by comparison principle, $\Gamma^*\succ\Gamma_{\sigma}(t)$, for $0<t<T_{\sigma}$. Noting $\sigma>0$, $\Gamma_{\sigma}(t)\succ \Lambda_0$, for $0<t<T_{\sigma}$. Therefore, $\frac{d}{ds}F_{\sigma}(0,t)\cdot(0,1)>0$, $0<t<T_{\sigma}$. By the same method in the proof of Lemma \ref{lem:regular}, we can prove $T_{\sigma}=\infty$. Therefore, for $0<\sigma\leq\sigma_0$, condition (1) holds. 

{\bf Step 2.} For $\sigma>\sigma_0$, seeing the choice of $\Gamma_{\sigma}$, $SGN(\Gamma_{\sigma},\Gamma^*)=[-\ +\ -]$.  

Let $\tau_0$ depending on $\sigma$ satisfy 
$$
\tau_0=\sup\{\tau \mid\frac{d}{ds}F_{\sigma}(0,t)\cdot(0,1)>0, 0<t<\tau\}. 
$$ 
Since $\frac{d}{ds}F_{\sigma}(0,0)\cdot(0,1)>0$, we can deduce $\tau_0>0$. Therefore, Lemma \ref{lem:regular} implies $T_{\sigma}>\tau_0$. Moreover, $\Gamma_{\sigma}(t)$ can be represented by polar coordinate, $0<t<\tau_0$. This means that $\Gamma_{\sigma}(t)$ satisfies the assumption of Theorem \ref{thm:intersection} for $0<t<\tau_0$. Then 
$$
SGN(\Gamma_{\sigma}(t),\Gamma^*)\triangleleft[-\ +\ -],\ 0<t<\tau_0.
$$ 
Seeing the symmetry of $\Gamma_{\sigma}(t)$, then for $t<\tau_0$, one of the following three conditions holds

(i). $SGN(\Gamma_{\sigma}(t),\Gamma^*)=[+]$;

(ii). $SGN(\Gamma_{\sigma}(t),\Gamma^*)=[-]$; 

(iii). $SGN(\Gamma_{\sigma}(t),\Gamma^*)=[-\ -]$.

(iv). $SGN(\Gamma_{\sigma}(t),\Gamma^*)=[-\ +\ -]$.

{\bf Step 3.} If for some $t^*_{\sigma}<\tau_0$ (ii) or (iii) holds, then $\Gamma^*\succeq\Gamma_{\sigma}(t^*_{\sigma})$. Then by comparison principle, $\Gamma^*\succ\Gamma(t)\succ\Lambda_0$, $t^*_{\sigma}<t<T_{\sigma}$. Therefore, by the same argument in Step 1, condition (2) holds. 

If for some $T_{\sigma}^*<\tau_0$ (i) holds, this means that $\Gamma_{\sigma}(T_{\sigma}^*)\succ\Gamma^*$. By comparison principle, $\Gamma_{\sigma}(t)\succ\Gamma^*$, $T_{\sigma}^*<t<T_{\sigma}$. Therefore, condition (4) holds.

If for every $t<\tau_0$, there holds $SGN(\Gamma_{\sigma}(t),\Gamma^*)=[-\ +\ -]$. Combining $\Gamma_{\sigma}(t)\succ\Lambda_0$, $t<\tau_0$, there exists $\delta>0$ such that
$$
\frac{d}{ds}F_{\sigma}(0,t)\cdot(0,1)>\delta,\ t<\tau_0.
$$
If $\tau_0<\infty$, by the definition of $\tau_0$, $\frac{d}{ds}F_{\sigma}(0,\tau_0)\cdot(0,1)=0$. This yields a contradiction. Therefore, $\tau_0=\infty$. Consequently, $T_{\sigma}=\infty$. Condition (3) holds.

We complete the proof.
\end{proof}

In the following, we consider two circles. 
$$
\partial B_1=\big\{(x,y)\mid (x-R)^2+\big(y-(1+R/a)\sqrt{1/A^2-a^2}\big)^2=R^2\big\}
$$
and 
$$
\partial B_2=\big\{(x,y)\mid \big(x-R)^2+(y-(1+R/a)\sqrt{1/A^2-a^2}\big)^2=\frac{1}{A^2}(1+R/a)^2\big\},
$$
where $R>1/A$. And we let
$$
(0,K)=\partial B_2\cap\{x=0\mid y>0\}.
$$
It is easy to check that $\partial B_2$ intersects $\Gamma^*$ tangentially at $(-a,0)$. Let $R(t)$ be the solution of 
\begin{equation}\label{eq:circle}
R^{\prime}(t)=A-1/R(t)
\end{equation}
with $R(0)=R$. Since $R(0)>1/A$, $R(t)$ is increasing and $\lim\limits_{t\rightarrow\infty}R(t)=\infty$. Noting $a\leq 1/A$ and $(1+R/a)/A>R$, there exists $t^*$ such that $R(t^*)=(1+R/a)/A$. 

\begin{lem}\label{lem:speed}
Let point $(0,y_{\sigma}(t))=\Gamma_{\sigma}(t)\cap\{(x,y)\mid x=0\}$, $t<T_{\sigma}$. There exists $\sigma_1$(indeed $\sigma_1$ in this lemma is the one we want to choose in Proposition \ref{prop:sigmalarge}) such that for all $\sigma>\sigma_1$ there holds that if $t^*<T_{\sigma}$,
$$
y_{\sigma}(t)>K,\ t<t^*.
$$ 
\end{lem}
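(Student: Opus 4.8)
The plan is to trap $\Gamma_{\sigma}(t)$ from below by the expanding circle $\partial B(t)$ of radius $R(t)$ centred at the fixed point $c:=\big(R,(1+R/a)\sqrt{1/A^2-a^2}\big)$. Since $R(t)$ solves (\ref{eq:circle}), $\partial B(t)$ is an exact closed--curve solution of $V=-\kappa+A$: it coincides with $\partial B_1$ at $t=0$, with $\partial B_2$ at $t=t^*$, and its highest point on the $y$--axis is $(0,k(t))$ with $k(t)=(1+R/a)\sqrt{1/A^2-a^2}+\sqrt{R(t)^2-R^2}$, which increases from $(1+R/a)\sqrt{1/A^2-a^2}$ to $K$ as $t$ runs over $[0,t^*]$. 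The hypothesis $t^*<T_{\sigma}$ is used only to guarantee that $\Gamma_{\sigma}(t)$ is still defined on all of $[0,t^*]$, so that this barrier can be run over the full interval.

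Fix $t_1\in[0,t^*)$. Since (\ref{eq:circle}) is autonomous, the concentric circle $C_{t_1}(t)$ with $C_{t_1}(t_1)=\partial B_2$ is the time--translate $\partial B(t+t^*-t_1)$; in particular $C_{t_1}(0)$ is a fixed circle lying between $\partial B_1$ and $\partial B_2$, and as $t_1$ varies these circles form a compact family. Because $\partial B_1,\partial B_2$ are fixed circles while $\Gamma_{\sigma}$ is a concave cap of height $\sigma\varphi(0)\to\infty$ whose tangents at the endpoints $(\pm a,0)$ have slopes $\sigma\varphi'(\pm a)\to\mp\infty$, one can fix $\sigma_1$ --- the constant in the statement --- so large that, for every $\sigma>\sigma_1$ and every such $t_1$, the relevant sub--arc of $C_{t_1}(0)$, completed by horizontal segments along $y=0$ (in the spirit of Lemma~\ref{lem:grimreaper}) to a curve $\gamma_{t_1}(0)$ from $P$ to $Q$, satisfies $\Gamma_{\sigma}\succ\gamma_{t_1}(0)$, with the ordering and intersection hypotheses of Theorems~\ref{thm:comparison} and \ref{thm:intersection} met. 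The segment completion is chosen so that the corners are of the favourable (``valley'') type; then $\gamma_{t_1}(t)$ --- built the same way from $C_{t_1}(t)$ --- is a genuine sub--solution of (\ref{eq:meancurpara}), (\ref{eq:fixbound}) on $[0,t_1]$, the circular arc satisfying $V=-\kappa+A$ with equality and the flat parts $V=0\le A$, exactly as in the proof of Lemma~\ref{lem:grimreaper}.

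The comparison principle Theorem~\ref{thm:comparison} then gives $\Gamma_{\sigma}(t)\succ\gamma_{t_1}(t)$ for $0\le t\le t_1$. At $t=t_1$ the barrier contains the arc of $C_{t_1}(t_1)=\partial B_2$ through $(0,K)$, so $\Gamma_{\sigma}(t_1)$ lies strictly above that arc, and reading off the value at $x=0$ gives $y_{\sigma}(t_1)>K$; the strictness (not merely ``$\ge$'') comes from the strong maximum principle / Hopf lemma underlying Theorem~\ref{thm:comparison} applied at an interior contact point. Since $t_1\in[0,t^*)$ is arbitrary and $\sigma_1$ was chosen uniformly in $t_1$, this is the claim.

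The hard part is the second paragraph --- arranging the comparison. The circle $\partial B(t)$ is centred off the $y$--axis and is wider than the strip $\{|x|\le a\}$, so $\Gamma_{\sigma}$ genuinely crosses it (for $a$ near $1/A$ even the endpoint $(a,0)$ lies inside $\partial B_1$), and one must check that for $\sigma$ large the crossing pattern of $\Gamma_{\sigma}$ with $\gamma_{t_1}(0)$ is the benign one --- $\Gamma_{\sigma}$ uniformly on top --- uniformly in $t_1$, that the segment completion stays inside a shuttle neighbourhood of $\Gamma_{\sigma}(t)$, and that its corners do not spoil the sub--solution property; the intersection--number principle Theorem~\ref{thm:intersection} and the sign bookkeeping $SGN[\cdot,\cdot]$ are the natural tools here. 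Apart from Lemma~\ref{lem:regular} --- which together with $t^*<T_{\sigma}$ supplies the regularity of $\Gamma_{\sigma}(t)$ on $[0,t^*]$ --- no genuinely new analytic ingredient is needed beyond Theorem~\ref{thm:comparison}.
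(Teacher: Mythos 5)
Your overall strategy --- trap $\Gamma_{\sigma}(t)$ from below by a sub-solution whose height at $x=0$ exceeds $K$ for all $t<t^*$ --- is the right one, but the barrier you chose is not the paper's and, more importantly, it does not work. The paper proves this lemma with the Grim reaper of Lemma \ref{lem:grimreaper}: $\gamma_G(t)$ is a graph over a sub-strip $|x|<b\pi/2$ with $b<2a/\pi$, completed by segments on $y=0$ whose corners are of the harmless valley type, and $G(0,t)=C-t/b$; one takes $C$ so large that $C-t^*/b>K$, then $\sigma_1$ so large that $\Gamma_{\sigma}\succ\gamma_G(0)$ for $\sigma>\sigma_1$, and the comparison principle gives $y_{\sigma}(t)\geq G(0,t)>K$ for $t<t^*$. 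The expanding circles $\partial B(t)$ enter only afterwards, in the proof of Proposition \ref{prop:sigmalarge}, and there they are used through an avoidance argument for the arc $\Sigma(t)=\partial B(t)\cap\{x\le0,\ y\ge0\}$ whose validity \emph{depends on} Lemma \ref{lem:speed}: it is precisely $y_{\sigma}(t)>K$ that keeps the upper endpoint of $\Sigma(t)$, which sits on the $y$-axis at height at most $K$, away from $\Gamma_{\sigma}(t)$. Your proposal inverts this logical order.

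The concrete obstruction to the circle barrier is geometric. The point $(0,K)$ lies on the \emph{upper} branch of $\partial B_2$ over the strip, i.e.\ on $y=h+\sqrt{r^2-(x-R)^2}$ with $h=(1+R/a)\sqrt{1/A^2-a^2}$ and $r=(1+R/a)/A$. Since the centre abscissa satisfies $R>1/A\ge a$, this branch is strictly increasing on $[-a,a]$ and takes the values $2h$ at $x=-a$ and $h+\sqrt{4aR+h^2}>0$ at $x=a$. Hence any curve from $P=(-a,0)$ to $Q=(a,0)$ that contains this arc in a neighbourhood of $x=0$ must turn downward at some $x_0\in(0,a]$ at which the arc is still rising. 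That turn is a peak-type corner, not a valley: a smooth flow can touch it from above at that single point with arbitrarily large downward speed, so the sub-solution requirement of Definition \ref{def:subsup} fails there, and Theorem \ref{thm:comparison} cannot be invoked. (Completing with vertical segments at $x=\pm a$ instead produces the same peak at the top-right corner; and in any case the initial ordering $\Gamma_{\sigma}\succ\gamma_{t_1}(0)$ is unattainable near $Q$, where $\Gamma_{\sigma}$ is at height $0$ while the arc is at positive height, no matter how large $\sigma$ is.) This is not bookkeeping that $SGN[\cdot,\cdot]$ or Theorem \ref{thm:intersection} can absorb; the off-centre circle is simply not usable as a $P$-to-$Q$ sub-solution passing through $(0,K)$. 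Replace it by the Grim reaper.
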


We use the Grim reaper to prove this lemma.

\begin{proof}
Seeing that Grim reaper given by Lemma \ref{lem:grimreaper}
$$
G(x,t)=C-\frac{t}{b}+b\ln \cos\frac{x}{b}
$$
is a traveling wave with uniform speed $1/b$, then choose $C$ large enough such that $G(0,t)=C-t/b>C-t^*/b>K$, $t<t^*$.

We can choose $\sigma_1$ such that for all $\sigma>\sigma_1$, $\Gamma_{\sigma}\succ \gamma_{G}(0)$.
 
If $t^*<T_{\sigma}$, Lemma \ref{lem:grimreaper} implies that $\Gamma_{\sigma}(t)\succ \gamma_{G}(t)$, for $t<t^*$. This means that $y_{\sigma}(t)>K$, $t<t^*$.
\end{proof}

\begin{proof}[Proof of Proposition \ref{prop:sigmalarge}]
Choose $\sigma_1$ as in Lemma \ref{lem:speed}. 

{\bf Step 1.} For $\sigma>\sigma_1$, if $T_{\sigma}\leq t^*$($t^*$ is given in Lemma \ref{lem:speed}), this means that $T_{\sigma}<\infty$. By Lemma \ref{lem:sigulartime}, only the condition (4) in Lemma \ref{lem:sigulartime} can hold. Consequently, the result is true.

{\bf Step 2.} For $\sigma>\sigma_1$, if $t^*<T_{\sigma}$, by Lemma \ref{lem:speed}, $y_{\sigma}(t)>K$, $t<t^*$. Here we prove that there holds 
$$
\Gamma_{\sigma}(t)\succ\Gamma^*,\ t^*<t<T_{\sigma}.
$$ 

Let 
$$
\partial B(t)=\big\{(x,y)\mid (x-R)^2+\big(y-(1+R/a)\sqrt{1/A^2-a^2}\big)^2=R(t)^2\big\},
$$
where $R(t)$ is given by (\ref{eq:circle}). It is easy to see that $\partial B(t)$ evolves by $V=-\kappa+A$. 

Let $\Sigma(t)=\partial B(t)\cap\{(x,y)\mid x\leq0,y\geq0\}$. There exists $\delta$ satisfying $0<\delta<t^*$ such that 
$$
R(\delta)=\sqrt{R^2+(1+R/a)^2(1/A^2-a^2)}.
$$
Obviously, $\partial B(\delta)$ passes through the origin $(0,0)$. Seeing the Figure \ref{fig:circlesub1} and \ref{fig:circlesub2} and noting the choice of $t^*$, the boundary of $\Sigma(t)$ does not intersect $\Gamma_{\sigma}(t)$, $t<t^*$. By maximum principle, $\Sigma(t)$ can not intersect $\Gamma_{\sigma}(t)$ interior. Therefore, $\Gamma_{\sigma}(t)$ does not intersect $\Sigma(t)$, $t<t^*$. Here we omit the detail.

\begin{figure}[htbp]
	\begin{center}
            \includegraphics[height=6.0cm]{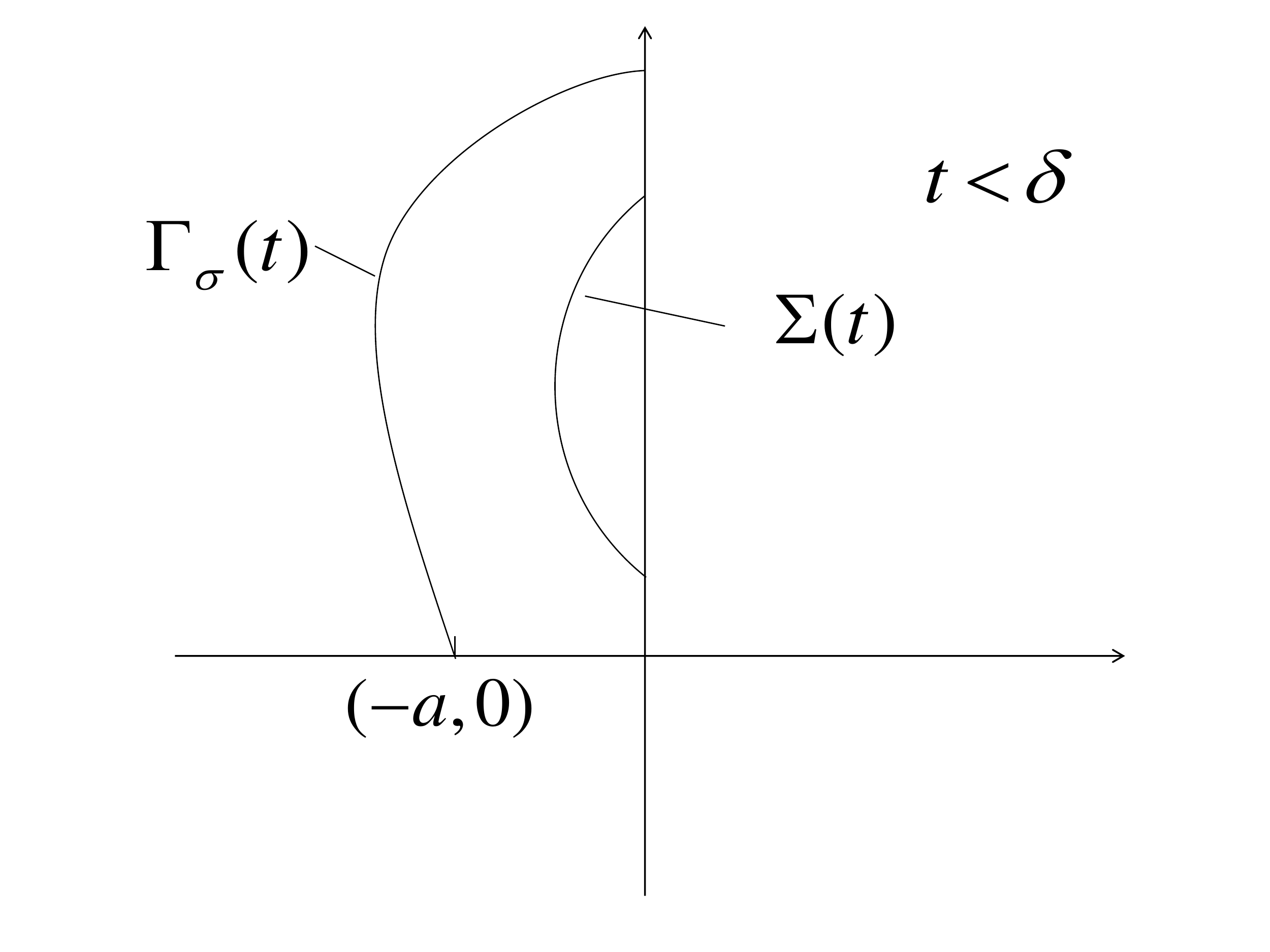}
		\vskip 0pt
		\caption{Proof of Proposition \ref{prop:sigmalarge}}
        \label{fig:circlesub1}
	\end{center}
\end{figure}
\begin{figure}[htbp]
	\begin{center}
            \includegraphics[height=6.0cm]{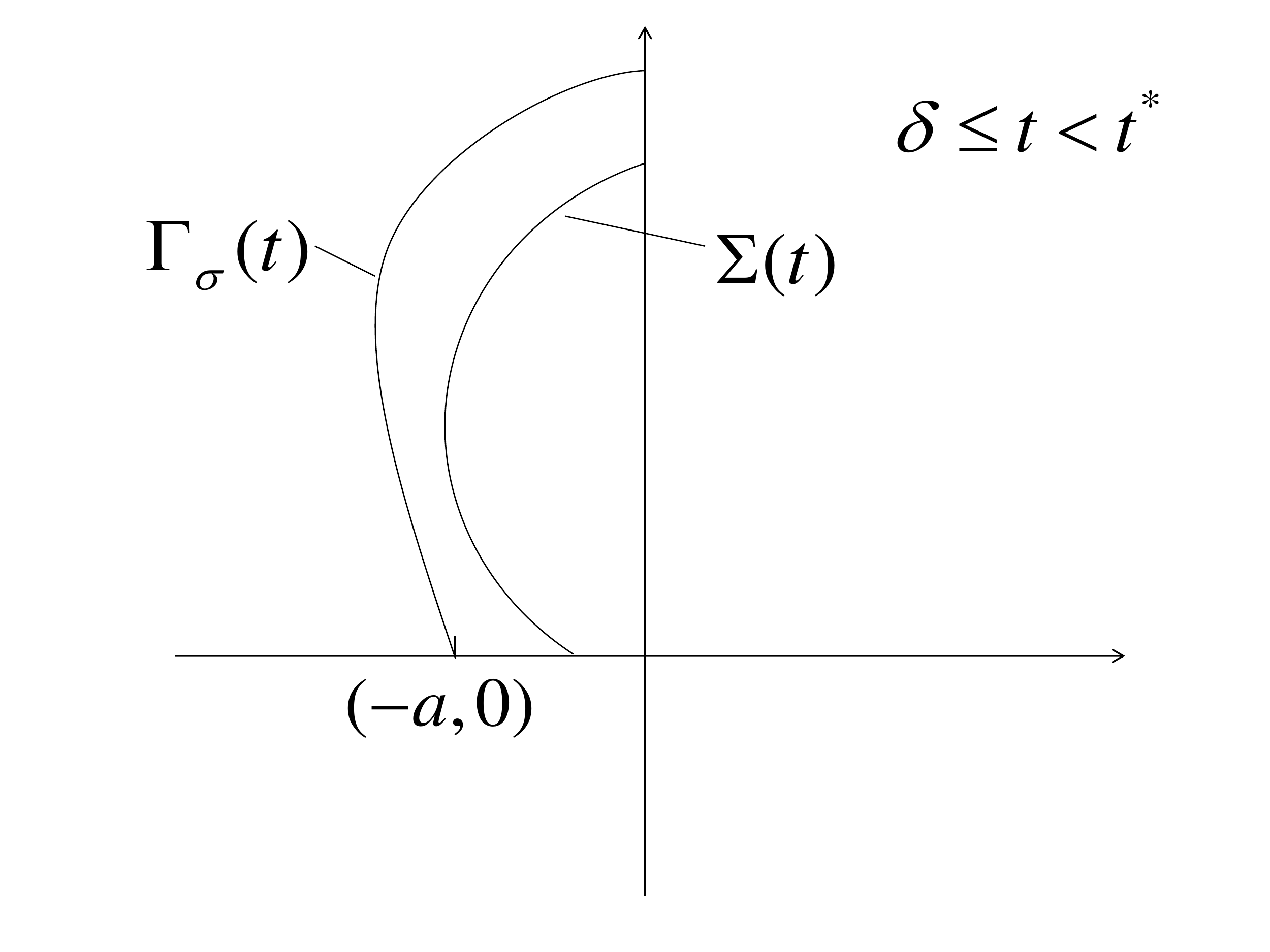}
		\vskip 0pt
		\caption{Proof of Proposition \ref{prop:sigmalarge}}
        \label{fig:circlesub2}
	\end{center}
\end{figure}

Seeing the Figure \ref{fig:circlesub3}, $\Sigma(t^*)$ intersects $\Gamma^*$ tangentially at $(-a,0)$. Since $\Gamma_{\sigma}(t)$ does not intersect $\Sigma(t)$ for $t<t^*$, then $\Gamma(t^*)\succ\Gamma^*$. Therefore, $\Gamma(t)\succ\Gamma^*$ for $t^*<t<T_{\sigma}$. Let $T_{\sigma}^*=t^*$, we complete the proof. 
\begin{figure}[htbp]
	\begin{center}
            \includegraphics[height=6.0cm]{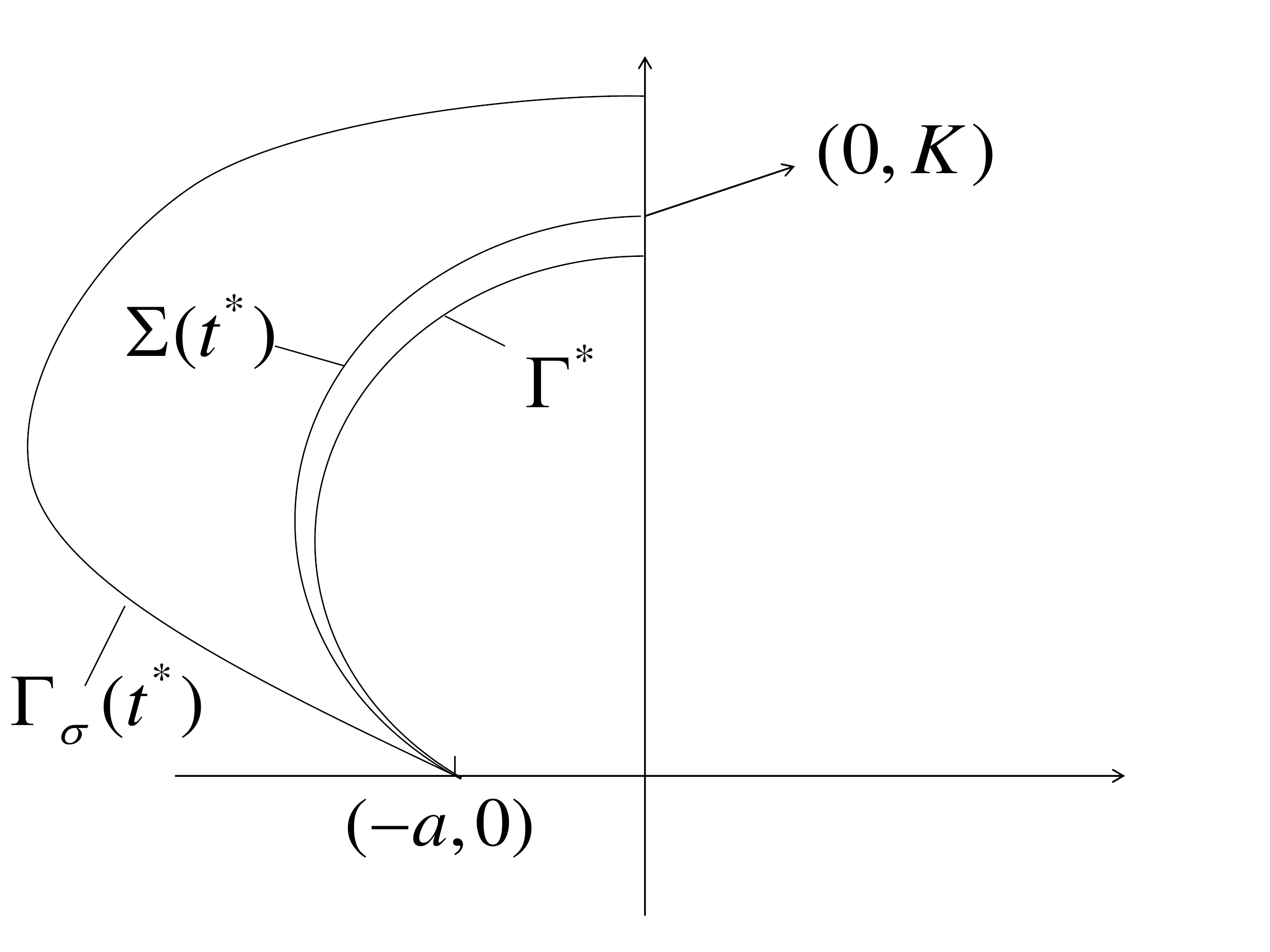}
		\vskip 0pt
		\caption{Proof of Proposition \ref{prop:sigmalarge}}
        \label{fig:circlesub3}
	\end{center}
\end{figure}

\end{proof}

\begin{prop}\label{prop:sigmasmall}
There exists $\sigma_2>0$ such that for all $\sigma<\sigma_2$, $T_{\sigma}=\infty$ and $\Gamma_{\sigma}(t)\rightarrow \Gamma_*$ in $C^1$, as $t\rightarrow\infty$.
\end{prop}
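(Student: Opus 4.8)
The plan is to squeeze the flow between the stationary super-solution $\Gamma_*$ and the sub-solution $\Lambda_0$, and to exploit that for $\sigma$ small the curvature of $\Gamma_\sigma(t)$ stays in $[0,A]$, which makes the flow monotone. \textit{Step 1 (trapping and $T_\sigma=\infty$).} Fix $\sigma_2>0$ so small that for every $0<\sigma<\sigma_2$ one has, for all $-a\le x\le a$,
$$
0\le\frac{-\sigma\varphi''(x)}{(1+\sigma^{2}\varphi'(x)^{2})^{3/2}}\le A,\qquad \sigma\varphi(x)\le\sqrt{1/A^{2}-x^{2}}-\sqrt{1/A^{2}-a^{2}},
$$
with strict inequality in the second relation on $(-a,a)$. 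Both hold for $\sigma$ small: the curvature of $\Gamma_\sigma$ is nonnegative because $\varphi''\le0$ and is bounded by $\sigma\sup_{[-a,a]}|\varphi''|$; and $\sigma\varphi\to0$ uniformly while the right side is positive on $(-a,a)$, the only delicate point being $x\to\pm a$, where both sides vanish linearly and the inequality reduces to $\sigma|\varphi'(a)|<a/\sqrt{1/A^{2}-a^{2}}$. Thus $\Lambda_0\prec\Gamma_\sigma\prec\Gamma_*$ (with $\Lambda_0$ the segment of the proof of Lemma~\ref{lem:regular}). Since $\Gamma_*$ is a stationary solution of (\ref{eq:meancur}), hence a super-solution, and $\Lambda_0$ is a sub-solution, Theorem~\ref{thm:comparison} gives $\Lambda_0\prec\Gamma_\sigma(t)\prec\Gamma_*$ for $0\le t<T_\sigma$. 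In particular $\Gamma_\sigma(t)$ stays in a fixed compact region, its tangent at $P=(-a,0)$ has positive $y$-component for every $t<T_\sigma$, and Lemma~\ref{lem:regular} forces $T_\sigma=\infty$.

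\textit{Step 2 (monotonicity).} By Lemma~\ref{lem:curvaturee} and the maximum principle the interval $[0,A]$ is invariant for $\kappa_t=\kappa_{ss}-\kappa\kappa_s^{2}+\kappa^{2}(\kappa-A)$: at a spatial maximum with $\kappa=A$ one gets $\kappa_t\le\kappa^{2}(\kappa-A)=0$, at a spatial minimum with $\kappa=0$ one gets $\kappa_t=\kappa_{ss}\ge0$, and the boundary values $\kappa=A$ lie in $[0,A]$. Hence $0\le\kappa_\sigma(s,t)\le A$ for all $s,t$. Two consequences: $\Gamma_\sigma(t)$ remains a convex graph $y=y_\sigma(x,t)$ over $[-a,a]$ (its total turning is $<\pi$), and its upward normal velocity $V=A-\kappa_\sigma\ge0$, i.e. $\partial_t y_\sigma\ge0$. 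Since $\Gamma_\sigma(t)\prec\Gamma_*$ gives $y_\sigma(\cdot,t)\le\sqrt{1/A^{2}-(\cdot)^{2}}-\sqrt{1/A^{2}-a^{2}}$, the bounded monotone family $y_\sigma(\cdot,t)$ converges pointwise on $[-a,a]$ as $t\to\infty$ to some $y_\infty$ with $y_\infty(\pm a)=0$ and $0<y_\infty(x)\le\sqrt{1/A^{2}-x^{2}}-\sqrt{1/A^{2}-a^{2}}$ on $(-a,a)$ (strict positivity because $\Gamma_\sigma(t)\succ\Lambda_0$ strictly for $t>0$).

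\textit{Step 3 ($C^{1}$ convergence and identification of the limit).} Represent $\Gamma_\sigma(t)$ in polar coordinates $\rho_\sigma(\theta,t)$, $0\le\theta\le\pi$, as in Lemma~\ref{lem:regular}; this is legitimate for all $t$ by Step 1, and Step 1 also gives $0<\rho_1\le\rho_\sigma\le\rho_2$ for $t\ge1$. The quasilinear parabolic estimates of \cite{LSU}, used already in Lemma~\ref{lem:localconti}, then yield $\|\rho_\sigma(\cdot,t)\|_{C^{2+\alpha}[0,\pi]}\le C$ for $t\ge1$; together with Step 2 this forces $\rho_\sigma(\cdot,t)\to\rho_\infty$ in $C^{2}[0,\pi]$ as $t\to\infty$, where $\rho_\infty$ is the polar representation of the graph $y_\infty$. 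Moreover $\partial_t\rho_\sigma\to0$ uniformly on $[0,\pi]$ (the uniform Hölder bound in $t$, together with $\partial_t\rho_\sigma\ge0$ and $\int_{1}^{\infty}\partial_t\rho_\sigma\,dt=\rho_\infty-\rho_\sigma(\cdot,1)<\infty$), so letting $t\to\infty$ in (\ref{eq:polarco}) shows that the limit curve is stationary, i.e. has constant curvature $A$ and extreme points $P,Q$. The only curve of constant curvature $A$ joining $P$ and $Q$ that is a graph over $[-a,a]$ is $\Gamma_*$. Hence $\Gamma_\sigma(t)\to\Gamma_*$ in $C^{1}$ in the sense of Definition~\ref{def:C1close}.

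\textit{On the main difficulty.} The point needing care is the invariant-region argument in Step 2 in the presence of the jump of the boundary data at $t=0$ (the initial curvature is $<A$ at $P$ and $Q$, while the boundary value is $A$ for $t>0$): this is harmless precisely because $A$ is the \emph{upper} endpoint of the invariant interval, exactly as for the heat equation with sub-unit initial data and unit boundary data; and the passage to the limit in the equation in Step 3 relies on the uniform $C^{2+\alpha,1+\alpha/2}$ estimates. Finally, to cover also $\sigma\le0$ in the statement one first uses an auxiliary (Grim-reaper type) sub-solution to show that $\Gamma_\sigma(t)$ enters the region between $\Lambda_0$ and $\Gamma_*$ in finite time, after which the preceding analysis applies; we do not detail this last reduction here.
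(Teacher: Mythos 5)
The statement covers \emph{all} $\sigma<\sigma_2$, in particular every $\sigma\le 0$, and your argument genuinely only works for $0<\sigma<\sigma_2$. Step 1 needs $\Gamma_\sigma\succ\Lambda_0$ and the hypothesis $\frac{d}{ds}F_\sigma(0,t)\cdot(0,1)>0$ of Lemma \ref{lem:regular}, and Step 2 needs the initial curvature to lie in $[0,A]$; all of these fail for $\sigma<0$, where the initial graph $\sigma\varphi\le 0$ lies below the segment $\Lambda_0$ and is convex. The deferred reduction via an ``upward Grim reaper'' does not close this: the upward--travelling wave $-G$ violates the zero Dirichlet data at $x=\pm a$ after a finite time, before its trough has cleared the axis, so it does not force the whole curve above $\Lambda_0$; and even granting that the curve enters the lens between $\Lambda_0$ and $\Gamma_*$ at some finite time, you would still owe a regularity and graph-persistence argument up to that time, since Lemma \ref{lem:regular} is unavailable there. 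The paper closes the case $\sigma<0$ directly and cheaply: with $k=\sigma\varphi'(a)\ge 0$, the stationary piecewise-linear curve $\underline{u}(x)=\max\{-k(x+a),k(x-a)\}$ is a viscosity sub-solution lying below $\sigma\varphi$, so together with the upper barrier $\Gamma_*$ it gives boundary gradient bounds, hence (differentiating the graph equation and applying the maximum principle) a uniform interior gradient bound, hence $T_\sigma^g=\infty$ by \cite{LSU} --- no sign condition on $\kappa$ is ever used. You need either this barrier or an equivalent device; as written the proposition is only proved for $0<\sigma<\sigma_2$.

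For $0<\sigma<\sigma_2$ your route is genuinely different from the paper's and essentially sound. You obtain convergence from monotonicity: $[0,A]$ is invariant for (\ref{eq:curvaturee}), so $V=A-\kappa\ge0$ and the graphs increase to a limit, which is then identified as the unique concave constant-curvature graph $\Gamma_*$. The paper never uses a sign on $V$; it extracts subsequential limits of time translates by compactness and kills $v_t$ with the Lyapunov functional $J[u]-A\int_{-a}^a u\,dx$, $J[u]=\int_{-a}^a\sqrt{1+u_x^2}\,dx$, which yields $\int_\epsilon^\infty\int_{-a}^a u_t^2(1+u_x^2)^{-1/2}\,dx\,dt<\infty$. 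Your monotonicity buys a more elementary identification of the limit and avoids the subsequence argument; the paper's Lyapunov argument buys indifference to the sign of the initial curvature and is reused in Section 5 for the non-graph regime. Two smaller points to tighten in your version: the step from ``total turning $<\pi$'' to ``remains a graph for all $t$'' deserves justification (the paper instead propagates the gradient bound by the maximum principle), and $\partial_t\rho_\sigma\ge0$ requires the remark that the upward normal of a star-shaped curve in the upper half-plane has positive radial component.
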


\begin{proof}
{\bf Step 1.} Upper bound.

 There exists $\sigma_2$ such that for all $\sigma<\sigma_2$, $\Gamma_*\succ\Gamma_{\sigma}$. Since $\Gamma_{\sigma}$ is represented by the graph of $\sigma\varphi$, then $\Gamma_{\sigma}(t)$ can locally be represented by the graph of some function $u_{\sigma}(x,t)$. Let $T_{\sigma}^g$ be the maximal time such that 
$$
\Gamma_{\sigma}(t)=\{(x,y)\mid y=u_{\sigma}(x,t)\},\ 0\leq t<T_{\sigma}^g.
$$
Therefore, $u_{\sigma}(x,t)$ satisfies
\begin{equation}\label{eq:drivinggraph}
\left\{
\begin{array}{lcl}
\dis{u_t=\frac{u_{xx}}{1+u_x^2}+A\sqrt{1+u_x^2}},\ -a<x<a,\ 0<t<T_{\sigma}^g,\\
u(-a,t)=u(a,t)=0,\ 0<t<T_{\sigma}^g,\\
u(x,0)=\sigma\varphi(x),\ -a\leq x\leq a.
\end{array}
\right.
\end{equation}

Since for all $\sigma<\sigma_2$ there holds $\sigma\varphi(x)\leq \sqrt{1/A^2-x^2}-\sqrt{1/A^2-a^2}$, $-a\leq x\leq a$, by comparison principle, 
\begin{equation}\label{eq:upb}
u_{\sigma}(x,t)< \sqrt{1/A^2-x^2}-\sqrt{1/A^2-a^2},\ -a< x< a, \ 0<t<T_{\sigma}^g.
\end{equation}

{\bf Step 2.} Lower bound and derivative estimate.

If $0\leq \sigma<\sigma_2$, by comparison principle, 
\begin{equation}\label{eq:downb1}
u_{\sigma}(x,t)>0,\ -a< x< a, \ 0<t<T_{\sigma}^g.
\end{equation}

Combining (\ref{eq:upb}) and (\ref{eq:downb1}), 
\begin{equation}\label{eq:db1}
-\frac{a}{\sqrt{1/A^2-a^2}}< u_{\sigma x}(a,t)<0\ \text{and}\ 0< u_{\sigma x}(-a,t)<\frac{a}{\sqrt{1/A^2-a^2}},\ 0<t<T_{\sigma}^g.
\end{equation}

Differentiating the first equation in (\ref{eq:drivinggraph}) by $x$ and combining boundary condition (\ref{eq:db1}), by maximum principle, 
\begin{equation}\label{eq:db2}
|u_{\sigma x}(x,t)|<\frac{a}{\sqrt{1/A^2-a^2}},\ -a\leq x\leq a, \ 0<t<T_{\sigma}^g.
\end{equation}

If $\sigma<0$, let $k>0$ satisfy $k:=\sigma\varphi^{\prime}(a)$. We denote function
$$
\underline{u}(x)=\max\{-k(x+a),k(x-a)\},\ -a\leq x\leq a.
$$
Obviously, $\underline{u}(x)\leq\sigma\varphi$, $-a\leq x\leq a$ and $\underline{u}$ is a sub-solution of (\ref{eq:drivinggraph}) in viscosity sense.

Therefore, by maximum principle, $u_{\sigma}(x,t)>\underline{u}(x)$, $-a<x<a$, $0<t<T_{\sigma}^g$. Combining (\ref{eq:upb}), we have
\begin{equation}\label{eq:db3}
|u_{\sigma x}(x,t)|<\max\{k,\frac{a}{\sqrt{1/A^2-a^2}}\},\ -a\leq x\leq a, \ 0<t<T_{\sigma}^g.
\end{equation}

Consequently, (\ref{eq:db2}) and (\ref{eq:db3}) imply that there exists $C_{\sigma}$ such that 
\begin{equation}\label{eq:dbc}
|u_{\sigma x}(x,t)|\leq C_{\sigma},\ -a\leq x\leq a, \ 0<t<T_{\sigma}^g.
\end{equation}

{\bf Step 3.} We prove the convergence in this step. 

By \cite{LSU}, for $\epsilon>0$, $u_{\sigma xx}(x,t)$ is bounded for all $-a\leq x\leq a,\ \epsilon\leq t<T_{\sigma}^g$. This means that $T_{\sigma}^g=\infty$. Therefore, by \cite{LSU} again, $u_{\sigma}(x,t)$, $u_{\sigma t}(x,t)$, $u_{\sigma tt}(x,t)$, $u_{\sigma x}(x,t)$, $u_{\sigma xx}(x,t)$ and $u_{\sigma xxx}(x,t)$ are all bounded for some constant $D_{\sigma}>0$, $-a\leq x\leq a,\ \epsilon\leq t<\infty$. For any sequence $t_n\rightarrow\infty$, there exist a subsequence $t_{n_j}$ and function $v(x,t)$ such that 
$$
u_{\sigma}(\cdot,\cdot+t_{n_j})\rightarrow v,\ \text{in}\ C^{2,1}([-a,a]\times[\epsilon,\infty)),
$$
as $j\rightarrow\infty$.

{\bf Step 4.} In this step, we introduce a Lyapunov function. 

Let 
$$
J[u]=\int_{-a}^a\sqrt{1+u_x^2}dx.
$$
If $u$ is a solution of (\ref{eq:drivinggraph}), we calculate
\begin{eqnarray*}
\frac{d}{dt}J[u]&=&\int_{-a}^a\frac{u_xu_{xt}}{\sqrt{1+u_x^2}}dx=-\int_{-a}^a \frac{u_tu_{xx}}{(1+u_x^2)^{3/2}}dx=-\int_{-a}^a\frac{(u_t)^2}{\sqrt{1+u_x^2}}dx +A\int_{-a}^au_tdx\\
&=&-\int_{-a}^a\frac{(u_t)^2}{\sqrt{1+u_x^2}}dx+A\frac{d}{dt}\int_{-a}^audx.
\end{eqnarray*}
Therefore, there hold
$$
J[u(\cdot,t)]\leq J[u(\cdot,\epsilon)]+A\int_{-a}^au(x,t)dx-A\int_{-a}^au(x,\epsilon)dx
$$
and
$$
\int_{\epsilon}^{\infty}\int_{-a}^a\frac{(u_t)^2}{\sqrt{1+u_x^2}}dxdt=A\lim\limits_{t\rightarrow\infty}\int_{-a}^au(x,t)dx-A\int_{-a}^au(x,\epsilon)dx+J[u(\cdot,\epsilon)]-\lim\limits_{t\rightarrow \infty}J[u(\cdot,t)].
$$

{\bf Step 5.} Using the Lyapunov function we complete the proof. 

For $u_{\sigma}$ given by above, $u_{\sigma}(x,t)$ is uniformly bounded for $-a\leq x\leq a$, $0<t<\infty$. Then the integral 
$$
|\int_{-a}^au_{\sigma}(x,t)dx|
$$
is bounded for $0<t<\infty$. Consequently, $J[u_{\sigma}(\cdot,t)]$ is bounded for $0<t<\infty$. Therefore, the integral
$$
\int_{\epsilon}^{\infty}\int_{-a}^a\frac{(u_{\sigma t})^2}{\sqrt{1+u_{\sigma x}^2}}dxdt
$$
is integrable. Then for all $s_0>0$,
$$
\int_{s_0}^{s_0+1}\int_{-a}^a\frac{(u_{\sigma t})^2}{\sqrt{1+u_{\sigma x}^2}}(x,t+t_{n_j})dxdt=\int_{s_0+t_{n_j}}^{s_0+1+t_{n_j}}\int_{-a}^a\frac{(u_{\sigma t})^2}{\sqrt{1+u_{\sigma x}^2}}(x,t)dxdt\rightarrow0
$$
as $j\rightarrow\infty$. Seeing $u_{\sigma}(\cdot,\cdot+t_{n_j})\rightarrow v$, in $\ C^{2,1}([-a,a]\times[\epsilon,\infty))$, as $j\rightarrow\infty$, we have
$$
\int_{s_0}^{s_0+1}\int_{-a}^a\frac{(v_{ t})^2}{\sqrt{1+v_x^2}}(x,t)dxdt=0.
$$
Then $v_t(x,t)=0$, for all $-a\leq x\leq a$, $s_0\leq t\leq s_0+1$. Seeing that the choice of $s_0$ is arbitrary, 
$$
v_t(x,t)=0,
$$
for all $-a\leq x\leq a$, $0<t<\infty$. Therefore, $v$ is independent on $t$ and is a stationary solution of (\ref{eq:drivinggraph}). Then 
$$
v=\sqrt{1/A^2-x^2}-\sqrt{1/A^2-a^2},\ -a\leq x\leq a.
$$ 

Here we get that for any sequence $t_n\rightarrow\infty$, there exists a subsequence $t_{n_j}$ such that 
$$
u_{\sigma}(\cdot,\cdot+t_{n_j})\rightarrow v,\ \text{in}\ C^{2,1}([-a,a]\times[\epsilon,\infty)),
$$
as $j\rightarrow\infty$. Consequently, 
$$
u_{\sigma}(\cdot,t)\rightarrow v,\ \text{in}\ C^{2}([-a,a]),
$$
as $t\rightarrow\infty$.

The proof of this proposition is completed.
\end{proof}

\section{Asymptotic behavior for the condition (3) in Lemma \ref{lem:sigulartime}}
Seeing Lemma \ref{lem:regular} and the proof of Lemma \ref{lem:sigulartime}, under the condition (3) in Lemma \ref{lem:sigulartime} we can assume there exists $\rho_{\sigma}$ such that
$$
\Gamma_{\sigma}(t)=\{(\rho_{\sigma}(\theta,t)\cos\theta,\rho_{\sigma}(\theta,t)\sin\theta)\mid0\leq \theta\leq \pi\},\ 0\leq t<\infty.
$$
Moreover $\rho_{\sigma}$ satisfies (\ref{eq:polarco}) for $T_{\sigma}=\infty$.

\begin{lem}\label{lem:ly}
Let $L_{\sigma}(t)$ be the length of $\Gamma_{\sigma}(t)$ and $S_{\sigma}(t)$ be the area of the domain surrounded by $\Gamma_{\sigma}(t)$ and $y=0$. Then,
\begin{equation}\label{eq:ly2}
\frac{d}{dt}L_{\sigma}(t)=-\int_{0}^{L_{\sigma}(t)}(\kappa-A)^2ds+A\frac{d}{dt}S_{\sigma}(t).
\end{equation}
\end{lem}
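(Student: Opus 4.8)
The plan is to derive (\ref{eq:ly2}) by combining the first variation of arc length with the first variation of enclosed area, both read off directly from the polar equation (\ref{eq:polarco}). Throughout, work in the regime of condition (3) of Lemma \ref{lem:sigulartime}, so that $\Gamma_\sigma(t)=\{(\rho_\sigma(\theta,t)\cos\theta,\rho_\sigma(\theta,t)\sin\theta)\mid 0\le\theta\le\pi\}$ with $\rho_\sigma$ solving (\ref{eq:polarco}) on $[0,\pi]\times(0,\infty)$; by the quasilinear theory of \cite{LSU}, $\rho_\sigma(\cdot,t)$ is smooth for every $t>0$, so all the $t$- and $\theta$-differentiations below are justified. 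Note that, because the two radii of $\Gamma_\sigma(t)$ at $\theta=0$ and $\theta=\pi$ together make up the segment $\{y=0,\ -a\le x\le a\}$, one has $L_\sigma(t)=\int_0^\pi\sqrt{\rho_\sigma^2+\rho_{\sigma\theta}^2}\,d\theta$ and $S_\sigma(t)=\frac12\int_0^\pi\rho_\sigma^2\,d\theta$, and the fixed endpoints $P,Q$ amount to $\rho_\sigma(0,t)=\rho_\sigma(\pi,t)=a$ for all $t$, hence $\rho_{\sigma t}(0,t)=\rho_{\sigma t}(\pi,t)=0$.

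First I would compute $\frac{d}{dt}S_\sigma(t)=\int_0^\pi\rho_\sigma\rho_{\sigma t}\,d\theta$. Multiplying (\ref{eq:polarco}) by $\rho_\sigma$ and using the polar expression for curvature, $\kappa=\big(\rho_\sigma^2+2\rho_{\sigma\theta}^2-\rho_\sigma\rho_{\sigma\theta\theta}\big)\big(\rho_\sigma^2+\rho_{\sigma\theta}^2\big)^{-3/2}$, one finds $\rho_\sigma\rho_{\sigma t}=(-\kappa+A)\sqrt{\rho_\sigma^2+\rho_{\sigma\theta}^2}$; since $-\kappa+A$ is the (upward) normal velocity and $\sqrt{\rho_\sigma^2+\rho_{\sigma\theta}^2}\,d\theta=ds$, this gives
\[
\frac{d}{dt}S_\sigma(t)=\int_0^{L_\sigma(t)}(-\kappa+A)\,ds .
\]
Conceptually this is just the transport theorem for the area of the region between $\Gamma_\sigma(t)$ and the stationary segment $\{y=0\}$: the segment contributes nothing, and the outer unit normal of that region along $\Gamma_\sigma(t)$ is the upward normal.

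Next I would compute $\frac{d}{dt}L_\sigma(t)=\int_0^\pi\frac{\rho_\sigma\rho_{\sigma t}+\rho_{\sigma\theta}\rho_{\sigma\theta t}}{\sqrt{\rho_\sigma^2+\rho_{\sigma\theta}^2}}\,d\theta$ and integrate the second summand by parts in $\theta$. The boundary terms vanish precisely because $\rho_{\sigma t}=0$ at $\theta=0,\pi$ (the analytic form of the fixed-endpoint condition (\ref{eq:fixbound})). A short computation simplifies the remaining integrand to $\kappa\,\rho_\sigma\rho_{\sigma t}$, so that, using again $\rho_\sigma\rho_{\sigma t}=(-\kappa+A)\sqrt{\rho_\sigma^2+\rho_{\sigma\theta}^2}$,
\[
\frac{d}{dt}L_\sigma(t)=-\int_0^{L_\sigma(t)}\kappa\,(\kappa-A)\,ds .
\]

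Finally, the algebraic identity $-\kappa(\kappa-A)=-(\kappa-A)^2+A(-\kappa+A)$, together with the area formula of the second step substituted into $A\int_0^{L_\sigma(t)}(-\kappa+A)\,ds$, yields
\[
\frac{d}{dt}L_\sigma(t)=-\int_0^{L_\sigma(t)}(\kappa-A)^2\,ds+A\,\frac{d}{dt}S_\sigma(t),
\]
which is (\ref{eq:ly2}). The only places demanding care are bookkeeping: pinning down the sign of $\kappa$ so that it agrees both with the convention of Remark \ref{rem:direction} (equivalently, with $\kappa\equiv A$ on $\Gamma^*$) and with the polar curvature formula, and verifying that the integration-by-parts boundary terms really drop out — which is where the fixed extreme points enter. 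The rest is routine differentiation under the integral sign, legitimate by the smoothness of $\rho_\sigma$ for $t>0$ recorded above.
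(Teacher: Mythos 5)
Your proof is correct, but it takes a genuinely different route from the paper's. The paper quotes the general first variation of arc length $\frac{d}{dt}L_{\sigma}=\int(A\kappa-\kappa^2)\,ds$ from \cite{Z2} and then spends most of the proof establishing the identity $\int_0^{L_\sigma(t)}\frac{d}{dt}F\cdot(-N)\,ds=\frac{d}{dt}S_\sigma(t)$ by writing the area via Green's formula as $\frac12\int_{\gamma_\sigma(t)}F\cdot(-N)\,ds$, differentiating under the integral, invoking the evolution law $\frac{d}{dt}N=-\kappa_s T$, and integrating by parts with the boundary data $\kappa=A$ at the extreme points. You instead exploit the polar representation available under condition (3) of Lemma \ref{lem:sigulartime}: the sector formula $S_\sigma=\frac12\int_0^\pi\rho_\sigma^2\,d\theta$ makes the area variation $\frac{d}{dt}S_\sigma=\int V\,ds$ a one-line consequence of $\rho_\sigma\rho_{\sigma t}=(-\kappa+A)\sqrt{\rho_\sigma^2+\rho_{\sigma\theta}^2}$, and the length variation $\frac{d}{dt}L_\sigma=-\int\kappa(\kappa-A)\,ds$ follows from one integration by parts in $\theta$, whose boundary terms vanish because $\rho_{\sigma t}=0$ at $\theta=0,\pi$; the algebraic identity $-\kappa(\kappa-A)=-(\kappa-A)^2+A(-\kappa+A)$ then closes the argument. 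I checked the polar curvature bookkeeping against (\ref{eq:polarco}) and it is consistent with the paper's sign convention ($\kappa=A$ on $\Gamma_*$ and $\Gamma^*$). What your approach buys is self-containedness and brevity — you need neither the cited variation formulas nor the Frenet/Green's-formula manipulation — at the mild cost of being tied to the polar parametrization, whereas the paper's intrinsic computation would survive verbatim in any setting where $\Gamma_\sigma(t)$ is a graph over a fixed reference curve; since the lemma is only ever applied under condition (3), where polar representability is already established, this costs nothing here.
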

\begin{rem}
(1). Noting that under the condition (3) in Lemma \ref{lem:sigulartime}, $\Gamma_{\sigma}(t)$ located in $\{y\geq0\}$, the definition of $S_{\sigma}(t)$ is well-defined.

(2). The result of this lemma is a general condition for the Lyapunov function in the proof of Proposition \ref{prop:sigmasmall}.
\end{rem}

\begin{proof}
Seeing the calculation in \cite{Z2}, 
$$
\frac{d}{dt}L_{\sigma}(t)=\int_0^{L_{\sigma}(t)}(A\kappa-\kappa^2)ds.
$$
Recall $N$ being the unit downward normal vector. Therefore, 
\begin{eqnarray*}
\frac{d}{dt}L_{\sigma}(t)&=&-\int_0^{L_{\sigma}(t)}(\kappa-A)^2ds+\int_0^{L_{\sigma}(t)}(-A\kappa+A^2)ds=-\int_0^{L_{\sigma}(t)}(\kappa-A)^2ds\\
&+&A\int_0^{L_{\sigma}(t)}\frac{d}{dt}F(s,t)\cdot(-N)ds,
\end{eqnarray*}
where $F$ is the point on the curve $\Gamma_{\sigma}(t)$ and for convenience, we omit the subscript of $F_{\sigma}(s,t)$. Let 
$$
\gamma_{\sigma}(t)=\Gamma_{\sigma}(t)\cup\{(x,y)\mid y=0,-a\leq x\leq a\}.
$$
By Green's formula,
\begin{eqnarray*}
\frac{d}{dt}S_{\sigma}(t)=\frac{1}{2}\frac{d}{dt}\int_{\gamma_{\sigma}(t)}F(s,t)\cdot(-N)ds,
\end{eqnarray*}
where $F$ is the point on the curve $\gamma_{\sigma}(t)$ and $N$ is the unit inner normal vector.
Since the curve
$$
\{(x,y)\mid y=0,-a\leq x\leq a\}
$$
is independent on $t$, 
$$
\frac{1}{2}\frac{d}{dt}\int_{\gamma_{\sigma}(t)}F(s,t)\cdot(-N)ds=\frac{1}{2}\frac{d}{dt}\int_{\Gamma_{\sigma}(t)}F(s,t)\cdot(-N)ds=\frac{1}{2}\frac{d}{dt}\int_0^{L_{\sigma}(t)}F(s,t)\cdot(-N)ds.
$$
By calculation, 
\begin{eqnarray*}
\frac{1}{2}\frac{d}{dt}\int_0^{L_{\sigma}(t)}F(s,t)\cdot(-N)ds&=&\frac{1}{2}\int_0^{L_{\sigma}(t)}\frac{d}{dt}F(s,t)\cdot(-N)ds+\frac{1}{2}\int_0^{L_{\sigma}(t)}F(s,t)\cdot(-\frac{d}{dt}N)ds\\
&+&\frac{1}{2}\int_0^{L_{\sigma}(t)}F(s,t)\cdot(-N)(A\kappa-\kappa^2)ds.
\end{eqnarray*}
Seeing the calculation in \cite{Z2}, 
$$
\frac{d}{dt}N=-\frac{\partial\kappa}{\partial s}T,
$$
where $T=F_s$--the unit tangential vector, then
\begin{eqnarray*}
\frac{1}{2}\int_0^{L_{\sigma}(t)}F(s,t)\cdot(-\frac{d}{dt}N)ds=\frac{1}{2}\int_0^{L_{\sigma}(t)}F(s,t)\cdot(\frac{\partial\kappa}{\partial s}T)ds:=I.
\end{eqnarray*}
Seeing the symmetry of $\Gamma_{\sigma}(t)$ and $\kappa(0,t)=\kappa(L_{\sigma}(t),t)=A$, at the boundary,
$$
F(0,t)\cdot T(0,t)\kappa(0,t)=F(L_{\sigma}(t),t)\cdot T(L_{\sigma}(t),t)\kappa(L_{\sigma}(t),t)=AF(0,t)\cdot T(0,t).
$$
Integrating $I$ by parts, there holds
\begin{eqnarray*}
I&=&-\frac{1}{2}\int_0^{L_{\sigma}(t)}F_s\cdot T\kappa ds-\frac{1}{2}\int_0^{L_{\sigma}(t)}F\cdot T_{s}\kappa ds=-\frac{1}{2}\int_0^{L_{\sigma}(t)}\kappa ds\\
&-&\frac{1}{2}\int_0^{L_{\sigma}(t)}F\cdot F_{ss}\kappa ds=-\frac{1}{2}\int_0^{L_{\sigma}(t)}\kappa ds-\frac{1}{2}\int_0^{L_{\sigma}(t)}F\cdot N \kappa^2 ds.
\end{eqnarray*}
Therefore, 
\begin{eqnarray*}
\frac{1}{2}\frac{d}{dt}\int_0^{L_{\sigma}(t)}F(s,t)\cdot(-N)ds&=&\frac{1}{2}\int_0^{L_{\sigma}(t)}\frac{d}{dt}F(s,t)\cdot(-N)ds-\frac{1}{2}\int_0^{L_{\sigma}(t)}\kappa ds\\
&-&\frac{1}{2}\int_0^{L_{\sigma}(t)}AF(s,t)\cdot(\kappa N)ds=\frac{1}{2}\int_0^{L_{\sigma}(t)}\frac{d}{dt}F(s,t)\cdot(-N)ds\\
&-&\frac{1}{2}\int_0^{L_{\sigma}(t)}\kappa ds-\frac{1}{2}\int_0^{L_{\sigma}(t)}AF(s,t)\cdot(T_{s})ds\\
&=&\frac{1}{2}\int_0^{L_{\sigma}(t)}\frac{d}{dt}F(s,t)\cdot(-N)ds+\frac{1}{2}\int_0^{L_{\sigma}(t)}(A-\kappa)ds\\
&=&\int_0^{L_{\sigma}(t)}\frac{d}{dt}F(s,t)\cdot(-N)ds.
\end{eqnarray*}
In the last second equality, we use integral by parts. Therefore, 
$$
\int_0^{L_{\sigma}(t)}\frac{d}{dt}F(s,t)\cdot(-N)ds=\frac{d}{dt}S_{\sigma}(t).
$$
Consequently, (\ref{eq:ly2}) holds.
\end{proof}

\begin{lem}\label{lem:rhob}
 Under the condition (3) in Lemma \ref{lem:sigulartime}, $SGN(\Gamma_{\sigma}(t),\Gamma^*)=[-\ +\ -]$ for $t<\infty$, there exist $\rho_2>\rho_1>0$ such that 
$$
\rho_1<\rho_{\sigma}(\theta,t)<\rho_2,
$$
for $0\leq \theta\leq\pi$, $0<t<\infty$.
\end{lem}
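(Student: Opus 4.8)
The plan is to bound $\rho_\sigma$ both ways by combining the convexity of $\Gamma_\sigma(t)$ with the rigidity forced by condition (3). First I would record the geometry. Since $\kappa_\sigma(\cdot,0)\ge 0$ and, by Lemma~\ref{lem:curvaturee} and the maximum principle (exactly as in the proof of Lemma~\ref{lem:regular}), $\kappa_\sigma>0$ for $t>0$, the curve $\Gamma_\sigma(t)$ together with the segment $PQ$ bounds a convex domain $\Omega_\sigma(t)$, symmetric about the $y$-axis, contained in $\{y\ge 0\}$ (because $\Gamma_\sigma(t)\succ\Lambda_0$), with highest point $(0,y_\sigma(t))$; moreover $\overline{\Omega_\sigma(0)}\subset\{|x|\le a\}$ since the initial curve is the graph of $\sigma\varphi$ over $[-a,a]$. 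On the part of $\Gamma_\sigma(t)$ lying over $[-a,a]$, $\Gamma_\sigma(t)$ is the graph of a concave function $h(\cdot,t)$ with $h(\pm a,t)=0$, $h(0,t)=y_\sigma(t)$, hence $h(x,t)\ge y_\sigma(t)\bigl(1-|x|/a\bigr)$.

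For the lower bound I would combine this concavity estimate with condition (3). Since $SGN(\Gamma_\sigma(t),\Gamma^*)=[-\ +\ -]$ and the middle arc (which by symmetry contains the direction $\theta=\pi/2$) carries the sign $+$, one has $y_\sigma(t)=\rho_\sigma(\pi/2,t)>\rho_{\Gamma^*}(\pi/2)=1/A+\sqrt{1/A^2-a^2}=:y_1$. Minimizing $\sqrt{x^2+h(x,t)^2}$ over $|x|\le a$ (and observing that any point of $\Gamma_\sigma(t)$ with $|x|>a$ automatically has $\rho>a$) then gives $\rho_\sigma(\theta,t)\ge a y_1/\sqrt{a^2+y_1^2}=:\rho_1>0$, uniformly in $\theta$ and $t$.

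The upper bound I would split into a height estimate and a width estimate. For the height I claim $y_\sigma(t)\le H_0$ for a fixed $H_0$, by contradiction: if $y_\sigma(t_0)>H_0$, take the Grim-reaper sub-solution $\gamma_G$ of Lemma~\ref{lem:grimreaper} with $b<2a/\pi$ and peak $C$ chosen as in Lemma~\ref{lem:speed} (so the escape time $t^*$ is finite and $G(0,t)>K$ for $t<t^*$), and set $H_0:=Ca/(a-x(0))$, where $x(0)<b\pi/2$ is the zero of $G(\cdot,0)$. Then $h(x,t_0)\ge y_\sigma(t_0)(1-|x|/a)>H_0(1-|x|/a)\ge C\ge G(x,0)$ for $|x|\le x(0)$, so $\Gamma_\sigma(t_0)\succ\gamma_G(0)$; running the flow from time $t_0$ (allowed since $T_\sigma=\infty>t^*$), the argument of Proposition~\ref{prop:sigmalarge} forces $\Gamma_\sigma(t)\succ\Gamma^*$, i.e.\ $SGN(\Gamma_\sigma(t),\Gamma^*)=[+]$, for all large $t$, contradicting condition (3). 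For the width I would use the translated stationary circles $\mathcal{C}_{c,Y}=\partial B_{1/A}((c,Y))$: for $c>a+2/A$ such a circle lies in $\{x>a+1/A\}$, hence is disjoint from $\overline{\Omega_\sigma(0)}$ and from $PQ$, so by the intersection number principle $Z[\Gamma_\sigma(t),\mathcal{C}_{c,Y}]\equiv 0$; since $PQ$ never meets $\mathcal{C}_{c,Y}$, a connectedness argument (using that $\overline{\Omega_\sigma(t)}$ is convex with the far-left points $(\pm a,0)$ on its boundary, so it can neither touch nor engulf $\overline{B_{1/A}((c,Y))}$) shows $\overline{\Omega_\sigma(t)}\cap\overline{B_{1/A}((c,Y))}=\emptyset$ for every such $c,Y$ and every $t$. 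Taking the union of these disks gives $\overline{\Omega_\sigma(t)}\subset\{x\le a+1/A\}$, and by symmetry $\overline{\Omega_\sigma(t)}\subset\{|x|\le a+1/A\}$. Together with $y_\sigma(t)\le H_0$ this confines $\Gamma_\sigma(t)$ to the fixed box $[-(a+1/A),a+1/A]\times[0,H_0]$, so $\rho_\sigma(\theta,t)\le\sqrt{(a+1/A)^2+H_0^2}=:\rho_2$.

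The main obstacle is the height estimate: it is the only step where the standing hypothesis of condition (3) is genuinely used, and it requires welding together the Grim-reaper barrier, the circle construction of Proposition~\ref{prop:sigmalarge}, and the device of restarting the flow at an arbitrary time $t_0$ into a single contradiction. The topological ``no engulfing'' claim used in the width estimate is the second point that needs to be checked carefully.
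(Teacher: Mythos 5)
Your proposal is correct, and its central mechanism -- deriving a contradiction with $SGN(\Gamma_{\sigma}(t),\Gamma^*)=[-\ +\ -]$ by showing that an unbounded curve would dominate a Grim reaper and hence, via the expanding-circle construction of Proposition \ref{prop:sigmalarge}, satisfy $\Gamma_{\sigma}(t)\succ\Gamma^*$ in finite time -- is exactly the paper's key step. Where you diverge is in the auxiliary bounds, and in both places your version is more explicit than the paper's. For the upper bound, the paper reduces everything to the height by arguing that a bounded apex $\rho_{\sigma}(\pi/2,t)$ together with an unbounded curve would force $\kappa_{\sigma}\leq 0$ somewhere (contradicting the positivity of curvature), whereas you prove a width bound directly by avoidance against the translated stationary circles $\partial B_{1/A}((c,Y))$; this works, but note that Theorem \ref{thm:intersection} as stated applies only to two flows with the common fixed endpoints $P,Q$, so for a closed circle you should invoke the avoidance/maximum principle directly (as the paper itself does for $\Sigma(t)$ in Proposition \ref{prop:sigmalarge}) rather than the intersection number principle by name. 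For the lower bound, the paper cites only $\Gamma_{\sigma}(t)\succ\Lambda_0$, which by itself does not give a $t$-uniform bound; your argument -- the middle ``$+$'' arc forces $y_{\sigma}(t)>1/A+\sqrt{1/A^2-a^2}$, and concavity of the graph over $[-a,a]$ then bounds the distance to the origin from below -- is the rigorous version of that step and is a genuine improvement in precision. All the geometric facts you rely on (positivity of $\kappa_{\sigma}$, convexity of $\Omega_{\sigma}(t)$, symmetry, $T_{\sigma}=\infty$ under condition (3)) are available from Lemmas \ref{lem:curvaturee}, \ref{lem:regular} and \ref{lem:sigulartime}, so I see no gap.
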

\begin{proof}
First, we prove $\rho_{\sigma}<\rho_2$. We prove this by contradiction, assuming $\rho_{\sigma}$ is not bounded from above.

If $\rho_{\sigma}(\pi/2,t)$ is bounded for all $t$, we can easily prove that there exist some $0<\theta_0<\pi/2$ and $t_0$ such that $\kappa_{\sigma}(\theta_0,t_0)\leq 0$. This contradicts to that $\kappa_{\sigma}(\theta,t)>0$, for all $0<\theta<\pi$, $t<\infty$.

Therefore, $\rho_{\sigma}(\pi/2,t)$ is not bounded. Assume for some $t_0$, $\rho_{\sigma}(\pi/2,t_0)$ is large enough. We can use the Grim reaper argument as in Proposition \ref{prop:sigmalarge} to prove that $\Gamma_{\sigma}(t)\succ\Gamma^*$ in finite time. This contradicts to that $SGN(\Gamma_{\sigma}(t),\Gamma^*)=[-\ +\ -]$ for $t<\infty$. Therefore, there exists $\rho_2>0$ such that $\rho_{\sigma}(\theta,t)<\rho_2$,  for all $0<\theta<\pi$, $t<\infty$. 

On the other hand, we note that $\Gamma_{\sigma}\succ \Lambda_0=\{(x,y)\mid y=0,\ -a\leq x\leq a\}$, $0<t<\infty$. Then there exists $\rho_1>0$ such that $\rho_{\sigma}(\theta,t)>\rho_1$, $0<\theta<\pi$, $t<\infty$. 

We complete the proof.
\end{proof}
Here we give the asymptotic behavior under the condition (3) in Lemma \ref{lem:sigulartime}.
\begin{prop}\label{pro:asymup}
 Under the condition (3) in Lemma \ref{lem:sigulartime}, $SGN(\Gamma_{\sigma}(t),\Gamma^*)=[-\ +\ -]$ for $t<\infty$, 
$$
\Gamma_{\sigma}(t)\rightarrow \Gamma^*\ \text{in}\ C^1
$$
as $t\rightarrow\infty$.
\end{prop}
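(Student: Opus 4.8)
The strategy parallels the argument for Proposition \ref{prop:sigmasmall}, but carried out in polar coordinates because under condition (3) the curve $\Gamma_\sigma(t)$ need not be a graph over the $x$-axis. By Lemma \ref{lem:rhob} we already have two-sided bounds $\rho_1 < \rho_\sigma(\theta,t) < \rho_2$ uniform in $t$, and by Lemma \ref{lem:regular} together with the proof of Lemma \ref{lem:sigulartime} the function $\rho_\sigma$ solves (\ref{eq:polarco}) globally. First I would feed these uniform bounds into the quasilinear parabolic theory of \cite{LSU} to obtain, for each $\epsilon>0$, uniform estimates on $\rho_\sigma$ in $C^{2+\alpha,1+\alpha/2}([0,\pi]\times[\epsilon,\infty))$; this gives precompactness of the time-translates $\rho_\sigma(\cdot,\cdot+t_n)$ in $C^{2,1}_{loc}$, so along a subsequence $t_{n_j}\to\infty$ we get a limit $v(\theta,t)$ solving (\ref{eq:polarco}) on $[0,\pi]\times[\epsilon,\infty)$ with $v(0,t)=v(\pi,t)=a$.

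\smallskip

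Second, I would invoke the Lyapunov functional. Lemma \ref{lem:ly} gives
$$
\frac{d}{dt}L_\sigma(t) = -\int_0^{L_\sigma(t)}(\kappa-A)^2\,ds + A\frac{d}{dt}S_\sigma(t),
$$
and since $\Gamma_\sigma(t)$ stays in $\{y\ge 0\}$ with $\rho_\sigma$ bounded above and below, both $L_\sigma(t)$ and $S_\sigma(t)$ are bounded uniformly in $t$. Hence $L_\sigma(t) - A S_\sigma(t)$ is a bounded monotone-up-to-a-bounded-term quantity, and integrating the identity shows
$$
\int_\epsilon^\infty \int_0^{L_\sigma(t)}(\kappa-A)^2\,ds\,dt < \infty.
$$
Therefore $\int_{s_0}^{s_0+1}\int_0^{L_\sigma(t+t_{n_j})}(\kappa-A)^2\,ds\,dt \to 0$; passing to the $C^{2,1}$ limit $v$ and translating the integrand into polar coordinates, the curvature of the limit curve satisfies $\kappa_v \equiv A$ for all time, so $v$ is a stationary solution of (\ref{eq:meancur}) with the fixed boundary condition. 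The stationary solutions with these extreme points and with $\kappa\equiv A$ are exactly arcs of circles of radius $1/A$ through $P$ and $Q$, i.e. (portions of) $\Gamma_*$ or $\Gamma^*$ — here I would record that only $\Gamma_*$ and $\Gamma^*$ arise since $v$ must be a simple curve.

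\smallskip

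Third — and this is the step where the hypothesis $SGN(\Gamma_\sigma(t),\Gamma^*)=[-\ +\ -]$ does the real work — I must rule out $v=\Gamma_*$ and identify $v=\Gamma^*$. Since $SGN(\Gamma_\sigma(t),\Gamma^*)=[-\ +\ -]$ for every $t$, the curve $\Gamma_\sigma(t)$ always has points strictly above $\Gamma^*$ (the ``$+$'' middle arc); in particular the point $(0,y_\sigma(t))$ on the $y$-axis satisfies $y_\sigma(t) \geq \sqrt{1/A^2} + \sqrt{1/A^2-a^2} = $ the height of $\Gamma^*$ on the axis. Since $\Gamma_* $ lies strictly below $\Gamma^*$, the limit $v$ cannot be $\Gamma_*$; being a circular arc of radius $1/A$ through $P,Q$ lying (weakly) on or above $\Gamma^*$ on the axis and being a simple curve, $v$ must equal $\Gamma^*$. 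Because the limit is the same ($\Gamma^*$) along every subsequence, the full family converges: $\rho_\sigma(\cdot,t)\to\rho^*$ in $C^2([0,\pi])$ as $t\to\infty$, which is exactly $\Gamma_\sigma(t)\to\Gamma^*$ in $C^1$ in the sense of Definition \ref{def:C1close} (take $M=\Gamma^*$ and parametrize by the common polar angle).

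\smallskip

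The main obstacle I anticipate is the third step: turning the combinatorial/topological information $SGN=[-\ +\ -]$ into a clean geometric statement that pins down the limit. One must be careful that in the limit the number of intersections could drop (the ``$\pm$'' word could degenerate, since by Remark \ref{rem:nothold} the intersection count is not monotone for this flow in general), so I would argue instead via the pointwise lower barrier on $y_\sigma(t)$: the middle ``$+$'' arc forces $\Gamma_\sigma(t)$ to meet the $y$-axis at or above $\Gamma^*$, and this inequality passes to the limit, excluding $\Gamma_*$. The rest — compactness, the Lyapunov computation, and the classification of stationary solutions — is routine given Lemmas \ref{lem:ly}, \ref{lem:rhob}, \ref{lem:curvaturee} and the estimates of \cite{LSU}.
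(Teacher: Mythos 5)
Your proposal follows essentially the same route as the paper: uniform bounds from Lemma \ref{lem:rhob} plus the estimates of \cite{LSU} give precompactness of time-translates, the Lyapunov identity of Lemma \ref{lem:ly} with bounded $L_\sigma$ and $S_\sigma$ forces $\kappa\equiv A$ on any subsequential limit, and the limit is then one of the two circular arcs $\Gamma_*$, $\Gamma^*$. Your exclusion of $\Gamma_*$ via the persistent lower bound $y_\sigma(t)\geq 1/A+\sqrt{1/A^2-a^2}$ coming from the middle ``$+$'' arc is just a pointwise rephrasing of the paper's observation that $\gamma_r=\Gamma_*$ would force $\Gamma^*\succ\Gamma_\sigma(t)$ for large $t$, contradicting $SGN(\Gamma_\sigma(t),\Gamma^*)=[-\ +\ -]$; the argument is correct.
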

\begin{proof}
Since $\rho_1<\rho_{\sigma}<\rho_2$ and $\rho_{\sigma}$ satisfies (\ref{eq:polarco}), then there exists $\epsilon>0$ such that $\rho_{\sigma t}$, $\rho_{\sigma tt}$, $\rho_{\sigma\theta}$, $\rho_{\sigma\theta\theta}$ and $\rho_{\sigma\theta\theta\theta}$ are bounded for $0\leq \theta\leq \pi$, $\epsilon \leq t<\infty$. Therefore for any $t_n\rightarrow\infty$, there exist a subsequence $t_{n_j}$ and a function $r(\theta,t)$ such that
$$
\rho_{\sigma}(\cdot,\cdot+t_{n_j})\rightarrow r\ \text{in}\ C^{2,1}([0,\pi]\times[\epsilon,\infty)),
$$
as $j\rightarrow\infty$. Let 
$$
\gamma_{r}(t)=\{(x,y)\mid x=r(\theta,t)\cos\theta,\ y=r(\theta,t)\sin\theta,\ 0\leq\theta\leq \pi\},
$$
and the curvature $\kappa_{\sigma}(\cdot,t)$ be the curvature on $\Gamma_{\sigma}(t)$, $\kappa_{r}(\cdot,t)$ be the curvature on $\gamma_{r}(t)$. Therefore, $\kappa_{\sigma}(\cdot,\cdot+t_{n_j})\rightarrow \kappa_{r}$ in $C([0,\pi]\times[\epsilon,\infty))$. Obviously, the length $L_{\sigma}(t)$ and the area $S_{\sigma}(t)$ are bounded. Using the same argument in Proposition \ref{prop:sigmasmall} and the Lyapunov function in Lemma \ref{lem:ly}, we can deduce that $\kappa_{r}\equiv A$. Consequently, $r_t(\theta,t)=0$ for all $0\leq\theta\leq\pi$ and $t>0$. Seeing the curvature of $\gamma_{r}$ is a positive constant, $\gamma_{r}$ can only be a part of circle with radius $1/A$. Seeing $r(0,t)=-a$ and $r(\pi,t)=a$, then $\gamma_{r}=\Gamma^*$ or $\gamma_{r}=\Gamma_*$. But for all $t>0$,
$$
\Gamma_{\sigma}(t+t_{n_j})\rightarrow \gamma_{r}\ \text{in}\ C^1(\text{indeed,\ the\ convergence\ can\ be\ proved\ in\ $C^2$}),
$$
as $j\rightarrow\infty$.
If $\gamma_{r}=\Gamma_*$, then for $t$ large enough, $\Gamma^*\succ\Gamma_{\sigma}(t)$. This yields a contradiction. Therefore, $\gamma_{r}=\Gamma^*$. Consequently,
$$
\Gamma_{\sigma}(t)\rightarrow\Gamma^*\ \text{in}\ C^1(\text{indeed,\ the\ convergence\ can\ be\ proved\ in\ $C^2$}),
$$
as $t\rightarrow\infty$. 

Here we complete the proof.
\end{proof}

\section{Proof of Theorem \ref{thm:category}}
\begin{lem}\label{lem:sigmasmallc}
The set 
$$
B_*=\{\sigma\in \mathbb{R}\mid\Gamma_{\sigma}(t)\rightarrow\Gamma_*\ \text{in}\ C^1,\ \text{as}\ t\rightarrow\infty\}
$$
is open and connect.
\end{lem}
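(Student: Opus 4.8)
\emph{Plan.} I would exploit two structural features of the family $\{\Gamma_\sigma(t)\}$: its monotone dependence on $\sigma$ --- when $\varphi\not\equiv0$ one has $\varphi>0$ on $(-a,a)$, so $\Gamma_{\sigma'}\prec\Gamma_\sigma$ for $\sigma'<\sigma$ and hence, by the comparison principle (Theorem \ref{thm:comparison}), $\Gamma_{\sigma'}(t)\prec\Gamma_\sigma(t)$ for every $t$ in the common existence interval --- and continuous dependence on the initial curve (Lemma \ref{lem:localconti}). The plan is: (i) show $B_*$ is \emph{downward closed} in $\sigma$; since $(-\infty,\sigma_2)\subset B_*$ by Proposition \ref{prop:sigmasmall} while $B_*\cap(\sigma_1,\infty)=\emptyset$ by Proposition \ref{prop:sigmalarge} (there $\Gamma_\sigma(t)\succ\Gamma^*$ eventually, so $\Gamma_\sigma(t)\not\to\Gamma_*$ because $\Gamma_*\prec\Gamma^*$), this makes $B_*$ a non-empty interval bounded above, hence connected; (ii) prove the attractivity statement \emph{if $\Gamma^*\succ\Gamma_\sigma(t_0)$ for some $t_0$, then $\Gamma_\sigma(t)\to\Gamma_*$ in $C^1$}; (iii) combine (ii) with continuous dependence to show $\sigma^*:=\sup B_*\notin B_*$, which forces $B_*=(-\infty,\sigma^*)$ and yields openness.

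\emph{Step (i): downward closure.} We may assume $\varphi\not\equiv0$, for otherwise $\Gamma_\sigma\equiv\Lambda_0$ for all $\sigma$ and $B_*=\mathbb R$. Let $\sigma_0\in B_*$ and $\tau<\sigma_0$; if $\tau<\sigma_2$ we are done by Proposition \ref{prop:sigmasmall}, so assume $\tau\ge\sigma_2>0$ and fix $\sigma_3<\min(\tau,\sigma_2)$, so $\sigma_3\in B_*$. By monotonicity and comparison, $\Gamma_{\sigma_3}(t)\prec\Gamma_\tau(t)\prec\Gamma_{\sigma_0}(t)$ for $t<T_\tau$. Since $\tau>0$ and $\Lambda_0$ is a sub-solution, $\Gamma_\tau(t)\succ\Lambda_0$, so $\Gamma_\tau(t)$ has a polar representation $\rho_\tau(\theta,t)$; it is trapped below the bounded curve $\Gamma_{\sigma_0}(t)$ and above $\Gamma_{\sigma_3}(t)$, both of which converge to $\Gamma_*$ and hence stay bounded and bounded away from the origin for large $t$, so $\rho_\tau$ is pinched between a positive constant and a uniform bound, whence $T_\tau=\infty$ by the argument of Lemma \ref{lem:regular}. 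Then $\rho_\tau$ and its $\theta$-derivatives are uniformly bounded for $t\ge1$, so along any $t_n\to\infty$ a subsequence of $\rho_\tau(\cdot,\cdot+t_{n_j})$ converges in $C^{2,1}$ to some $r$; since $\rho_{\sigma_3}(\cdot,\cdot+t_{n_j})$ and $\rho_{\sigma_0}(\cdot,\cdot+t_{n_j})$ both converge to the polar graph $\rho_*$ of $\Gamma_*$ and pointwise ordering passes to the limit, $r\equiv\rho_*$. Hence $\Gamma_\tau(t)\to\Gamma_*$ in $C^1$, i.e.\ $\tau\in B_*$.

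\emph{Step (ii): the attractivity statement, and the main obstacle.} Suppose $\Gamma^*\succ\Gamma_\sigma(t_0)$. Comparison with the equilibria $\Gamma^*$ and $\Lambda_0$ keeps $\Gamma^*\succ\Gamma_\sigma(t)\succ\Lambda_0$ for $t\ge t_0$, so $\Gamma_\sigma(t)$ is polar-representable, with $\rho_\sigma$ pinched between positive constants for $t$ large (the lower bound as in the proof of Lemma \ref{lem:regular}, by comparison with the upward-moving flow issued from $\Lambda_0$; the upper bound from $\Gamma^*$), hence $T_\sigma=\infty$ and $\rho_\sigma$ together with its derivatives is uniformly bounded for $t\ge1$. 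Exactly as in Proposition \ref{pro:asymup}, one extracts a $C^{2,1}$ subsequential limit $r$ of $\rho_\sigma(\cdot,\cdot+t_{n_j})$ and uses the Lyapunov identity of Lemma \ref{lem:ly} (the length and the enclosed area being bounded) to force the limiting curvature to be $\equiv A$; the limit curve is then a circular arc of radius $1/A$ through $P=(-a,0)$ and $Q=(a,0)$, i.e.\ $\Gamma_*$ or $\Gamma^*$. \textbf{Ruling out the limit $\Gamma^*$ is the main difficulty}, and this is where I expect the real work to lie: $\Gamma^*$ is an \emph{unstable} equilibrium because it is a circular arc of radius $1/A$ strictly longer than a semicircle --- precisely as $a<1/A$ --- so that, writing $\ell^*$ for its length, the principal Dirichlet eigenvalue $A^2-\pi^2/(\ell^*)^2$ of the linearized operator $\partial_{ss}+A^2$ of the flow about $\Gamma^*$ is positive; using the associated positive eigenfunction as a moving barrier just below $\Gamma^*$, one shows that any solution lying strictly below $\Gamma^*$ (in particular $\Gamma_\sigma(t)$, since $\Gamma^*\succ\Gamma_\sigma(t)$ stays strict with exactly two intersection points) is driven away from $\Gamma^*$ and cannot accumulate there. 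Consequently every subsequential limit equals $\Gamma_*$, so $\Gamma_\sigma(t)\to\Gamma_*$ in $C^1$.

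\emph{Step (iii): openness.} Suppose, for contradiction, $\sigma^*\in B_*$; then $\sigma^*\ge\sigma_2>0$, $T_{\sigma^*}=\infty$, and $\Gamma_{\sigma^*}(t)\to\Gamma_*$ in $C^1$. As $\Gamma_*\prec\Gamma^*$ with a positive gap, choose $t_0$ with $SGN[\Gamma_{\sigma^*}(t_0),\Gamma^*]=[-]$, the two curves transverse at $P$ and $Q$; $\Gamma_{\sigma^*}$ is polar-representable with $\rho_{\sigma^*}$ bounded below on $[0,\pi]\times[0,t_0]$, so local existence (Proposition \ref{pro:localexist}) and continuous dependence (Lemma \ref{lem:localconti}) give, for $\sigma>\sigma^*$ sufficiently close to $\sigma^*$, a solution $\Gamma_\sigma$ on $[0,t_0]$ with $\Gamma_\sigma(t_0)\to\Gamma_{\sigma^*}(t_0)$ in $C^2$; the relation $SGN[\,\cdot\,,\Gamma^*]=[-]$ being open under these perturbations, $\Gamma^*\succ\Gamma_\sigma(t_0)$. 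Step (ii) then gives $\Gamma_\sigma(t)\to\Gamma_*$, i.e.\ $\sigma\in B_*$, contradicting $\sigma>\sup B_*$. Hence $\sigma^*\notin B_*$ and $B_*=(-\infty,\sigma^*)$ is open. The whole argument rests on the exclusion of $\Gamma^*$ in Step (ii), which is the hardest point.
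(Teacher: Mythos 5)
Your skeleton matches the paper's: non-emptiness from Proposition \ref{prop:sigmasmall}, an ordering argument for connectedness, and continuous dependence at a large time $t_0$ for openness. Two differences are worth recording. For connectedness you squeeze $\rho_\tau$ between two solutions already known to converge to $\Gamma_*$; the paper instead rules out condition (4) of Lemma \ref{lem:sigulartime} by comparison and re-runs the Lyapunov argument of Proposition \ref{prop:sigmasmall}. Your squeeze is clean and valid. More substantially, the point you flag as ``the main difficulty'' --- excluding the subsequential limit $\Gamma^*$ for a solution trapped strictly below $\Gamma^*$ --- does not require spectral analysis. The region strictly between $\Gamma_*$ and $\Gamma^*$ is foliated by circular arcs through $P$ and $Q$ of radius $R<1/A$; each has $-\kappa+A<0$ and is therefore a stationary strict super-solution. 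Once $\Gamma^*\succ\Gamma_\sigma(t_0)$ strictly, $\Gamma_\sigma(t_0)$ lies below one such arc $\gamma_1$ and hence $\Gamma_\sigma(t)\prec\gamma_1\prec\Gamma^*$ for all $t\ge t_0$, which keeps the $\omega$-limit set away from $\Gamma^*$ outright. This is exactly what the paper's shuttle neighbourhood $V$ with $\Gamma^*\succ\gamma_1\succ\Gamma_*$ is doing in Step 2 of its proof. Your eigenvalue computation ($A^2-\pi^2/(\ell^*)^2>0$ because $\Gamma^*$ is longer than a semicircle) is correct, and the instability argument can be completed, but it needs the solution written as a normal graph over $\Gamma^*$ and a connectedness-of-the-$\omega$-limit-set step that you do not supply; the barrier is a one-line substitute.

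The genuine gap is in Step (iii). Lemma \ref{lem:localconti} \emph{presupposes} that $\rho_n$ is a solution of (\ref{eq:polarco}) on $[0,T)$; it does not produce one. For $\sigma>\sigma^*$ you have no a priori control on $T_\sigma$ --- condition (4) of Lemma \ref{lem:sigulartime} allows $T_\sigma<\infty$, conceivably $T_\sigma<t_0$ --- nor on polar-representability up to time $t_0$, so the appeal to ``local existence and continuous dependence'' to obtain $\Gamma_\sigma$ on all of $[0,t_0]$ asserts precisely what must be proved. This is what Step 1 of the paper's proof supplies: the quantity $\tau_0(\sigma)=\sup\{\tau\mid \frac{d}{ds}F_\sigma(0,t)\cdot(0,1)>0,\ 0<t<\tau\}$ is monotone in $\sigma$, Lemma \ref{lem:regular} converts $\tau_0(\sigma)>t$ into regularity and polar-representability up to $t$, and a limiting argument gives $\lim_{\sigma\downarrow\sigma^*}\tau_0(\sigma)=\infty$. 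Your step can be repaired in the same way, or by an open--closed continuation argument in $t$ combining Lemma \ref{lem:regular} with Lemma \ref{lem:localconti}, but as written the application of continuous dependence at $t_0$ is unjustified. (Step (i) does not suffer from this: there the solution is pinched between two globally defined ordered solutions, so the continuation is routine, as you indicate.)
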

\begin{proof}
Proposition \ref{prop:sigmasmall} implies that $(-\infty,0]\subset B_*\neq \emptyset$. Therefore we only consider $\sigma_1>0$ and $\sigma_1\in B_*$.

(1). We prove that, for all $\sigma<\sigma_1$, $\Gamma_{\sigma}(t)\rightarrow\Gamma_*\ \text{in}\ C^1$, as $t\rightarrow\infty$. 

Since $\Gamma_{\sigma_1}(t)\rightarrow\Gamma_*\ \text{in}\ C^1$, as $t\rightarrow\infty$, then there holds $\Gamma^*\succ\Gamma_{\sigma_1}(t)$ for $t$ large enough. By comparison principle, $\Gamma_{\sigma_1}(t)\succ\Gamma_{\sigma}(t)$, $t<T_{\sigma}$. These imply that the condition (4) in Lemma \ref{lem:sigulartime} can not hold. Therefore, $T_{\sigma}=\infty$. By the same argument in Proposition \ref{prop:sigmasmall}, we can prove 
$$
\Gamma_{\sigma}(t)\rightarrow\Gamma_*\ \text{in}\ C^1,
$$
as $t\rightarrow\infty$. Here we prove that $B_*$ is connect. 

(2). We are going to prove $B_*$ is open. We only need prove that there exists $\epsilon_0>0$, $(\sigma_1,\sigma_1+\epsilon_0)\subset B_*$. We divide this proof into two steps.

{\bf Step 1.} Let 
$$
\tau_0(\sigma)=\max\{\tau \mid\frac{d}{ds}F_{\sigma}(0,t)\cdot(0,1)>0, 0<t<\tau\},\ \text{for}\  \sigma>0. 
$$ 
By comparison principle, we can prove that $\tau_0(\sigma)$ is non-increasing with respect to $\sigma$. let $\tau^*=\sup\{\tau_0(\sigma)\mid\sigma>\sigma_1\}=\lim\limits_{\sigma\downarrow\sigma^1}\tau_0(\sigma)$. We claim that $\tau^*=\infty$. 

For all $t<\tau^*$, there exists $\delta_0$, for $\sigma\in(\sigma_1,\sigma_1+\delta_0)$, $\tau_0(\sigma)>t$. Therefore, for $\sigma\in(\sigma_1,\sigma_1+\delta_0)$, $\Gamma_{\sigma}(t)$ can be represented by polar coordinate and not become singular. By Lemma \ref{lem:localconti},
$$
\Gamma_{\sigma}(t)\rightarrow\Gamma_{\sigma_1}(t),
$$ 
as $\sigma\rightarrow\sigma_1$. Seeing that the condition (1) or (2) in Lemma \ref{lem:sigulartime} hold for $\Gamma_{\sigma_1}(t)$, we can prove that there exists $\delta>0$ such that 
$$
\frac{d}{ds}F_{\sigma_1}(0,t)\cdot(0,1)>\delta,\ t<\infty.
$$
Consequently,
$$
\lim\limits_{\sigma\downarrow\sigma^1}\frac{d}{ds}F_{\sigma}(0,t)\cdot(0,1)\geq\delta,\ t<\tau^*.
$$
Therefore, $\tau^*=\infty$.

{\bf Step 2.} We complete the proof.

We choose two curves $\gamma_1$ and $\gamma_2$ such that $\Gamma^*\succ\gamma_1\succ\Gamma_*\succ\gamma_2$ and the domain $V$ be the shuttle neighbourhood of $\Gamma_*$ satisfying $\partial V=\gamma_1\cap\gamma_2$. 

Since
$$
\Gamma_{\sigma_1}(t)\rightarrow\Gamma_*\ \text{in}\ C^1,
$$
as $t\rightarrow\infty$, for $t_0$ large enough $\Gamma_{\sigma_1}(t_0)\subset V$. Seeing the result in Step 1, for $\sigma$ close to $\sigma_1$, $\Gamma_{\sigma}(t_0)$ can be represented by polar coordinate and not become singular. By Lemma \ref{lem:localconti},
$$
\Gamma_{\sigma}(t_0)\rightarrow\Gamma_{\sigma_1}(t_0),\ \text{in}\ C^1,
$$ 
as $\sigma\rightarrow\sigma_1$. Then there exists $\epsilon_0$ for $\sigma\in(\sigma_1,\sigma_1+\epsilon_0)$, $\Gamma_{\sigma}(t_0)\subset V$. Using the Lyapunov function given by Lemma \ref{lem:ly} and the same argument in Proposition \ref{pro:asymup}, for all $\sigma\in(\sigma_1,\sigma_1+\epsilon_0)$,
$$
\Gamma_{\sigma}(t)\rightarrow\Gamma_*\ \text{in}\ C^1,
$$
as $t\rightarrow\infty$.

We complete the proof.
\end{proof}

\begin{lem}\label{lem:sigmalargec}
The set 
$$
B^*=\{\sigma\in \mathbb{R}\mid \text{there\ exists}\ T_{\sigma}^*>0\ \text{such\ that}\ \Gamma_{\sigma}(t)\succ\Gamma^*,\ T_{\sigma}^*<t<T_{\sigma}\}
$$
is open and connect.
\end{lem}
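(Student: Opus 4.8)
The plan is to follow the scheme of the proof of Lemma~\ref{lem:sigmasmallc}, the key structural fact being that $\Gamma^*$ is a stationary solution of (\ref{eq:meancur}) and hence is at once a sub- and a super-solution of (\ref{eq:meancurpara})--(\ref{eq:fixbound}).

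For connectedness I would show that $B^*$ is closed upward, i.e. that $\sigma_1\in B^*$ and $\sigma>\sigma_1$ force $\sigma\in B^*$. Since $\varphi$ is even, concave, and vanishes at $\pm a$, one has $\varphi\ge 0$ on $[-a,a]$, so $\Gamma_\sigma\succeq\Gamma_{\sigma_1}$ for $\sigma>\sigma_1$, and the comparison principle gives $\Gamma_\sigma(t)\succ\Gamma_{\sigma_1}(t)$ for $0<t<\min\{T_\sigma,T_{\sigma_1}\}$. Suppose now $\sigma\notin B^*$; then by Lemma~\ref{lem:sigulartime} one of its conditions (1)--(3) holds for $\Gamma_\sigma$, whence $T_\sigma=\infty$ and $\Gamma_\sigma(t)\succ\Gamma_{\sigma_1}(t)$ for all $0<t<T_{\sigma_1}$. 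Choosing $t_0\in(T_{\sigma_1}^*,T_{\sigma_1})$ gives $\Gamma_\sigma(t_0)\succ\Gamma_{\sigma_1}(t_0)\succ\Gamma^*$, and comparing $\Gamma_\sigma$ with the stationary solution $\Gamma^*$ on $[t_0,\infty)$ yields $\Gamma_\sigma(t)\succ\Gamma^*$, i.e. $SGN(\Gamma_\sigma(t),\Gamma^*)=[+]$, for every $t\ge t_0$; this contradicts each of conditions (1)--(3), in which $SGN(\Gamma_\sigma(t),\Gamma^*)$ eventually equals $[-]$ or equals $[-\ +\ -]$ for all $t$. Hence $\sigma\in B^*$. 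Thus $B^*$ is closed upward; it is nonempty by Proposition~\ref{prop:sigmalarge}, and it omits $(-\infty,0]$ (for $\sigma\le 0$ one has $\Gamma^*\succ\Gamma_\sigma(t)$ for all $t$), so $B^*$ is an interval of the form $(\sigma^*,\infty)$ or $[\sigma^*,\infty)$ and in particular connected.

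For openness, note that since $B^*$ is closed upward it already contains $(\sigma_1,\infty)$ for every $\sigma_1\in B^*$, so it is enough to find $\epsilon_0>0$ with $(\sigma_1-\epsilon_0,\sigma_1)\subset B^*$. Recall $\tau_0(\sigma)=\sup\{\tau\mid\frac{d}{ds}F_\sigma(0,t)\cdot(0,1)>0,\ 0<t<\tau\}$; by the comparison argument in Lemma~\ref{lem:sigmasmallc} it is non-increasing in $\sigma$, and by Lemma~\ref{lem:regular} one has $\tau_0(\sigma)\le T_\sigma$. When $\sigma_1\in B^*$, condition (4) of Lemma~\ref{lem:sigulartime} holds with $T_{\sigma_1}^*<\tau_0(\sigma_1)$, so $(T_{\sigma_1}^*,\tau_0(\sigma_1))$ is a nonempty subinterval of $(T_{\sigma_1}^*,T_{\sigma_1})$; fix $t_1$ in it, so that $\Gamma_{\sigma_1}(t_1)\succ\Gamma^*$ while $\Gamma_{\sigma_1}(t)$ admits a polar representation $\rho_{\sigma_1}(\cdot,t)$ and stays regular on $[0,t_1]$. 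For $\sigma<\sigma_1$ we have $\tau_0(\sigma)\ge\tau_0(\sigma_1)>t_1$, so $\Gamma_\sigma$ is likewise polar-representable on $[0,t_1]$ and, by Lemma~\ref{lem:regular}, $t_1<T_\sigma$. Since $\rho_\sigma(\cdot,0)\to\rho_{\sigma_1}(\cdot,0)$ in $C^1[0,\pi]$ as $\sigma\to\sigma_1$, Lemma~\ref{lem:localconti} gives $\rho_\sigma(\cdot,t_1)\to\rho_{\sigma_1}(\cdot,t_1)$ in $C^2[0,\pi]$, that is, $\Gamma_\sigma(t_1)\to\Gamma_{\sigma_1}(t_1)$ in $C^1$. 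Because $\Gamma_{\sigma_1}(t_1)\succ\Gamma^*$ with the two curves meeting only at $P,Q$ and transversally there, the strict order $\succ$ persists under small $C^1$ perturbations (this is immediate from Definition~\ref{def:semiorder}), so there is $\epsilon_0>0$ with $\Gamma_\sigma(t_1)\succ\Gamma^*$ for $\sigma\in(\sigma_1-\epsilon_0,\sigma_1)$. Comparing $\Gamma_\sigma$ with the stationary $\Gamma^*$ on $[t_1,T_\sigma)$ then gives $\Gamma_\sigma(t)\succ\Gamma^*$ for $t_1\le t<T_\sigma$, i.e. $\sigma\in B^*$. Together with $\{\sigma_1\}\cup(\sigma_1,\infty)\subset B^*$ this gives $(\sigma_1-\epsilon_0,\infty)\subset B^*$, so $\sigma_1$ is interior to $B^*$; as $\sigma_1\in B^*$ was arbitrary, $B^*$ is open.

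The only point that needs care is that Lemma~\ref{lem:localconti} is stated for the polar representation $\rho_\sigma$, so one must keep the comparison time $t_1$ below the regularity threshold $\tau_0(\sigma_1)$ and then invoke monotonicity of $\tau_0$ to carry this over to nearby $\sigma$; everything after $\Gamma_\sigma(t_1)\succ\Gamma^*$ is automatic because $\Gamma^*$ is stationary and the comparison principle propagates the order forward, and the stability of $\succ$ under $C^1$-convergence is elementary given the transversality in Definition~\ref{def:semiorder}.
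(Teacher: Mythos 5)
Your proposal is correct and follows essentially the same route as the paper: upward closedness via the comparison principle together with the trichotomy of Lemma \ref{lem:sigulartime} (the paper splits directly into $T_\sigma<\infty$, where condition (4) is forced, and $T_\sigma=\infty$, where comparison gives $\Gamma_\sigma(t)\succ\Gamma_{\sigma_2}(t)\succ\Gamma^*$, rather than arguing by contradiction, but the content is identical), and openness below $\sigma_1$ by choosing a time $t_0$ with $\Gamma_{\sigma_1}(t_0)\succ\Gamma^*$ inside the polar-representability window, invoking Lemma \ref{lem:localconti} for continuous dependence, and then comparing with the stationary $\Gamma^*$. Your explicit use of the monotonicity of $\tau_0(\sigma)$ just spells out what the paper leaves implicit in the phrase ``for $\sigma$ close to $\sigma_2$, $\Gamma_\sigma(t)$ can be represented by polar coordinate for $0<t\le t_0$.''
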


\begin{proof}
Proposition \ref{prop:sigmalarge} and \ref{prop:sigmasmall} show that $B^*\subset (0,\infty)$ is not empty. We consider $\sigma_2>0$ and $\sigma_2\in B^*$. Then there exists $T_{\sigma_2}^*>0$ such that
$$
\Gamma_{\sigma_2}(t)\succ\Gamma^*,\ T_{\sigma_2}^*<t<T_{\sigma_2}.
$$ 

(1). We prove $(\sigma_2,\infty)\subset B^*$.

For $\sigma>\sigma_2$, if $T_{\sigma}<\infty$, then only the condition (4) in Lemma \ref{lem:sigulartime} can hold. The result is true. If $T_{\sigma}=\infty$, then by comparison principle, 
$$
\Gamma_{\sigma}(t)\succ\Gamma_{\sigma_2}(t)\succ\Gamma^*,\ T_{\sigma_2}^*<t<T_{\sigma_2}.
$$
Here we complete the proof that $B^*$ is connect.

(2). We prove $B^*$ is open. We only need prove that there exists $\epsilon_0>0$ such that $(\sigma_2-\epsilon_0,\sigma_2)\subset B^*$.

We can choose $t_0$ such that $\Gamma_{\sigma_2}(t_0)\succ \Gamma^*$ and 
$$
\frac{d}{ds}F_{\sigma_2}(0,t)\cdot(0,1)>0,\ 0<t\leq t_0.
$$
By Lemma \ref{lem:regular} and comparison principle, it is easy to see that for all $0<\sigma<\sigma_2$, $T_{\sigma}>t_0$. For $\sigma$ close to $\sigma_2$, $\Gamma_{\sigma}(t)$ can be represented by polar coordinate for $0<t\leq t_0$. By Lemma \ref{lem:localconti},
$$
\Gamma_{\sigma}(t_0)\rightarrow\Gamma_{\sigma_2}(t_0),\ \text{in}\ C^1
$$
as $\sigma\rightarrow\sigma_2$. Therefore, there exists $\epsilon_0>0$ such that for all $\sigma\in(\sigma_2-\epsilon_0,\sigma_2)$, $\Gamma_{\sigma}(t_0)\succ\Gamma^*$. By comparison principle, we can get the result easily.

We complete the proof.
\end{proof}

\begin{cor}\label{cor:connect}
There exist $0<\sigma_*\leq \sigma^*$ such that 
$$
B^*=(\sigma^*,\infty)
$$
and 
$$
B_*=(-\infty,\sigma_*).
$$
\end{cor}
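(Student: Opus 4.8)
The plan is to combine the structural facts established in Lemmas~\ref{lem:sigmasmallc} and~\ref{lem:sigmalargec} with the elementary observation that an open connected subset of $\mathbb{R}$ is an open interval, together with the single substantive input that $B_*$ and $B^*$ are disjoint.

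First I would record that $B_*\cap B^*=\emptyset$. Indeed, if some $\sigma$ belonged to both sets, then on the one hand $\Gamma_\sigma(t)\succ\Gamma^*$ for $t$ close to $T_\sigma$, while on the other hand $\Gamma_\sigma(t)\to\Gamma_*$ in $C^1$ as $t\to\infty$ (in particular $T_\sigma=\infty$). Since $\Gamma^*\succ\Gamma_*$ and the relation $\succ$ is an open condition with respect to $C^1$-convergence of curves with the fixed extreme points $P,Q$ (implicit in Definitions~\ref{def:semiorder} and~\ref{def:C1close}), the convergence $\Gamma_\sigma(t)\to\Gamma_*$ forces $\Gamma^*\succ\Gamma_\sigma(t)$ for all large $t$, which contradicts $\Gamma_\sigma(t)\succ\Gamma^*$ by the antisymmetry of $\succ$. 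Hence the two sets are disjoint.

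Next, by Lemma~\ref{lem:sigmasmallc} the set $B_*$ is open and connected, hence an open interval; since $(-\infty,0]\subset B_*$ it has no finite left endpoint, so $B_*=(-\infty,\sigma_*)$ for some $\sigma_*\in(0,+\infty]$, with $\sigma_*>0$ because $B_*$ is open and contains $0$. Proposition~\ref{prop:sigmalarge} gives $B^*\neq\emptyset$, and since $B^*\cap B_*=\emptyset$ we must have $\sigma_*<\infty$; thus $B_*=(-\infty,\sigma_*)$ with $0<\sigma_*<\infty$. Then, by Lemma~\ref{lem:sigmalargec} the set $B^*$ is open, connected and nonempty, hence an open interval, and part (1) of that lemma shows $\sigma_2\in B^*\Rightarrow(\sigma_2,\infty)\subset B^*$, so $B^*$ is unbounded above; therefore $B^*=(\sigma^*,\infty)$ with $\sigma^*:=\inf B^*$. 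Because $B^*\cap B_*=\emptyset$ and $B_*=(-\infty,\sigma_*)$ we get $B^*\subset[\sigma_*,\infty)$, and since $B^*$ is open it cannot contain the endpoint $\sigma_*$ (a neighbourhood of $\sigma_*$ would meet $B_*$), so $\sigma^*\geq\sigma_*>0$. This yields $0<\sigma_*\leq\sigma^*$ and the two claimed identities.

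The only genuinely non-formal point is the disjointness $B_*\cap B^*=\emptyset$; everything after that is bookkeeping about open connected subsets of the line. I do not expect a real obstacle here, since the incompatibility of the two asymptotic behaviors follows immediately from $\Gamma^*\succ\Gamma_*$ and the stability of the order $\succ$ under $C^1$-perturbation.
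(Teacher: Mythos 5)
Your proposal is correct and follows essentially the same route as the paper: use the openness and connectedness from Lemmas~\ref{lem:sigmasmallc} and~\ref{lem:sigmalargec} to identify $B_*$ and $B^*$ as unbounded open intervals, and Proposition~\ref{prop:sigmasmall} (equivalently, $(-\infty,0]\subset B_*$ plus openness) for $\sigma_*>0$. The only difference is that you spell out the disjointness $B_*\cap B^*=\emptyset$ underlying $\sigma_*\leq\sigma^*$, which the paper dismisses as ``obvious''; your argument for it is sound.
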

\begin{proof}
Let $\sigma^*=\inf B^*$ and $\sigma_*=\inf B_*$. Obviously, $\sigma_*\leq\sigma^*$.

Lemma \ref{lem:sigmasmallc} and \ref{lem:sigmalargec} imply that
$$
B^*=(\sigma^*,\infty)
$$
and 
$$
B_*=(-\infty,\sigma_*).
$$
Proposition \ref{prop:sigmasmall} shows that $\sigma_*>0$. The proof is completed.
\end{proof}

\begin{prop}\label{pro:barrierfunction}
If $\Gamma_{\sigma_0}(t)\rightarrow\Gamma^*$ in $C^1$, for some $\sigma_0$, as $t\rightarrow\infty$, then $(\sigma_0,\infty)\subset B^*$.
\end{prop}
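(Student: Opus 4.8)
The plan is to combine the comparison principle with the classification of Lemma~\ref{lem:sigulartime}: for $\sigma>\sigma_0$ I will show that $\Gamma_\sigma(t)$ must eventually overtake $\Gamma^*$, the only alternative being $\Gamma_\sigma(t)\to\Gamma^*$, and this alternative will be ruled out by the linearized instability of the equilibrium $\Gamma^*$. First, two reductions. Since $\Gamma_{\sigma_0}(t)\to\Gamma^*$ while every $\sigma<\sigma_*$ has $\Gamma_\sigma(t)\to\Gamma_*\ (\neq\Gamma^*)$ by Proposition~\ref{prop:sigmasmall} and Corollary~\ref{cor:connect}, necessarily $\sigma_0\ge\sigma_*>0$. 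Fix $\sigma>\sigma_0$. As $\varphi$ is concave with $\varphi(\pm a)=0$ we have $\varphi>0$ on $(-a,a)$, so $\Gamma_\sigma(0)\succ\Gamma_{\sigma_0}(0)$, and by Theorem~\ref{thm:comparison} $\Gamma_\sigma(t)\succ\Gamma_{\sigma_0}(t)$ for all $t\ge0$. In particular $\Gamma_\sigma(t)\succ\Lambda_0$ throughout.

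Suppose, for contradiction, that $\sigma\notin B^*$. Then $\Gamma_\sigma$ cannot fall under condition~(4) of Lemma~\ref{lem:sigulartime}, so it falls under (1), (2) or (3); in each of these $T_\sigma=\infty$ and $\Gamma_\sigma(t)$ stays in a bounded region (trapped below $\Gamma^*$ in cases (1)--(2), bounded by Lemma~\ref{lem:rhob} in case (3)), and since $\Gamma_\sigma(t)\succ\Lambda_0$, Lemma~\ref{lem:regular} makes the flow regular and representable in polar coordinates for large $t$. The Lyapunov argument of Propositions~\ref{prop:sigmasmall}--\ref{pro:asymup} (via Lemma~\ref{lem:ly}) then forces $\Gamma_\sigma(t)$ to converge in $C^1$ to a stationary solution, necessarily $\Gamma^*$ or $\Gamma_*$. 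If the limit is $\Gamma_*$, then $\sigma\in B_*=(-\infty,\sigma_*)$, contradicting $\sigma>\sigma_0\ge\sigma_*$. So $\Gamma_\sigma(t)\to\Gamma^*$ in $C^1$; combined with $\Gamma_{\sigma_0}(t)\to\Gamma^*$ and $\Gamma_\sigma(t)\succ\Gamma_{\sigma_0}(t)$ for every $t$, it remains to derive a contradiction from having two flows converging to $\Gamma^*$, one strictly above the other.

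This is the crux of the argument, and it uses that $\Gamma^*$ is \emph{linearly unstable}. Being the major arc through $(\pm a,0)$ of a circle of radius $1/A$, it has length $L^*=\big(2\pi-2\arcsin(aA)\big)/A>\pi/A$ (for $a<1/A$), so the linearization of $V=-\kappa+A$ at $\Gamma^*$ in the normal-graph variable is $w_t=w_{ss}+A^2w$ on $[0,L^*]$ with Dirichlet conditions, whose first eigenvalue $\lambda_1=A^2-\pi^2/(L^*)^2$ is strictly positive. For large $t$ both $\Gamma_\sigma(t)$ and $\Gamma_{\sigma_0}(t)$ are $C^1$-close to $\Gamma^*$, hence — bootstrapping the smooth flow with the uniform bounds of Lemma~\ref{lem:rhob} — $C^k$-close for every $k$; writing them as normal graphs $w_\sigma(\cdot,t),w_{\sigma_0}(\cdot,t)$ over $\Gamma^*$, one has $w_\sigma>w_{\sigma_0}$ on $(0,L^*)$ (strong comparison), $w_\sigma,w_{\sigma_0}\to0$ in $C^k$, and $v:=w_\sigma-w_{\sigma_0}>0$ solves a linear parabolic equation $v_t=a\,v_{ss}+b\,v_s+c\,v$, $v(0,t)=v(L^*,t)=0$, with $\|a-1\|_{C^2[0,L^*]}+\|b\|_{C^2[0,L^*]}+\|c-A^2\|_{C^2[0,L^*]}\to0$. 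Testing against the first eigenfunction $\phi_1(s)=\sin(\pi s/L^*)$ of $\partial_{ss}+A^2$ and integrating by parts (all boundary terms vanishing), the smallness of the coefficient perturbations gives, for $t\ge t_0$ with $t_0$ large,
$$\frac{d}{dt}\int_0^{L^*}v(s,t)\,\phi_1(s)\,ds\ \ge\ \tfrac12\,\lambda_1\int_0^{L^*}v(s,t)\,\phi_1(s)\,ds ,$$
so $\int_0^{L^*}v(\cdot,t)\phi_1\,ds\to\infty$, while $v=w_\sigma-w_{\sigma_0}\to0$ uniformly — a contradiction. Hence $\sigma\in B^*$, and since $\sigma>\sigma_0$ was arbitrary, $(\sigma_0,\infty)\subset B^*$. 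The main obstacle is exactly this last paragraph: one must justify the linearization (the uniform higher-order estimates, from bounded geometry and parabolic smoothing once $C^1$-convergence is known) and control the error terms in the eigenfunction computation near the endpoints of $\Gamma^*$, where $\phi_1$ and $v$ degenerate — the $C^2$-in-$s$ smallness of $a-1,b,c-A^2$ is what lets every error term be absorbed into $\tfrac12\lambda_1\int v\phi_1$; alternatively one bounds $v$ from below by $\delta\,e^{(\lambda_1/4)(t-t_0)}\psi$, with $\psi>0$ the first Dirichlet eigenfunction of $\partial_{ss}+A^2$ on a slightly shrunk interval $[\rho,L^*-\rho]$ extended by $0$, which is a subsolution of the $v$-equation for $t_0$ large and lies below $v$ at $t_0$ once $\delta$ is small.
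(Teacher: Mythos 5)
Your proposal reaches the correct conclusion and its overall skeleton (reduce to the case $T_\sigma=\infty$, use comparison to order $\Gamma_\sigma(t)\succ\Gamma_{\sigma_0}(t)$, rule out the limits $\Gamma_*$ and $\Gamma^*$) matches the paper's, but the key step — excluding two ordered flows both converging to $\Gamma^*$ — is done by a genuinely different mechanism. The paper never linearizes: it exploits translation invariance of $V=-\kappa+A$ in the $y$-direction, building an explicit moving sub-solution $\Gamma^c(t)$ by shifting $\Gamma_{\sigma_0}(t)$ up by a small $c>0$ and reconnecting to $(\pm a,0)$ with vertical segments; the corner points are legitimate in the sense of Definition \ref{def:subsup} because, for large $t$, the tangents of $\Gamma_{\sigma_0}(t)$ at the endpoints satisfy $\frac{d}{ds}F_{\sigma_0}(0,t)\cdot(0,1)\geq\delta$ and $\frac{d}{ds}F_{\sigma_0}(0,t)\cdot(1,0)\leq-\delta$ (inherited from $\Gamma^*$ being the major arc). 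Then $\Gamma_\sigma(t)\succ\Gamma^c(t)\rightarrow\Gamma^{*c}$, and neither $\Gamma^*\succeq\Gamma^{*c}$ nor $\Gamma_*\succeq\Gamma^{*c}$ can hold, so only condition (4) of Lemma \ref{lem:sigulartime} survives. You instead invoke the linearized instability of $\Gamma^*$ (first Dirichlet eigenvalue $A^2-\pi^2/(L^*)^2>0$ since $L^*>\pi/A$) and run an eigenfunction-growth or exponential-subsolution argument on the positive difference $v=w_\sigma-w_{\sigma_0}$. Both arguments ultimately rest on the same geometric fact — $\Gamma^*$ is more than a semicircle — but the paper's version needs only the $C^1$ convergence of $\Gamma_{\sigma_0}(t)$ and no coefficient estimates, whereas yours requires $C^2$ (or better) convergence of both flows to justify the linearization and the endpoint/error control you correctly flag as the delicate point; in exchange, your route is a general order-preserving-semiflow principle that makes explicit \emph{why} $\Gamma^*$ is the threshold equilibrium. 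One caveat applies to both: at $a=1/A$ your eigenvalue degenerates ($\lambda_1=0$) and the paper's condition $\frac{d}{ds}F_{\sigma_0}(0,t)\cdot(1,0)\leq-\delta$ also fails, so neither proof covers that borderline case as written; since the paper allows $a\leq 1/A$, it would be worth noting this restriction explicitly.
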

\begin{proof}
For $\sigma>\sigma_0$, if $T_{\sigma}<\infty$, then the result is true. In the following proof, we only consider $T_{\sigma}=\infty$. By comparison principle, $\Gamma_{\sigma}(t)\succ\Gamma_{\sigma_0}(t)$, $t>0$.

Since 
$$
\Gamma_{\sigma_0}(t)\rightarrow\Gamma^*\ \text{in}\ C^1,
$$
there exist $t_0$ and $\delta>0$ such that for all $t\geq t_0$, there hold 
\begin{equation}\label{eq:ylarz}
\frac{d}{ds}F_{\sigma_0}(0,t)\cdot(0,1)\geq \delta
\end{equation}
and
\begin{equation}\label{eq:xlesz}
\frac{d}{ds}F_{\sigma_0}(0,t)\cdot(1,0)\leq -\delta.
\end{equation}
Since $\Gamma_{\sigma}(t_0)\succ\Gamma_{\sigma_0}(t_0)$, then we can choose a small positive constant $c$ such that 
$$
\Gamma_{\sigma}(t_0)\succ\Gamma^{c}(t_0),
$$
where
$$
\Gamma^{c}(t)=\{(x,y+c)\mid (x,y)\in\Gamma_{\sigma_0}(t)\}\cup \{(-a,y)\mid 0\leq y\leq c\}\cup \{(a,y)\mid 0\leq y\leq c\},\ t>0.
$$
\begin{figure}[htbp]
	\begin{center}
            \includegraphics[height=6.0cm]{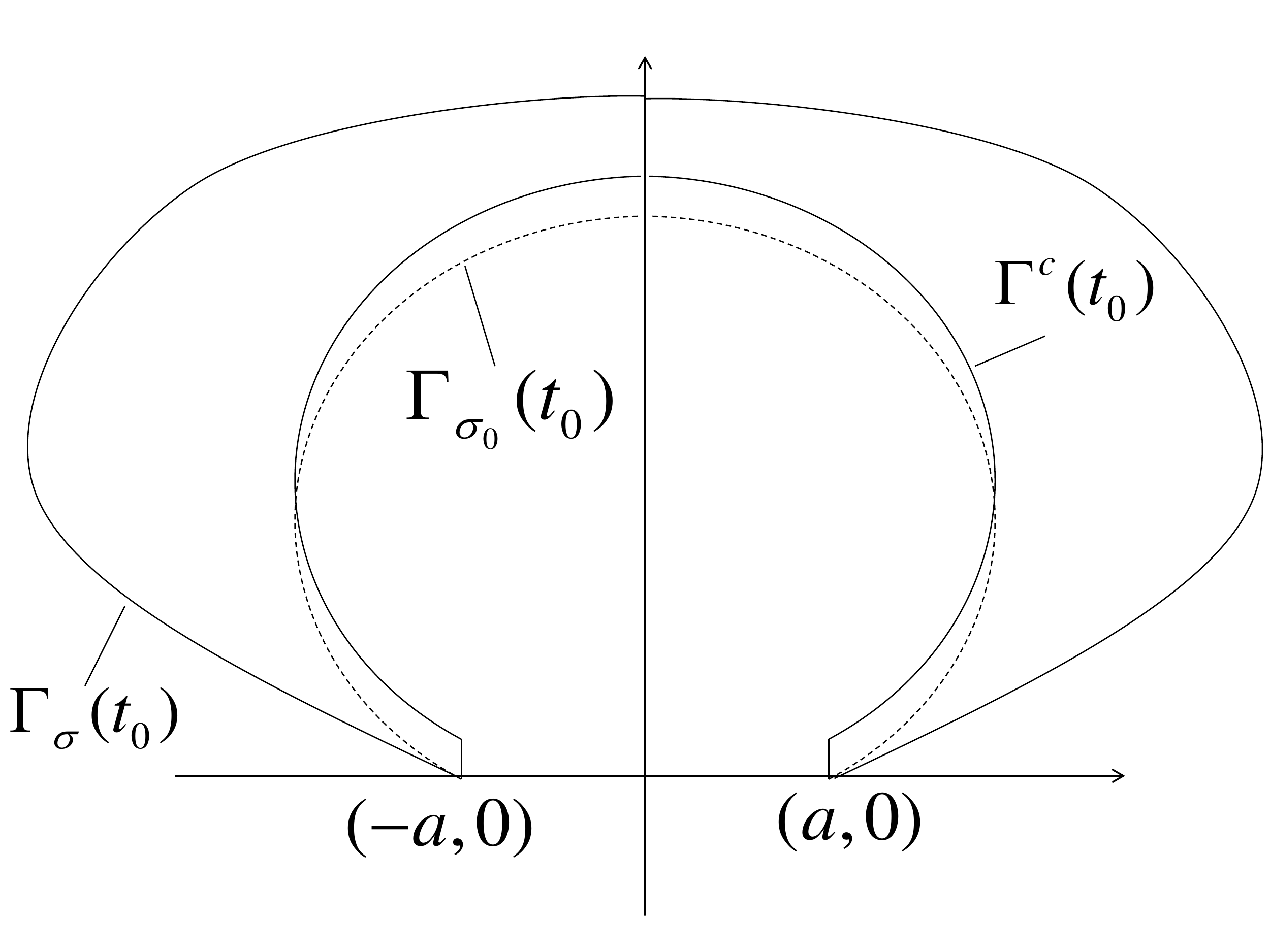}
		\vskip 0pt
		\caption{Construction of $\Gamma_c(t)$}
        \label{fig:barrier}
	\end{center}
\end{figure}

We claim that $\Gamma^c(t)$ is a sub-solution for $t\geq t_0$. Indeed, the part $\Gamma^c(t)\cap\{y> c\}$ is a translation of $\Gamma_{\sigma_0}(t)$. $\Gamma^c(t)\cap\{y> c\}$ satisfies (\ref{eq:meancurpara}). Since the part $\Gamma^c(t)\cap\{y< c\}$ consists of two straight lines, then the part is a sub-solution of (\ref{eq:meancurpara}). Next at the points $\{(-a,c),(a,c)\}=\Gamma^c(t)\cap\{y=c\}$, seeing (\ref{eq:ylarz}), (\ref{eq:xlesz}) for any smooth flow $S(t)$ can not touch $(-a,c)$ or $(a,c)$ above only once. Therefore $\Gamma^c(t)$ is a sub-solution of (\ref{eq:meancurpara}) and (\ref{eq:fixbound}) in the sense of Definition \ref{def:subsup}. 

By $\Gamma_{\sigma}(t_0)\succ\Gamma^c(t_0)$ and $\Gamma^c(t)$ being sub-solution for $t>t_0$, 
\begin{equation}\label{eq:largesub}
\Gamma_{\sigma}(t)\succ\Gamma^c(t),\ t>t_0. 
\end{equation}
Noting that $\Gamma^c(t)\rightarrow \Gamma^{*c}$ in $C$, as $t\rightarrow\infty$, where
$$
\Gamma^{*c}=\{(x,y+c)\mid (x,y)\in\Gamma^*\}\cup \{(-a,y)\mid 0\leq y\leq c\}\cup \{(a,y)\mid 0\leq y\leq c\}.
$$
If $\Gamma_{\sigma}(t)$ satisfies the condition (1) or (2) or (3) in Lemma \ref{lem:sigulartime}, seeing the proof of Proposition \ref{pro:asymup}, $\Gamma_{\sigma}(t)\rightarrow \Gamma^*$ or $\Gamma_{\sigma}(t)\rightarrow \Gamma_*$, as $t\rightarrow\infty$. 

Combining (\ref{eq:largesub}), $\Gamma^*\succeq \Gamma^{*c}$ or $\Gamma_*\succeq \Gamma^{*c}$. But all of these conditions are impossible. Therefore, only the condition (4) in Lemma \ref{lem:sigulartime} holds.

The proof is completed.
\end{proof}

\begin{proof}[Proof of Theorem \ref{thm:category}]
Let $\sigma_*$ and $\sigma^*$ be given by Corollary \ref{cor:connect}. 

Seeing the definition of $\sigma_*$, $\sigma_*\notin B^*$ and $\sigma_*\notin B_*$. Therefore, $\Gamma_{\sigma_*}(t)$ only satisfies the condition (3) in Lemma \ref{lem:sigulartime}. The result in Section 5 shows that $\Gamma_{\sigma_*}(t)\rightarrow \Gamma^*$, as $t\rightarrow\infty$.

By Proposition \ref{pro:barrierfunction}, $(\sigma_*,\infty)=B^*$. Consequently, $\sigma_*=\sigma^*$. 

The proof of Theorem \ref{thm:category} is completed.
\end{proof}
{\bf Acknowledge.}
The author is grateful to Professor Matano Hiroshi for his inspiring suggestion about Grim reaper argument. He is also grateful to Professor Giga Yoshikazu for letting me know several related useful papers.

{\bf Conflict of interest.} 
We declare that we do not have any commercial or associative interest that represents a conflict of interest in connection with the work submitted.

\end{document}